\date{}
\definecolor{sah}{rgb}{0.66,0.33, 0.04}
\definecolor{adel4}{cmyk}{1,0,0,0}
\definecolor{adel3}{rgb}{0.66,0.33, 0.04}
\definecolor{adel1}{cmyk}{0,0.20,1,0}
\definecolor{adel2}{cmyk}{0,0.40,1,0.30}
\definecolor{adel0}{rgb}{0.99,0.60, 0.30}
\definecolor{trut}{rgb}{0.99,0.80, 0.00}
\definecolor{trus}{rgb}{0.00, 0.50, 0.00}
 \definecolor{trust}{rgb}{0.99, 0.99, 0.80}
\definecolor{MaCouleur}{rgb}{0,0.9,0.3}
\newcommand{\CC}{\mathbb{C}}
\newcommand{\NN}{\mathbb{N}}
\newcommand{\RR}{\mathbb{R}}
\theoremstyle{plain}
\newtheorem{theorem}{Theorem}
\newtheorem{proposition}{Proposition}
\newtheorem{lemma}{Lemma}
\newtheorem*{teorem}{Main theorem}
\newtheorem{remark}{Remark}
\def\virgp{\raise 2pt\hbox{,}}
\def\Xint#1{\mathchoice
 {\XXint\displaystyle\textstyle{#1}}%
 {\XXint\textstyle\scriptstyle{#1}}%
 {\XXint\scriptstyle\scriptscriptstyle{#1}}%
 {\XXint\scriptscriptstyle\scriptscriptstyle{#1}}%
 \!\int}
\def\XXint#1#2#3{{\setbox0=\hbox{$#1{#2#3}{\int}$}
 \vcenter{\hbox{$#2#3$}}\kern-.5\wd0}}
\def\av_#1{\Xint-_{#1}}
\title[]{Degenerate bifurcation of the rotating patches}
 \author[T. Hmidi]{Taoufik Hmidi}
\address{IRMAR, Universit\'e de Rennes 1\\ Campus de
Beaulieu\\ 35~042 Rennes cedex\\ France}
\email{thmidi@univ-rennes1.fr}
\author[J. Mateu]{Joan Mateu}
\address{Departament de Matem\`{a}tiques\\
Universitat Aut\`{o}noma de Barcelona\\
08193 Bellaterra, Barcelona, Catalonia} \email{ mateu@mat.uab.cat}
\begin{document}

\maketitle


\begin{abstract}{
In this paper we study  the existence of doubly-connected rotating patches  for Euler equations when the classical non-degeneracy conditions are not satisfied.  We prove   the bifurcation of the  V-states with two-fold symmetry, however for higher  $m-$fold symmetry with $m\geq3$ the bifurcation does not occur. This answers to a problem left open in \cite{H-F-M-V}. Note that, contrary to the known results  for  simply-connected and  doubly-connected cases where the bifurcation is pitchfork, we show that the  degenerate bifurcation is actually  transcritical.  These results are in agreement with the numerical observations recently discussed  in \cite{H-F-M-V}. The proofs stem from the local  structure of the quadratic form associated to the reduced bifurcation equation. } \end{abstract}
\tableofcontents

\section{Introduction}

 In this paper we deal with  the vortex motion for incompressible  Euler equations in two-dimensional space. The formulation velocity-vorticity is given by the nonlinear transport equation \begin{equation}\label{vorticity}
\left\{ \begin{array}{ll}
\partial_{t}\omega +v\cdot \nabla\, \omega=0,\,\, x\in \RR^2,\,\, t\geq0, \\
 v=\nabla^\perp\Delta^{-1}\omega,\\
\omega_{|t=0} =\omega_{0},
\end{array} \right.
\end{equation}
where  $v=(v^1,v^2)$ denotes  the  velocity field and $\omega=\partial_1 v^2-\partial_2 v^1$ its vorticity. The second equation in \eqref{vorticity} is the Biot-Savart law which can be written with a singular operator as follows:  By identifying  the vector $v=(v_1,v_2)$ with the complex function $v_1+iv_2$,  we may  write
\begin{equation}\label{Biot}
v(t,z)=\frac{i}{2\pi} \int_\CC
\frac{\omega(t,\xi)}{\overline{z}-\overline{\xi}}\,dA
(\xi),\quad z \in \CC,
\end{equation}
with $dA$ being the planar Lebesgue measure. Global existence of  classical solutions was established   a long time ago by Wolibner in \cite{Wo}  and  follows from the transport structure of the vorticity equation. For a recent account of the theory  we refer the reader to 
\cite{BM, Ch}. The same result was extended for less regular initial data by 
Yudovich in \cite{Y1} who proved  that the system \eqref{vorticity} admits   a unique global
solution in the weak sense when the initial vorticity $\omega_0$
is bounded an integrable.  This result is of great importance because it  enables  to deal rigorously with some discontinuous vortices as the  vortex patches which are the characteristic function of bounded domains. Therefore,  when $\omega_0={\chi}_{D}$ with $D$ a bounded domain then the solution of \eqref{vorticity} preserves this structure  for long  time  and $\omega(t)={\chi}_{D_t},$ where $D_t=\psi(t,D)$ being
the image  of $D$ by the flow. The motion of the patch is governed by the contour dynamics equation which takes the following form : Let $\gamma_t :\mathbb{T}\to \partial D_t$ be the Lagrangian  parametrization of the boundary, then
\begin{equation*}
\partial_t\gamma_t=-\frac{1}{2\pi}\int_{\partial D_t}\log|\gamma_t-\xi|d\xi.
\end{equation*}
Notice that according to Chemin's result \cite{Ch}, when the initial boundary is of H\"{o}lder class $C^{1+\varepsilon}$ with $0<\varepsilon<1,$ then there is no formation of singularities in finite time and  the  regularity is therefore  propagated for long time. In general, it is not an easy task  to tackle the full dynamics of a given vortex patch due the singular nonlinearity governing the motion of its boundary. Nevertheless, we know two explicit examples where the patches  perform a steady rotation about their centers without changing shape. The first one is   Rankine vortices where the boundary is a circle and this provides the only stationary patches according to Fraenkel's result  \cite{Fran}. As to the second example, it   is less obvious and goes back  to Kirchhoff  \cite{Kirc} who proved that  an elliptic patch rotates about its center with the angular velocity $\Omega = ab/(a+b)^2$, where $a$ and $b$ are the semi-axes of the ellipse. For another proof see   \cite[p.304]{BM}.
Many years later, 
several examples  of rotating patches, called also $V$-states, were constructed numerically  by  Deem and Zabusky \cite{DZ}  using contour dynamics algorithm. Note that a rotating patch is merely a patch whose domain $D_t$ rotates uniformly about its center which can be assumed to be the origin. This means that $D_t =e^{it \Omega} D$ and $\Omega\in \RR$ being  the angular velocity.  Later, Burbea provided in \cite{B}  an analytical proof and showed 
the  existence of the 
 V-states with  $m$-fold symmetry for each integer $m \geq 2$. In this countable family the case $m=2$ corresponds to Kirchhoff elliptic vortices.
Burbea's approach consists in using complex analysis tools combined  with the bifurcation theory.  It should be noted that from this point of view  the rotating patches are arranged   in a  collection of countable curves bifurcating from Rankine vortices (trivial solution) at the discrete angular velocities set $\big\{\frac{m-1}{2m}, m\geq 2\big\}.$ The numerical experiments  of the   limiting V-states which are the ends of each branch were accomplished  in  \cite{Over,WOZ} and they  reveal interesting behavior; the boundary develops corners at right angles. Recently,  Verdera and the  authors studied   in \cite{HMV} the  regularity of the V-states close to the disc and proved    that the boundaries  are  in fact  of class   $C^\infty$ and convex. More recently, this result was improved by Castro, C\'ordoba and G\'omez-Serrano  in \cite{Cor22} who showed  the analyticity of the V-states close to the disc. We  mention   that similar research has been conducted  in the past few years  for more singular nonlinear transport equations arising in fluid dynamics  as the surface quasi-geostrophic equations or the   quasi-geostrophic shallow-water equations; see for instance \cite{Cor1,Cor22,Plotka,H-H,H-H-H}.

It should be noted that the bifurcating curves from the discs are pitchfork and always located in the left-hand side of the point of bifurcation $\frac{m-1}{2m}$. This implies that  angular velocities of those  V-states  are actually  contained in the interval $]0,\frac12[.$ Whether or not  other  V-states still survive  outside this range of  $\Omega$ is an interesting problem.  The limiting cases $\Omega\in\{0,\frac12\}$ are  well-understood and it turns out that the discs are   the only solutions for the V-states problem. As we have mentioned before, this was proved in the  stationary case $\Omega=0$  by Fraenkel  in  \cite{Fran}  using the  moving plane method. We  remark that this method is quite  flexible and was adapted by the first author  to $\Omega<0$ but  under a convexity restriction, see \cite{Hm}.
With regard to the second endpoint   $\Omega=\frac12$ it is somehow more easier than $\Omega=0$ and was solved recently  in \cite{Hm} using the maximum principle for harmonic functions.

The study of  the second  bifurcation of rotating patches from Kirchhoff ellipses was first examined by Kamm in \cite{Kam}, who gave numerical evidence of the existence of some branches bifurcating from the ellipses, see also \cite{Saf}.  In \cite{Luz},  Luzzatto-Fegiz and Willimason gave more details about the bifurcation diagram of  the first curves and where  we can also find some nice pictures  of the limiting V-states. Very recently, analytical proofs of the existence part of the  V-states and their regularity were explored in \cite{Cor22,H-M}. Indeed,  the authors showed  that  the bifurcation from the ellipses $\big\{w+Q\overline{w},\, w\in \mathbb{T}\big\}$ parametrized by the associated exterior conformal mapping occurs at the values $Q\in ]0,1[$ such that there exists  $m\geq3$ with
$$
1+Q^m-\frac{1-Q^2}{2} m=0.
$$ 
   The linear and nonlinear stability of  long-lived vortex structures  is an old subject in fluid dynamics and  much research  has been carried out  since Love's work \cite{Love} devoted to Kirchhoff ellipses. The nonlinear stability of the elliptic vortices was explored later in  \cite{Guo,Tang}. As to rotating patches   of m-fold symmetry,  a valuable discussion on the  linear stability   was done  by    Burbea and Landau in  \cite{Landau} using complex analysis approach. However the nonlinear stability  in a small neighborhood of Rankine vortices   was performed by Wan in \cite{Wan} through some  variational arguments. For further numerical  discussions, see \mbox{also \cite{Cerr,DR,Mit}.}
   
Recently, in  \cite{HMV2,H-F-M-V} close attention  was paid  to doubly-connected $V$-states taking the form  $\chi_{D}$ with   $D=D_1\backslash D_2$ and  $D_2\Subset D_1$ being two simply-connected domains. It is apparent from symmetry argument  that an annulus is a stationary  $V$-state, however and up to our knowledge  no other explicit example  is known in the literature. Note that in the paper 
\cite{HMV2} a full characterization of the V-states (with nonzero magnitude in the interior domain) with at least one elliptical interface was performed complementing the results of Flierl and Polvani \cite{Flierl}. As a by-product, the only elliptic doubly-connected V-states are  the annuli. The existence of nonradial doubly-connected V-states was  studied in \cite{H-F-M-V} in the spirit of Burbea's work but with much more involved calculations. Roughly speaking,  starting from  the Rankine patch supported by the annulus  
\begin{equation}\label{ann54}
\mathbb{A}_b\triangleq \{z\in \CC,  b < |z| < 1
\}
\end{equation}
  we  obtain  a collection of  countable bifurcating curves from this annulus at some explicit values of the angular velocities.  To be more  precise, let    $m\geq3$  verifying
\begin{equation}\label{strictsisc}
 1+{b}^{{m}}-\frac{1-{b}^2}{2} {m}<0,
\end{equation}
then  there exists two  curves of  doubly-connected rotating patches with  $m$-fold symmetry bifurcating from the annulus $\mathbb{A}_b$ at  the  angular velocities
\begin{equation}\label{fff34}
\Omega_m^{\pm} = \frac{1-b^2}{4}\pm
\frac{1}{2m}\sqrt{\Delta_m},\quad \Delta_m\triangleq\Big(\frac{m(1-b^2)}{2}-1\Big)^2
-b^{2m}.
\end{equation}
In  \cite{H-F-M-V}, it is  showed that  the condition \eqref{strictsisc} is equivalent to $\Delta_m>0$, in which case the eigenvalues are simple and  therefore the transversality assumption required by Crandall-Rabinowitz theorem (CR theorem for abbreviation) is satisfied. The main purpose of the current paper is to investigate the degenerate  case  corresponding to vanishing discriminant, that is, $\Delta_m=0.$ This problem was left open in \cite{H-F-M-V} because   the transversality assumption is no longer verified  and  CR theorem is therefore inefficient. As a matter of fact, the study of the linearized operator is not enough to understand the local structure of the nonlinear problem and answer to the bifurcation problem.  Before stating our result we need to introduce the following set $\mathbb{S}$ and describe its structure. This set will be carefully studied  in  \mbox{Section \ref{Sec2}}. Let
\begin{eqnarray}\label{SS}
\mathbb{S}&\triangleq&\Big\{(m,b)\in\NN^\star\times ]0,1[, \,\Delta_m=0\Big\}\\
\nonumber&=&\Big\{(2,b), b\in ]0,1[\Big\}\cup \Big\{(m,b_m),m\geq3\Big\},
\end{eqnarray}
where  for given $m\geq3,$ $b_m$ is  the unique solution in $]0,1[$ of the equation
$$
1+{b}^{{m}}-\frac{1-{b}^2}{2} {m}=0.
$$
It is not so hard to  show that the sequence $ (b_m)_{m\geq3}$ is strictly increasing and converges \mbox{to $1.$} Moreover one can easily check that
$$
b_3=\frac12\quad \hbox{and}\quad b_4=\sqrt{\sqrt{2}-1}\approx 0.6435.
$$
Notice that from the numerical experiments done in \cite{H-F-M-V}  we observe the formation of small loops from the branches emanating from  the eigenvalues $\Omega_m^+$ and $\Omega_m^-$ (with $m\geq3$) when these latter are close enough. It turns out that these loops become very small and shrink to a point which is the trivial solution when the distance between the  eigenvalues  goes to  zero. This suggests that in the degenerate case  the bifurcation does not occur.  We shall attempt at clarifying this numerical evidence from theoretical standpoint   by providing   analytical proof based upon the bifurcation theory which however still an efficient tool.  We shall also investigate  the two-fold symmetry where the situation seems to be  completely different from the preceding one and  rotating patches continue to exist.  We can now formulate our main results.
 \begin{teorem}\label{fundamental}
 The following assertions hold true.
 \begin{enumerate}
 \item
Let $b\in ]0,1[\backslash\big\{b_{2m}, m\geq2\big\}$, then there exists a curve of $2$-fold doubly-connected  V-states bifurcating from the annulus $\mathbb{A}_b$ at the angular velocity $\frac{1-b^2}{4}.$ Furthermore, the bifurcation is transcritical. 
\item Let $b=b_m$ for some $m\geq3.$ Then there is no bifurcation of $m-$fold V-states from  the annulus $\mathbb{A}_b$.
\end{enumerate}

\end{teorem}
Before giving some details about the proof some remarks are in order.
\begin{remark}
As it will be discussed in   the proof of the Main theorem  postponed in Section \ref{Sec55} we have not only one curve of bifurcation  in the two-fold case but actually two curves. However, from geometrical point of view they can be related to each other using a symmetry argument and therefore only one branch is significant.
\end{remark}
\begin{remark}
The existence of two-fold V-states for $b\in \big\{b_{2n}, n\geq 2\big\}$ remains unresolved. In this case the kernel is two-dimensional  and the transversality assumption is not satisfied. We believe that more work is needed  in order to explore the bifurcation in this very special  case.
\end{remark}

Now we shall sketch the general ideas of the proof of the Main theorem.  We start with a suitable  formulation of the V-states equations which was  set up  in the preceding works \cite{B, HMV2,H-F-M-V} through the use of the conformal mappings. If we denote by   $\phi_j$  the exterior conformal mapping of the domain $D_j$ then  from Riemann mapping theorem  we get,$$
\phi_j(w)=b_j w+\sum_{n\geq0}\frac{a_{j,n}}{w^n}, \quad \, a_{j,n}\in \RR,\quad \, b_1=1,\, b_2=b.
$$
The coefficients are assumed to be real since we shall look for domains which are symmetric with respect to the real axis.
Therefore the  domain $D$  which is assumed to be smooth, of class $C^1$ at least, rotates steadily with an angular velocity $\Omega$ if and only if 
\begin{equation*}
\forall \, w\in \mathbb{T},\,\,G_{j}(\lambda,\phi_1,\phi_2)(w) \triangleq \textnormal{Im}\bigg\{ \Big((1-\lambda)\overline{\phi_j(w)}+I(\phi_j(w))\Big)\, {w}\, {{\phi_j'(w)}}\bigg\}=0, \,\,j=1,2,
\end{equation*}
with 
$$
\lambda\triangleq1-2\Omega\quad \hbox{and}\quad  I(z)\triangleq\fint_{\mathbb{T}}\frac{\overline{z}-\overline{\phi_1(\xi)}}{z-\phi_1(\xi)}\phi_1^\prime(\xi)d\xi-\fint_{\mathbb{T}}\frac{\overline{z}-\overline{\phi_2(\xi)}}{z-\phi_2(\xi)}\phi_2^\prime(\xi)d\xi.
$$
Here we use the parameter $\lambda$ instead of $\Omega$ as in \cite{HMV2,H-F-M-V} because we found it   more convenient in the final formulae that we shall recall in the next few lines.  
We remind from  \cite{H-F-M-V} the following result : If we denote by $\mathcal{L}_{\lambda,b}$   the linearized operator around  the annulus defined by  $$
\mathcal{L}_{\lambda,b}h\triangleq\frac{d}{dt} G(\lambda,th)_{|t=0},\quad h=(h_1,h_2).
$$
Then it   acts as a matrix Fourier multiplier, namely, 
 for $${
h_j(w)=\sum_{n\geq1} \frac{a_{j,n}}{w^{nm-1}},}
$$ belonging to some function space  that will be precise  later
we have 
\begin{eqnarray*}
\mathcal{L}_{\lambda,b}h=\sum_{n\geq 1}M_{nm}(\lambda)\left( \begin{array}{c}
a_{1,n} \\
a_{2,n}
\end{array} \right)e_{nm},  \quad e_n=\hbox{Im} (\overline{w}^n).
\end{eqnarray*}
The matrix $M_n$ is explicitly  given for $n\geq1$  by the formula,
\begin{eqnarray*}
 M_{n}(\lambda)&=&\begin{pmatrix}
n\lambda -1-n b^2 & b^{n+1} \\
 -b^n& b\big(n\lambda-n+1\big)
\end{pmatrix}.
\end{eqnarray*}
Throughout this paper we shall use the terminology of  "nonlinear eigenvalues" or simply eigenvalues to denote the values of $\lambda$ associated to the singular matrix $M_n$ for some $n$. 
Bearing in mind the definition of $\Delta_n>0$ given in \eqref{fff34} then it is straightforward that when $\Delta_n>0$  the eigenvalues are distinct and they coincide when $\Delta_n$ vanishes. In this latter  case $\lambda=\lambda_n=\frac{1+b^2}{2}$ and as it was stressed  in \cite{H-F-M-V} the transversality assumption is not satisfied and thereby  Crandall-Rabinowitz theorem is no longer useful. In this delicate context  the linearized operator is not sufficient to describe the local behavior of the nonlinear functional and therefore  one should move to higher expansions of the functional. The bifurcation without transversality is often referred in the literature as {\it the degenerate bifurcation.} There are some studies devoted to this case; in  \cite[Section I.16]{Kil} and \cite{Kiel} Kielh\"ofer obtained some results on the bifurcation but with some restrictive structure on the nonlinear functional. His approach consists in giving a complete  ansatz  for the  real eigenvalue $\mu(\lambda)$ which is a small perturbation of zero. The bifurcation is then formulated through the platitude degree of $\mu$.  In our context, it is not clear whether our singular functional satisfies the conditions listed in \cite{Kil}. The strategy that we shall follow is slightly bit different from the preceding  one; instead of looking for the expansion of the  eigenvalue $\mu(\lambda)$ we analyze the expansion at the second order of the reduced bifurcation equation and this will be enough for proving our main result. More precisely, using  Lyapunov-Schmidt reduction we transform the infinite-dimensional problem  into a two-dimensional one. This is done by using some projections intimately connected to the spectral structure  of the linearized operator and the V-states equations  reduces in fact to a finite-dimensional equation called {\it reduced bifurcation equation}. It takes the form,
$$
F_2(\lambda,t)=0, \quad F_2:\RR\times\RR\to\RR.
$$
In the degenerate case where $(m,b)$ belongs to the set $ \mathbb{S}$ defined in \eqref{SS}  the point $(\lambda_m,0)$ is in fact  a critical point for $F_2$ and therefore the resolvability  of   the reduced bifurcation equation should require an expansion of  $F_2$ at least at the second order around  the critical point. The computations are fairly complicated and long, and ultimately we find that  the quadratic form is non-degenerate. Accordingly, we   stop the expansion at this order and decide about the occurrence of the bifurcation using perturbative argument.  To be more precise,  we show the local structure,
$$
F_2(\lambda,t)=a_m (\lambda-\lambda_m)^2+b_m t^2+\big((\lambda-\lambda_m)^2+ t^2\big)\varepsilon(\lambda,t).
$$
with $a_m>0,\,\forall m\geq2$ and $\displaystyle{\lim_{(\lambda,t)\mapsto(\lambda_m, 0)}\varepsilon(\lambda,t)=0}$. Thus the existence of bifurcating curves is related  to the sign of the coefficient $b_m$ which reveals   behavior change  with respect to the frequency mode $m.$ Indeed, when $m\geq3$  we get always $b_m>0$ asserting that $F_2$ is strictly convex and thereby there is no bifurcation.  As to the case $m=2$ we obtain  $b_2<0$ implying that  the  bifurcation equation admits non-trivial solutions parametrized by two different curves emerging from the trivial solution.  By symmetry argument of the V-states equations we show that from geometrical standpoint the two curves describe in fact the same V-states meaning that we have only one interesting bifurcating curve.

To end this introduction we shall briefly discuss some numerical experiments concerning the  V-states and which are done for us by Francisco de la Hoz. In Figure \ref{flambda00} we plot the bifurcation diagram giving the dependence with respect to the angular velocity $\Omega$ of the first coefficients $a_{1,1}$ and $a_{2,1}$ of the conformal mappings $\phi_1$ and $\phi_2.$ 

\begin{figure}[!htb]
\center
\includegraphics[width=0.5\textwidth, clip=true]{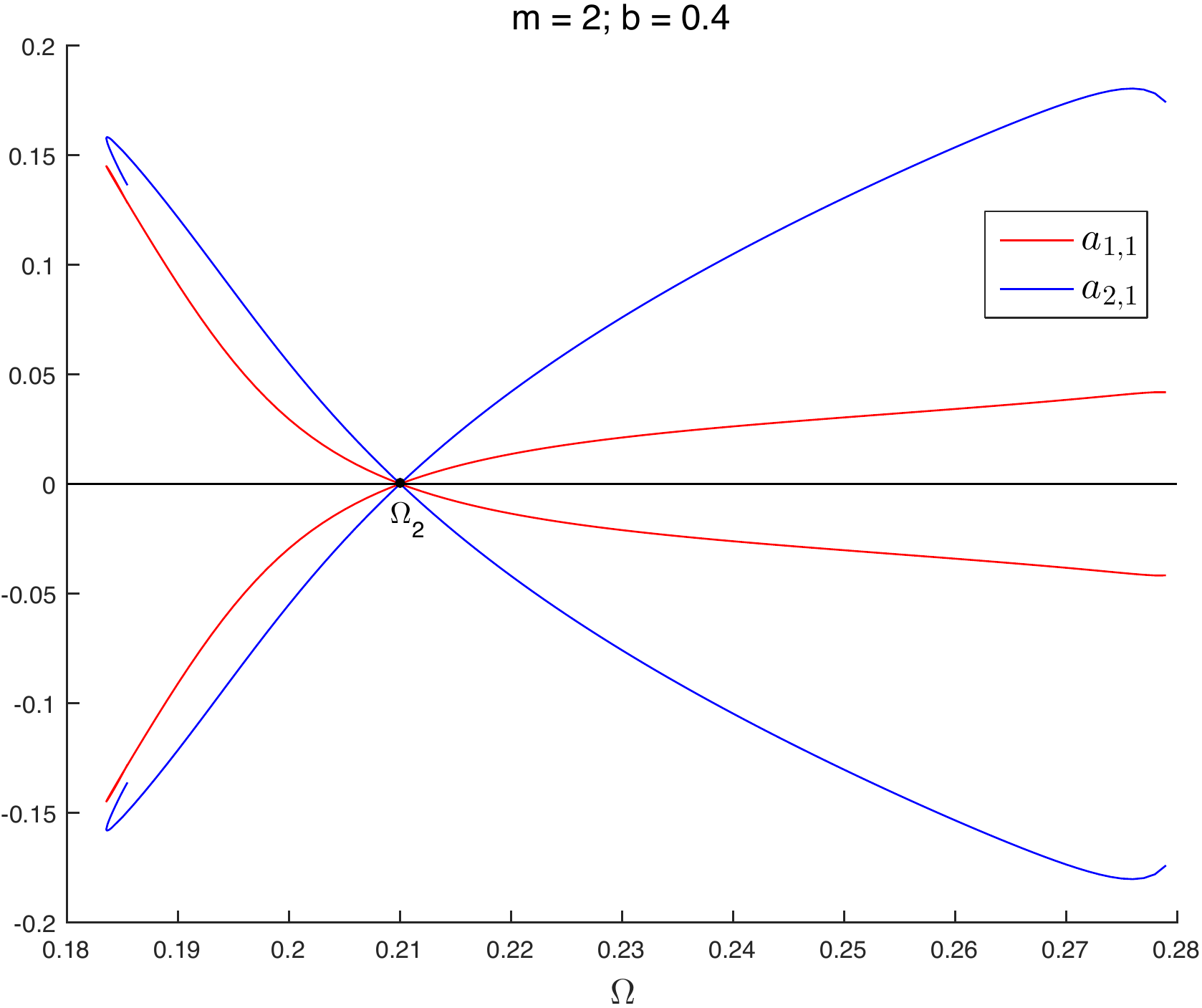}~
\caption{Bifurcation diagram.}
\label{flambda00}
\end{figure}

  In Figure \ref{flambda1} we plot the limiting V-states bifurcating from the annulus of small radius $b=0.4$. Note that there are two distinct  limiting V-states at the endpoints of the curve of bifurcation and  we observe in both cases the formation of two corner singularities at the inner curve, however the outer one remains smooth.
\begin{figure}[!htb]
\center
\includegraphics[width=0.5\textwidth, clip=true]{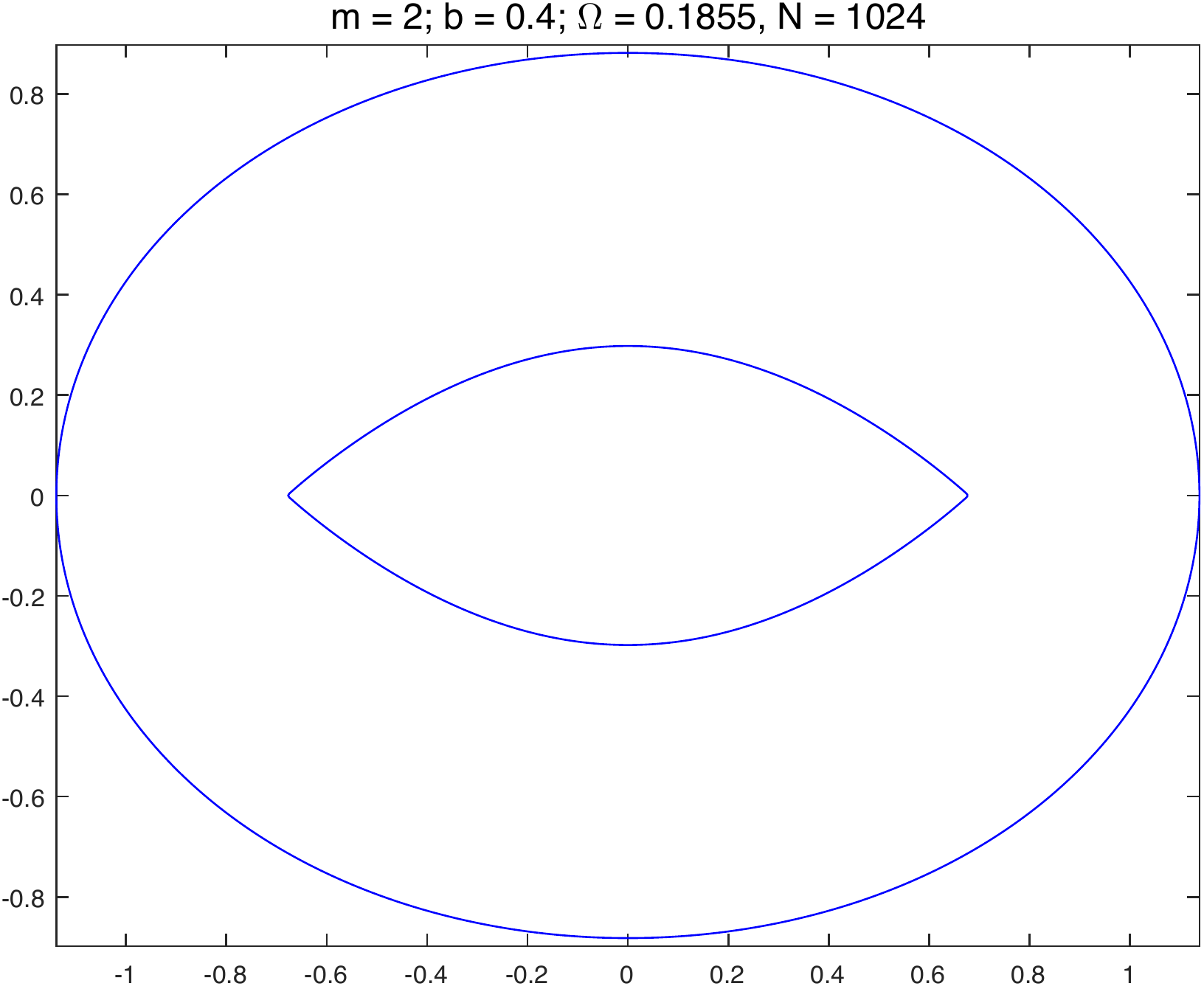}~
\includegraphics[width=0.5\textwidth, clip=true]{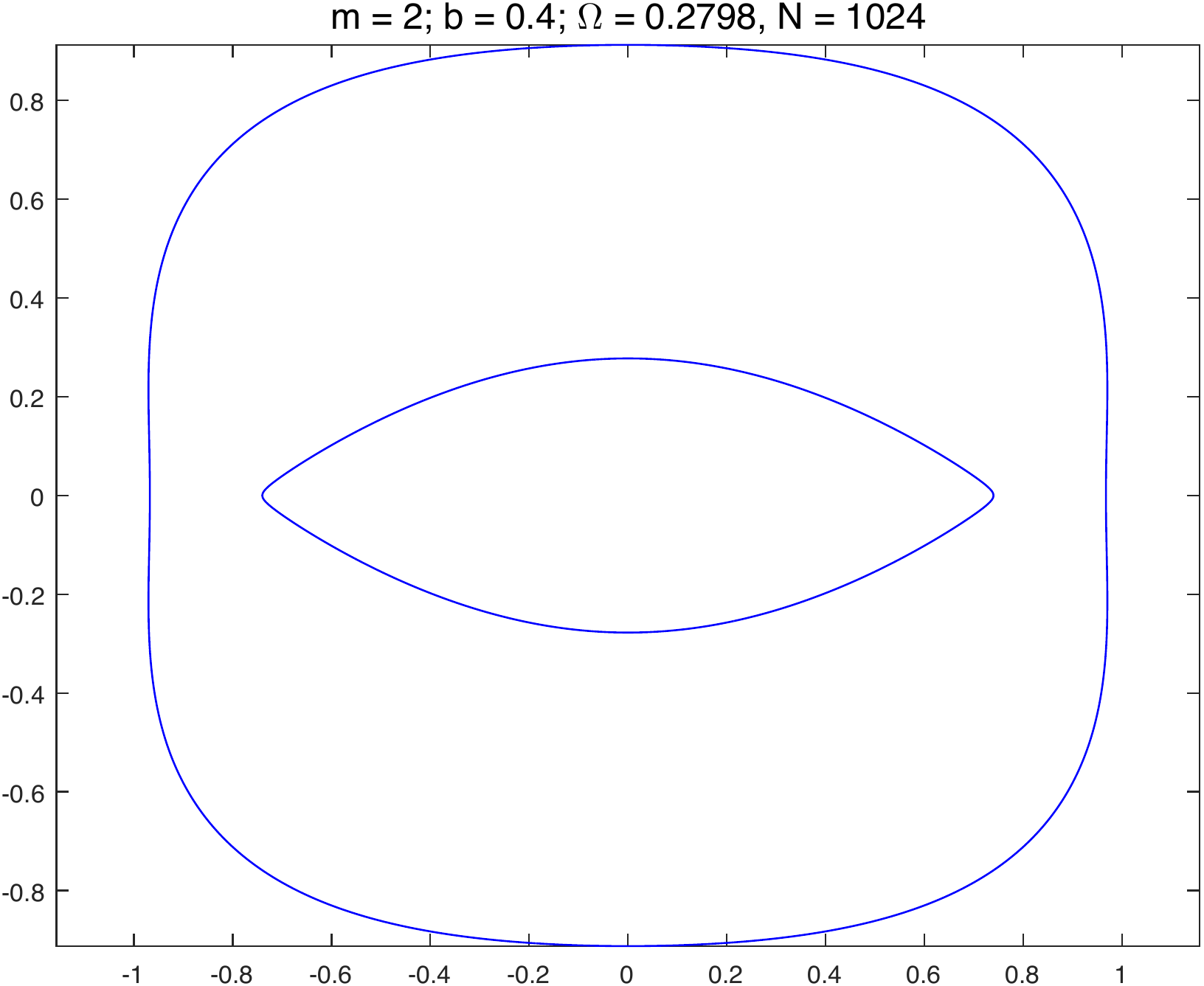}
\caption{Left:  the limiting V-state located  in the left side of branch of bifurcation. Right: the limiting V-state located  in the right side of the same branch.}
\label{flambda1}
\end{figure}

The paper will be organized as follows. In Section \ref{Sec33} we shall discuss the V-states equations and revisit the foundations of the stationary  bifurcation theory. In Section \ref{Sec-CR} we shall  deal with  the degenerate bifurcation and implement some general computations on the quadratic form associated to the reduced bifurcation equation. In section \ref{Sec44}, we implement explicitly  all the computations of the preceding section  in the special case of the V-states. The proof of \mbox{the Main theorem} will be given in Section \ref{Sec55}.
\vspace{0.3cm}

{\bf {Notation.}}
We need to collect some useful notation that will  be frequently used along this paper.
We shall use the symbol $\triangleq$ to define an object. Crandall-Rabinowitz theorem is sometimes shorten to CR theorem.
The unit disc  is denoted by $\mathbb{D}$ and its boundary, the unit circle, by  $\mathbb{T}$. 
For given continuous complex function  $f:\mathbb{T}\to \CC$, we define  its  mean value by,
$$
\fint_{\mathbb{T}} f(\tau)d\tau\triangleq \frac{1}{2i\pi}\int_{\mathbb{T}}  f(\tau)d\tau,
$$
where $d\tau$ stands for the complex integration.

  Let $X$ and $Y$ be two normed spaces. We denote by $\mathcal{L}(X,Y)$ the space of  all continuous linear maps $T: X\to Y$ endowed with its usual strong topology. 
We shall denote by $Ker T$ and $R(T)$  the null space  and the range of $T$, respectively. Finally, if $F$ is a subspace of $Y$, then $Y/ F$ denotes the quotient space.


 \section{Preliminaries and background}\label{Sec33}
 In this introductory section we shall gather some basic facts on the  V-states equations. Firstly, we shall write down the equations using   the conformal parametrization which appears to be much more convenient in the computation. Secondly,  we will focus on  the structure of the linearized operator around  the trivial solution and collect  some of its relevant   spectral properties. This is complemented by   Lyapunov-Schmidt reduction which allows us to perform the first step towards the bifurcation and leads to what is referred to as {\it reduced bifurcation equation}.
\subsection{Boundary equations}\label{bound548}
Let $D$ be a doubly-connected domain of the form $D=D_1\backslash D_2$ with $D_2\Subset D_1$ being  two simply-connected domains. Denote by  $\Gamma_j$ the boundary of the domain  $D_j$.  According to \cite{H-F-M-V},  the equation of the V-states rotating with the angular velocity $\Omega$ is  given by two coupled equations, one per each  boundary \mbox{component $\Gamma_j$.}
\begin{equation}\label{rotsqz1}
\,\textnormal{Re}\Big\{ \Big((1-\lambda)\overline{z}+ I(z)\Big)\, z^\prime\Big\}=0, \quad \forall\, z\in \Gamma_1\cup \Gamma_2,
\end{equation}
with $\lambda=1-2\Omega$ and 
$$
I(z)=\fint_{\Gamma_1}\frac{\overline{z}-\overline{\xi}}{z-\xi}d\xi-\fint_{\Gamma_2}\frac{\overline{z}-\overline{\xi}}{z-\xi}d\xi.
$$
The integrals are defined in the complex sense. Now we shall consider  the parametrization of the domains $D_j$  by the exterior conformal mappings: $\phi_j : \mathbb{D}^c \to D_j^c$ satisfying
\begin{equation*}
\phi_j(w)=b_jw+\sum_{n\geq 0}\frac{a_{j,n}}{w^n},\quad a_{j,n}\in \NN,
\end{equation*}
with $0<b_j<1$, $j=1,2$ and $b_2=b<b_1=1$. As we shall see later, this parametrization has many advantages especially in the computation of  the quadratic form associated  to the nonlinear   functional  through the use of residue theorem.  If the boundaries $\Gamma_j=\partial D_j$ are smooth enough then each conformal mapping admits univalent extension to the boundary $\mathbb{T}$, for instance see \cite{P,WS}. The restriction on $\Gamma_j$ is still denoted by $\phi_j$. After a change of variable in the integrals  we get
$$
I(z)=\fint_{\mathbb{T}}\frac{\overline{z}-\overline{\phi_1(\xi)}}{z-\phi_1(\xi)}\phi_1^\prime(\xi)d\xi-\fint_{\mathbb{T}}\frac{\overline{z}-\overline{\phi_2(\xi)}}{z-\phi_2(\xi)}\phi_2^\prime(\xi)d\xi.
$$
Setting $\phi_j=b_j \hbox{Id}+f_j$ then  the  equation \eqref{rotsqz1} becomes
$$
G(\lambda, f_1, f_2)=0
$$
with $G=(G_1,G_2)$ and 
\begin{equation}\label{G_j}
G_{j}(\lambda,f_1,f_2)(w) \triangleq \textnormal{Im}\bigg\{ \Big(2\Omega\overline{\phi_j(w)}+I(\phi_j(w))\Big)\, {w}\, {{\phi_j'(w)}}\bigg\}.
\end{equation}
We can easily check that
$$
G(\Omega,0,0)=0,\quad \forall \Omega\in \RR.
$$
This is coherent with the fact that an annulus is a stationary solution for Euler equations  and  due to the radial symmetry we can say that it rotates with any arbitrary angular velocity.

\subsection{Linearized operator and Lyapunov-Schmidt reduction}\label{Sec2}
In this section we shall recall some results from \cite{H-F-M-V} concerning the structure of the linearized operator around the trivial solution of the functional $G$ defining the V-states equation \eqref{G_j}. The case of double eigenvalues was left open in  \cite{H-F-M-V} because we lose the transversality assumption in Crandall-Rabinowitz theorem. Our main focus is to make some  preparations in order to understand this degenerate case. The strategy is to come back to the foundation of the bifurcation theory and try to improve these tools. As we shall see,   Lyapunov-Schmidt reduction is used to  transform the infinite-dimensional problem into a two-dimensional one. Roughly speaking, after some  suitable projections we reduce  the  V-states equation  in local coordinates into an equation of the type
$$
F_2(\lambda,t)=0,\quad  F_2:\mathbb{R}^2\to\mathbb{R}.
$$
The points where  the transversality is not satisfied correspond merely  to critical points for $F_2.$ It turns out that the quadratic form associated to $F_2$ giving Taylor expansion at the second order around the critical points is non-degenerate and its signature depends on the frequency $m$. This will be enough   to answer to the  bifurcation problem. 
Now we shall  introduce the function spaces that will capture the $m-$fold structure of the V-states.   
\begin{equation}\label{X_m}
X_m=\Big\{f=(f_1,f_2)\in (C^{1+\alpha}(\mathbb{T}))^2,\, f(w)=\sum_{n=1}^{\infty}A_{n}\overline{w}^{nm-1},\, A_n\in \mathbb{R}^2  \Big\}.
\end{equation}
\begin{equation}\label{Y_m}
Y_m=\Bigg\{G=(G_1,G_2)\in (C^{\alpha}(\mathbb{T}))^2,\, G=\sum_{n\geq 1}B_{n}e_{nm},\,\, 
B_{n}\in\mathbb R^2\Bigg\}, e_n(w)=\hbox{Im}(\overline{w}^n).
\end{equation}
The proof of the Main theorem consists in solving  the V-states equation in a small neighborhood of zero, namely, 
\begin{equation}\label{V-stat11}
G(\lambda,f)=0,\quad 
f=(f_1,f_2)\in B_{r}^m\times B_r^m\subset X_m,
\end{equation}
where we  denote by $B_r^m$  the ball of radius $r\in (0,1)$ in the space of functions which are one component of an element in $X_m$. 
The linearized operator around zero is  defined by 
$$
\mathcal{L}_{\lambda,b}h\triangleq D_fG(\lambda,0)h= \frac{d}{dt}G(\lambda,th)_{|t=0}
$$
and from \cite{H-F-M-V}, we assert that for $h=(h_1, h_2)\in X$  with the expansions
$$
h_j(w)=\sum_{n\geq1} \frac{a_{j,n}}{w^{nm-1}},
$$
one has
\begin{eqnarray}\label{ddff}
\mathcal{L}_{\lambda,b}h=\sum_{n\geq 1}M_{nm}(\lambda)\left( \begin{array}{c}
a_{1,n} \\
a_{2,n}
\end{array} \right)e_{nm}.
\end{eqnarray}
where the matrix $M_n$ is given for $n\geq1$  by
\begin{eqnarray}\label{Matr1}
 M_{n}(\lambda)&=&\begin{pmatrix}
n\lambda -1-n b^2 & b^{n+1} \\
 -b^n& b\big(n\lambda-n+1\big)
\end{pmatrix}.
\end{eqnarray}
The nonlinear eigenvalues  are the values of $\lambda$ such that at least one matrix $M_n$  is not invertible. Note the determinant equation is a polynomial of second order and it admits real roots if and only if the discriminant is positive. According to \cite[p.28]{H-F-M-V}, the roots are explicitly given by 
$$
\lambda_n^\pm=\frac{1+b^2}{2}\pm\frac1n\sqrt\Delta_n,
$$ 
with
\begin{equation}\label{Delta2}
\Delta_n= b^{2n}-\left( \frac{1-b^2}{2} n-
1\right)^2\geq0.
\end{equation}
 Bifurcation study when the roots are distinct, that is $\Delta_n>0$, was studied in \cite{H-F-M-V}. All the assumptions of Crandall-Rabinowitz theorem are satisfied and we have two distinct branches of solutions with $n-$fold symmetry. However, when $\Delta_n=0$ for some $n\geq2$  the transversality assumption is not satisfied  and  therefore CR theorem ceases to give  a profitable  result.  In Figure \ref{fbvs}, we draw for each $n\geq2$ the curve $\mathcal{C}_n$ which is the union of the curves  $b\in [0,b_n^\star]\mapsto \lambda_n^\pm$. Note that for  $n\geq3$, the curve associated to the sign  $+$ is  lying above  and communicates with the curve  associated to the sign $-$ at exactly  the turning  point whose abscissa is $b_n^\star$. This latter number giving the lifespan of the curves  coincides with $b_n$ defined below  in \eqref{relat1}. However for $n=2$,  $\Delta_2$ vanishes  for all the values of $b$ and therefore $\lambda_2^+=\lambda_2^-=\frac{1+b^2}{2}.$ Consequently  this eigenvalue is strictly increasing with respect to $b$ and it   cuts the curve $\mathcal{C}_n$ for each $n\geq3$ at the turning point (with abscissa $b_n$). This illustrates the special dynamics of $\lambda_2$ compared to higher ones and provides  somehow  insight into why the bifurcation occurs only for two-fold. 
  \begin{figure}[!htb]
\center
\includegraphics[width=0.5\textwidth, clip=true]{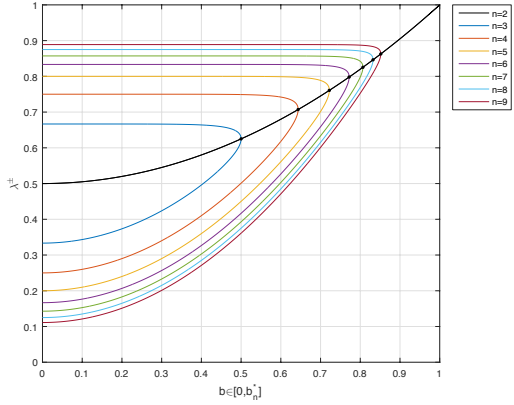}
\caption{\small{The curves of $\mathcal{C}_n$ giving  $ \lambda_n^\pm$ in function of $b$.}}
\label{fbvs}
\end{figure}
 As we have  already mentioned in the Introduction, the main target  of this paper is to focus on this
  degenerate case  where $\Delta_m=0$ for some $m\in \NN^\star$ and thus    $\lambda_n^+=\lambda_n^-=\frac{1+b^2}{2}$. The first step to settle this case is to understand the structure  of  the following set 
  $$\mathbb{S}\triangleq\Big\{(m,b)\in\NN^\star\times ]0,1[, \,\, \Delta_m=0\Big\}.
  $$
  It is plain that no solution is associated to $m=1$. However for $m=2,$ any $b\in]0,1[$ gives rise to an element of $\mathbb{S}$. As to the values $m\geq3$ we can easily check that the map $b\mapsto 1+{b}^{{m}}-\frac{1-{b}^2}{2} {m} $ is strictly increasing and therefore it admits only one solution denoted by  $b_m$, that is,
  \begin{equation}\label{relat1}
\frac{1-b_m^2}{2} m-1=b_m^m.
\end{equation}
It is not difficult to check that  this sequence is strictly increasing and converges to $1.$ Hence we deduce the decomposition,
 
$$\mathbb{S}=\Big\{(2,b), b\in ]0,1[\Big\}\cup \Big\{(m,b_m),m\geq3\Big\}.
$$
For a future use we need to make  the notation,
\begin{equation}\label{lambda6}
\lambda_m=\left\{ \begin{array}{ll}
\frac{1+b^2}{2},&\hbox{if} \quad m=2, \, b\in ]0,1[\\
\frac{1+b_m^2}{2},& \hbox{if} \quad m\geq3.
\end{array} \right. 
\end{equation}
Now we wish to  conduct a spectral study for the matrix $M_m(\lambda_m)$ that will be used later  to deduce some spectral properties of the linearized operator $\mathcal{L}_{\lambda,b}.$ For $m=2$ direct computations yield
$$
M_2(\lambda_2)=b^2\begin{pmatrix}
-1 & b  \\
 -1& b 
\end{pmatrix}.
$$
However, for $m\geq3$   we can easily check that 
$$
M_m(\lambda_m)=b^m\begin{pmatrix}
1 & b  \\
 -1& -b 
\end{pmatrix}.
$$
We may  unify the structure of these matrices as follows, \begin{equation}\label{formm}
M_m(\lambda_m)=b^m\begin{pmatrix}
-\varepsilon & b  \\
 -1& \varepsilon \, b
\end{pmatrix},\quad \varepsilon=\left\{ \begin{array}{ll}
1,&\hbox{if} \quad m=2\\
-1,& \hbox{if} \quad m\geq3.
\end{array} \right. 
\end{equation}
To lighten the notation we prefer dropping the subscript   on  $b_m$ which is noted  simply  by $b$. We can show without any difficulty that the   kernel of $M_m(\lambda_m)$  is one-dimensional and generated by the vector 
$
\left( \begin{array}{c}
\varepsilon b\\
1
\end{array} \right).$
Now take $m\geq2, b=b_m$ and $\lambda=\lambda_m$ then the kernel of the operator $\mathcal{L}_{\lambda_m, b}$ defined by \eqref{ddff} is one-dimensional in $X_m$ and generated by
\begin{equation}\label{kernexp}
w\in \mathbb{T}\mapsto v_m(w)=\left( \begin{array}{c}
  \varepsilon b\\
1
\end{array} \right)\overline{w}^{m-1}.
\end{equation}
However for $m=2$ and $b\notin \big\{b_m, m\geq3\big\}$ the kernel of $\mathcal{L}_{\lambda_2, b}$ is also one-dimensional in $X_2$ and generated by the same vector \eqref{kernexp} with $\varepsilon=1$. 
As to the case $m=2$ and $b=b_n$ for some $n\geq3$, then the kernel is one-dimensional in $X_2$ if $n$ is odd (since the vector $v_n\notin X_2$) but two-dimensional if $n$ is even. To formulate in a compact way the one-dimensional case we shall introduce the set
\begin{equation}\label{dim11}
\widehat{\mathbb{S}}\triangleq\mathbb{S}\backslash \big\{(2,b_{2m}), m\geq2\big\}.
\end{equation}
Therefore if $(m,b)\in \widehat{\mathbb{S}}$ then the kernel  of $\mathcal{L}_{\lambda_m, b}$ is one-dimensional in $X_m$ and generated \mbox{by  \eqref{kernexp}.}
In conclusion,   the foregoing results can be summarized in the next proposition as follows.
\begin{proposition}\label{strutz23}
Let $(m,b)\in \mathbb{S}$ then the following assertions hold true.
\begin{enumerate}
\item If $(m,b)\in \widehat{\mathbb{S}}$ then the kernel of $\mathcal{L}_{\lambda_m, b}$ is one-dimensional in $X_m$ and generated by the vector defined in  \eqref{kernexp}.
\item If $m=2$ and $b=b_{2n}$ for some $n\geq2$ then  the kernel of $\mathcal{L}_{\lambda_2, b}$ is two-dimensional \mbox{in $X_2$.}

\end{enumerate}
\end{proposition}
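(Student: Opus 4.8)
The plan is to exploit the Fourier-multiplier structure of $\mathcal{L}_{\lambda_m,b}$ recorded in \eqref{ddff}: an element $h=(h_1,h_2)\in X_m$ with $h_j(w)=\sum_{n\ge1}a_{j,n}\,\overline{w}^{\,nm-1}$ lies in $\ker\mathcal{L}_{\lambda_m,b}$ if and only if
$$
M_{nm}(\lambda_m)\begin{pmatrix}a_{1,n}\\ a_{2,n}\end{pmatrix}=0\qquad\text{for every }n\ge1.
$$
Since the frequencies $nm-1$ are pairwise distinct, the kernel is the direct sum of the contributions of the individual indices $n$, so $\dim\ker\mathcal{L}_{\lambda_m,b}=\sum_{n\ge1}\dim\ker M_{nm}(\lambda_m)$. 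Everything therefore reduces to deciding for which indices $N=nm$ the matrix $M_N(\lambda_m)$ is singular, together with the dimension of each such nullspace.

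First I would evaluate $\det M_N$ at the distinguished value $\lambda_m=\frac{1+b^2}{2}$. Substituting, the diagonal entries become $\tfrac12\big(N(1-b^2)-2\big)$ and $-\tfrac b2\big(N(1-b^2)-2\big)$, and a short computation gives
$$
\det M_N(\lambda_m)=-b\,\Delta_N,\qquad \Delta_N=\Big(\tfrac{N(1-b^2)}{2}-1\Big)^2-b^{2N}.
$$
Hence $M_N(\lambda_m)$ is singular precisely when $\Delta_N=0$; and when singular it is never the zero matrix, because the entry $b^{N+1}$ does not vanish, so its nullspace is exactly one-dimensional. For the base index $N=m$ this nullspace is spanned by the vector of \eqref{kernexp}.

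The remaining task is arithmetic: for fixed $(m,b)\in\mathbb{S}$, determine which $N=nm$, $n\ge1$, satisfy $\Delta_N=0$. The index $n=1$ is always singular since $\Delta_m=0$ by the very definition of $\mathbb{S}$, giving the one-dimensional piece \eqref{kernexp}. For $n\ge2$ I would first show that $\Delta_N=0$ with $N\ge3$ forces $b=b_N$: writing $\Delta_N=0$ as $\frac{N(1-b^2)}{2}-1=\pm b^N$, the $+$ branch is strictly decreasing from a positive value at $b=0$ to $-2$ at $b=1$, hence has the unique root $b_N\in(0,1)$, while the $-$ branch is strictly decreasing and equals $0$ only at $b=1$, so has no root in $(0,1)$. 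Combined with the strict monotonicity of $(b_k)_{k\ge3}$ this settles both assertions. For (i) with $m\ge3$ one has $b_{nm}>b_m=b$ for every $n\ge2$, so $\Delta_{nm}\ne0$ and no higher mode contributes; for (i) with $m=2$, the hypothesis $(2,b)\in\widehat{\mathbb{S}}$ reads $b\notin\{b_{2k}:k\ge2\}$, which is exactly the statement that $M_{2n}(\lambda_2)$ stays invertible for all $n\ge2$. In both subcases the kernel is one-dimensional. For (ii), $b=b_{2n_0}$ makes $M_{2n_0}(\lambda_2)$ singular in addition to $M_2(\lambda_2)$, and strict monotonicity excludes any further coincidence $b=b_{2n}$; one then checks the parity point that the additional generator $\binom{-b}{1}\,\overline{w}^{\,2n_0-1}$ carries the odd frequency $2n_0-1$ and so genuinely belongs to $X_2$, producing a second independent direction and a two-dimensional kernel.

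The main obstacle, beyond the bookkeeping, is the identity $\det M_N(\lambda_m)=-b\,\Delta_N$: the point is that the frequency-independent double eigenvalue $\lambda_m=\frac{1+b^2}{2}$ reappears inside $\det M_N$ as the same discriminant function evaluated at the index $N$, which is what makes accidental degeneracies at the higher modes controllable. Once this is in hand the problem collapses to solving $\Delta_N=0$ in $(0,1)$, and the only genuinely delicate step left is the parity/membership test in $X_m$ that separates $\widehat{\mathbb{S}}$ from its complement and thereby distinguishes the one- and two-dimensional cases.
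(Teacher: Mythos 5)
Your argument is correct and follows essentially the same route as the paper: diagonalize $\mathcal{L}_{\lambda_m,b}$ into the $2\times 2$ multipliers $M_{nm}(\lambda_m)$, reduce singularity at $\lambda_m=\frac{1+b^2}{2}$ to $\Delta_{nm}=0$ (hence $b=b_{nm}$ for $nm\geq3$, via the monotonicity of the two branches), and use the strict monotonicity of $(b_k)_{k\geq3}$ together with the parity test for membership in $X_2$ to count the singular modes and their one-dimensional nullspaces. Your identity $\det M_N(\lambda_m)=-b\,\Delta_N$ is precisely what the paper exploits implicitly through its eigenvalue formula $\lambda_N^{\pm}=\frac{1+b^2}{2}\pm\frac{1}{N}\sqrt{\Delta_N}$, so the two proofs coincide in substance, with yours merely making the invertibility of the higher modes more explicit.
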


Our next task   is to implement   Lyapunov-Schmidt reduction used as a basic tool  in  the bifurcation theory. One could revisit this procedure in a general setup but for the sake of simplicity we prefer limit the description to our context of the V-states. Let $(m,b)\in \widehat{\mathbb{S}}$ and 
denote by $\mathcal{X}_m$ any  complement of the kernel $\langle v_m\rangle$ in $X_m$. From now on we shall work with the following candidate : 
\begin{equation}\label{fs1} 
h\in \mathcal{X}_m\Longleftrightarrow 
h\in (C^{1+\alpha}(\mathbb{T}))^2,\,
 h(w)=\sum_{n\geq2}{A_n}\overline{w}^{nm-1}+\alpha\left( \begin{array}{c}
  1 \\
0
\end{array} \right)\overline{w}^{m-1}, A_n\in \mathbb R^2, \alpha\in\RR.
\end{equation}
It is not hard to check that this sub-space is complete and
$$
X_m=\langle v_m\rangle \oplus\mathcal{X}_m.
$$
Now denote by $\mathcal{Y}_m$ the range of $D_fG(\lambda_m,0)$ in $Y_m$ which is explicitly described by : 
\begin{equation}\label{fs10}
k\in \mathcal{Y}_m \Longleftrightarrow k\in (C^{\alpha}(\mathbb{T}))^2,\,\,\, k(w)=\sum_{n\geq2}{B_n}e_{nm}+\beta\left( \begin{array}{c}
  \varepsilon \\
1
\end{array} \right)e_m, \quad B_n\in \RR^2, \,\beta\in\mathbb{R}.
\end{equation}
Then we observe that it is of co-dimension one and admits as a complement the line   
 $\langle \mathbb{W}_m\rangle $  generated by the vector 
\begin{eqnarray*}
 \mathbb{W}_m&=&\left( \begin{array}{c} 
  \frac{1}{\sqrt{2}} \\
-\frac{\varepsilon}{\sqrt{2}}
\end{array} \right) \, e_m\\
&\triangleq&\widehat{\mathbb{W}}\, e_m.
\end{eqnarray*}
Thus we may write 
$$
 Y_m=\langle  \mathbb{W}_m\rangle \oplus\mathcal{Y}_m.
$$
Consider  two projections $P: X_m\mapsto\langle v_m\rangle$ and $Q:  Y_m\mapsto \langle  \mathbb{W}_m\rangle$. 
As we shall see the structure of $P$ does not matter in the computations and therefore there is no need to work with an explicit element. However for the projection $Q$ we need to get an explicit one. This will be given  by the  "orthogonal projection", that is,  for $h\in Y_m$ defined in \eqref{Y_m} we set
\begin{equation}\label{project1}
h(w)=\sum_{n\geq1} B_n e_{nm},\quad Qh(w)=\big\langle B_1,\widehat{\mathbb{W}}\big\rangle  \mathbb{W}_m
\end{equation}
and $\langle,\rangle$ denotes the Euclidean scalar product of $\mathbb R^2.$ By virtue of the definition we observe that
\begin{equation}\label{annul}
Q\partial_fG(\lambda_m,0)=0.
\end{equation}
For $f\in X_m,$ we define  the components,
 $$g=Pf\quad\hbox{and}\quad  k=(\hbox{Id}-P)f.
 $$
   Then the V-states equation \eqref{V-stat11} is equivalent to the system
$$
F_1(\lambda,g, k)\triangleq(\hbox{Id}-Q)G(\lambda,g+k)=0\quad \hbox{and}\quad QG(\lambda, g+k)=0.
$$
The function $F_1:\RR\times  \langle v_m\rangle\times \mathcal{X}_m\to\mathcal{Y}_m$  is well-defined and smooth. The regularity follows from the study done in \cite{H-F-M-V}. Moreover, it is not difficult to check that 
\begin{equation}\label{invert1}
D_k F_1(\lambda_m,0,0)=(\hbox{Id}-Q)\partial_fG(\lambda_m,0).
\end{equation}
Using  \eqref{annul} we deduce that
$$D_k F_1(\lambda_m,0,0)=\partial_fG(\lambda_m,0): \mathcal{X}_m\to \mathcal{Y}_m,
$$  
which is invertible and by  \eqref{formm} we can explicitly get its inverse :
\begin{equation}\label{Inv}
\partial_fG(\lambda_m,0)h=k\Longleftrightarrow \forall n\geq2, A_n=M_{nm}^{-1} B_n\quad  \hbox{and}\quad \alpha=-\frac{\beta}{b^m}\cdot
\end{equation}
By the Implicit Function Theorem the solutions of the equation $F_1(\lambda_m,g,k)=0$ are described near the point $(\lambda_m,0)$ by the parametrization $k=\varphi(\lambda, g)$ with 
$$\varphi:\mathbb R\times \langle v_m\rangle\to\mathcal{X}_m.
$$ Therefore the resolution of V-states equation close to $(\lambda_m,0)$ is equivalent to  
$$
QG\big(\lambda, t v_m+\varphi(\lambda,t v_m)\big)=0.
$$
From the assumption of CR theorem $G(\lambda,0)=0, \,\forall \lambda$, one can deduce that
\begin{equation}\label{zero1}
\varphi(\lambda,0)=0,\quad\forall \lambda \quad \hbox{close to}\quad \lambda_m.
\end{equation}
Using  Taylor formula on the variable $t$  in order to get rid of the trivial solution corresponding to $t=0$, then  nontrivial solutions to \eqref{V-stat11} are locally described close to  $(\lambda_m,0)$ by the equation
\begin{equation}\label{Bifur1}
F_2(\lambda,t)\triangleq\int_0^1Q\partial_fG\big(\lambda, st v_m+\varphi(\lambda,st v_m)\big)\big(v_m+\partial_g\varphi(\lambda,st v_m) v_m\big)ds=0.
\end{equation}
This equation is called along this paper bifurcation equation. Moreover, using the relation \eqref{annul} we find
$$
F_2(\lambda_m,0)=0.
$$

%
%
%
\section{Degenerate bifurcation} \label{Sec-CR}

In this section we intend to compute the Jacobian and the Hessian of $F_2$ in a general framework. This  allows to go beyond  Crandall-Rabinowitz theorem and give profitable information when   the  transversality condition is not satisfied. To be precise about the terminology, we talk about {\it degenerate bifurcation} when the bifurcation occurs without transversality assumption. 
First  we  recall Crandall-Rabinowitz theorem \cite{CR} which  throughout this paper is often denoted by CR theorem.
\begin{theorem}\label{C-R} Let $X, Y$ be two Banach spaces, $V$ a neighborhood of $0$ in $X$ and let 
$
F : \RR \times V \to Y
$
with the following  properties:
\begin{enumerate}
\item $F (\lambda, 0) = 0$ for any $\lambda\in \RR$.
\item The partial derivatives $F_\lambda$, $F_x$ and $F_{\lambda x}$ exist and are continuous.
\item The spaces $\hbox{Ker }\mathcal{L}_0$ and $Y/R(\mathcal{L}_0)$ are one-dimensional. 
\item {\it Transversality assumption}: $\partial_\lambda\partial_xF(0, 0)x_0 \not\in R(\mathcal{L}_0)$, where
$$
\hbox{Ker }\mathcal{L}_0 = span\{x_0\}, \quad \mathcal{L}_0\triangleq \partial_x F(0,0).
$$
\end{enumerate}
If $Z$ is any complement of $\hbox{Ker }\mathcal{L}_0$ in $X$, then there is a neighborhood $U$ of $(0,0)$ in $\RR \times X$, an interval $(-a,a)$, and continuous functions $\varphi: (-a,a) \to \RR$, $\psi: (-a,a) \to Z$ such that $\varphi(0) = 0$, $\psi(0) = 0$ and
$$
F^{-1}(0)\cap U=\Big\{\big(\varphi(\xi), \xi x_0+\xi\psi(\xi)\big)\,;\,\vert \xi\vert<a\Big\}\cup\Big\{(\lambda,0)\,;\, (\lambda,0)\in U\Big\}.
$$
\end{theorem}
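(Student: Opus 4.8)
The plan is to prove the theorem by a Lyapunov--Schmidt reduction followed by two successive applications of the Implicit Function Theorem, the scheme being exactly the one implemented in Section~\ref{Sec2} for the V-states functional. Write $\mathcal{L}_0 = \partial_x F(0,0)$ and fix splittings $X = \ker \mathcal{L}_0 \oplus Z$ and $Y = R(\mathcal{L}_0)\oplus W$, where $\dim W = 1$ because $Y/R(\mathcal{L}_0)$ is one-dimensional (so that $R(\mathcal{L}_0)$ is closed). Let $Q:Y\to W$ be the projection onto $W$ parallel to $R(\mathcal{L}_0)$. Decomposing $x = \xi x_0 + z$ with $z\in Z$, the equation $F(\lambda,x)=0$ splits into the pair
$$
(\mathrm{Id}-Q)F(\lambda, \xi x_0 + z)=0,\qquad QF(\lambda,\xi x_0+z)=0.
$$

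First I would solve the auxiliary (first) equation for $z$. Its $z$-derivative at the origin is $(\mathrm{Id}-Q)\mathcal{L}_0 = \mathcal{L}_0|_Z : Z\to R(\mathcal{L}_0)$, which is an isomorphism since $\mathcal{L}_0$ is injective on $Z$ with range $R(\mathcal{L}_0)$. The Implicit Function Theorem then yields a smooth map $z=\eta(\lambda,\xi)$ solving it near $(0,0)$, and the hypothesis $F(\lambda,0)=0$ forces $\eta(\lambda,0)=0$ for all $\lambda$ close to $0$. Substituting into the second equation reduces the whole problem to the scalar (as $\dim W =1$) bifurcation equation $\Phi(\lambda,\xi)\triangleq QF(\lambda,\xi x_0+\eta(\lambda,\xi))=0$, which is precisely the finite-dimensional reduction playing the role of $F_2$.

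Since $\Phi(\lambda,0)=0$ for every $\lambda$, I would peel off the trivial branch by writing $\Phi(\lambda,\xi)=\xi\,\Psi(\lambda,\xi)$ with $\Psi(\lambda,\xi)=\int_0^1 \partial_\xi\Phi(\lambda,s\xi)\,ds$; the nontrivial solutions are exactly the zeros of $\Psi$. It then remains to solve $\Psi(\lambda,\xi)=0$ for $\lambda=\varphi(\xi)$ by one last use of the Implicit Function Theorem, which needs $\partial_\lambda\Psi(0,0)\neq 0$. This computation is the heart of the matter, and it is where transversality enters. Differentiating the auxiliary equation in $\xi$ at the origin and using injectivity of $\mathcal{L}_0|_Z$ first gives $\partial_\xi\eta(0,0)=0$; hence $\Psi(0,0)=Q\mathcal{L}_0 x_0 = 0$, consistent with $(0,0)$ being a bifurcation point. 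Differentiating once more in $\lambda$, every term in which $\mathcal{L}_0$ is applied to an element of $Z$ is annihilated by $Q$, so that
$$
\partial_\lambda\Psi(0,0)=Q\,\partial_\lambda\partial_x F(0,0)x_0.
$$
The transversality assumption says exactly that $\partial_\lambda\partial_x F(0,0)x_0\notin R(\mathcal{L}_0)$, i.e. $Q\,\partial_\lambda\partial_x F(0,0)x_0\neq 0$, which is the nonvanishing we need.

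Finally, the IFT produces a continuous $\varphi$ with $\varphi(0)=0$ solving $\Psi(\varphi(\xi),\xi)=0$, and setting $\psi(\xi)\triangleq\int_0^1\partial_\xi\eta(\varphi(\xi),s\xi)\,ds$ (which equals $\tfrac1\xi\eta(\varphi(\xi),\xi)$ for $\xi\neq0$ and is continuous with $\psi(0)=\partial_\xi\eta(0,0)=0$) produces the nontrivial branch $\big(\varphi(\xi),\,\xi x_0+\xi\psi(\xi)\big)$. Together with the trivial branch $\{(\lambda,0)\}$, and the fact that the Lyapunov--Schmidt reduction is a bijection between solutions of $F=0$ near the origin and solutions of the reduced equation, this gives the stated description of $F^{-1}(0)\cap U$. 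The only delicate points are the bookkeeping that lets one factor $\xi$ out of both $\Phi$ and $\eta$, and the derivative identity $\partial_\lambda\Psi(0,0)=Q\,\partial_\lambda\partial_x F(0,0)x_0$; everything else is the Implicit Function Theorem applied in the correct order.
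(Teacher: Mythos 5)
Your proposal is correct and is essentially the paper's own route: the splitting with the projection $Q$, the auxiliary IFT producing $\eta$ (the paper's $\varphi(\lambda,g)$ with $\eta(\lambda,0)=0$ and $\partial_\xi\eta(0,0)=0$ as in Lemma~\ref{platit1}), the integral factorization of the trivial branch yielding $\Psi$ (the paper's $F_2$ in \eqref{Bifur1}), and the final IFT driven by $\partial_\lambda\Psi(0,0)=Q\,\partial_\lambda\partial_x F(0,0)x_0\neq 0$, which is exactly Proposition~\ref{platit2}-(1) together with the remark opening Section~\ref{Sec-CR} that CR theorem is an immediate consequence of the implicit function theorem applied to $F_2$. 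One caveat, shared by the paper's gloss: under hypothesis (ii) alone your chain-rule evaluation of $\partial_\lambda\Psi$ near (not at) the origin implicitly differentiates $F_x(\lambda,\xi x_0+\eta(\lambda,\xi))$ through its second slot, i.e.\ uses an $F_{xx}$ that is not assumed (and ``smooth $\eta$'' overstates what the IFT gives), which is precisely why the original argument in \cite{CR} runs a single implicit function theorem in the pair $(\lambda,z)$ applied to $(t,\lambda,z)\mapsto t^{-1}F\big(\lambda,t(x_0+z)\big)$, all of whose required partial derivatives are expressible through $F_x$ and $F_{\lambda x}$ only.
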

We adopt in this section the same notation and definitions of the preceding section. Notice that CR theorem is an immediate consequence of the implicit function theorem applied to the function $F_2$ introduced in \eqref{Bifur1}. Indeed,  by virtue of  Proposition \ref{platit2}-(1) we have
$$
\partial_\lambda F_2(\lambda_m,0)=Q\partial_\lambda\partial_f G(\lambda_m,0)v_m.
$$ 
It is apparent that the  transversality assumption  is equivalent to saying  $\partial_\lambda F_2(\lambda_m,0)\neq0$ and thus we can apply the implicit function theorem. Unfortunately, this condition is no longer satisfied under the conditions of the Main theorem and we should henceforth work with the following assumption :\begin{equation}\label{lostra}
Q\partial_\lambda\partial_fG(\lambda_m,0)v_m=0.
\end{equation}
This means that $\partial_\lambda F_2(\lambda_m,0)=0$ and we shall see later in Proposition \ref{explicitval} that  $\partial_t F_2(\lambda_m,0)$ is also vanishing. The existence of nontrivial solutions to $F_2$ will be answered through the study of its Taylor expansion at the second order. 
In the next lemma we shall give some relations on the first derivatives of the function $\varphi.$

\begin{lemma}\label{platit1}
Assume that the function $G$  satisfies the conditions $(i)-(ii)-(iii)$ of CR theorem. Then 
$$
\partial_\lambda\varphi (\lambda_m,0)=\partial_g\varphi(\lambda_m,0)=0.
$$
In addition  if  $G$ is $C^2$ and satisfies $\partial_{\lambda\lambda}G(\lambda_m,0)=0$ then 
$$
\partial_{\lambda\lambda}\varphi (\lambda_m,0)=0.
$$

\end{lemma}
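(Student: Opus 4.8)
The plan is to differentiate the relation defining $\varphi$, namely $(\mathrm{Id}-Q)G(\lambda,g+\varphi(\lambda,g))=0$, and to peel off the derivatives of $\varphi$ using the two facts already recorded in the excerpt: that $\varphi(\lambda,0)=0$ for all $\lambda$ near $\lambda_m$ (see \eqref{zero1}), and that $(\mathrm{Id}-Q)\partial_fG(\lambda_m,0)=D_kF_1(\lambda_m,0,0)$ restricts to an isomorphism of $\mathcal{X}_m$ onto $\mathcal{Y}_m$ (by \eqref{invert1} and the invertibility noted just after it). Since every partial derivative of $\varphi$ takes values in the complement $\mathcal{X}_m$, applying the inverse of this isomorphism will be the mechanism that forces them to vanish.

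For the first order derivatives I would argue as follows. Writing $\Phi(\lambda):=\varphi(\lambda,0)$, the relation at $g=0$ reads $(\mathrm{Id}-Q)G(\lambda,\Phi(\lambda))=0$; one differentiation in $\lambda$ and evaluation at $\lambda_m$ (where $\Phi(\lambda_m)=0$) give $(\mathrm{Id}-Q)\big[\partial_\lambda G(\lambda_m,0)+\partial_fG(\lambda_m,0)\Phi'(\lambda_m)\big]=0$. Condition $(i)$, $G(\lambda,0)\equiv0$, yields $\partial_\lambda G(\lambda_m,0)=0$, and invertibility on $\mathcal{X}_m$ then forces $\Phi'(\lambda_m)=\partial_\lambda\varphi(\lambda_m,0)=0$. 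For $\partial_g\varphi$ I would instead freeze $\lambda=\lambda_m$, set $\psi(t):=\varphi(\lambda_m,tv_m)$ and differentiate $(\mathrm{Id}-Q)G(\lambda_m,tv_m+\psi(t))=0$ in $t$ at $t=0$, obtaining $(\mathrm{Id}-Q)\partial_fG(\lambda_m,0)\big[v_m+\psi'(0)\big]=0$. The decisive point here is that $v_m$ generates $\ker\partial_fG(\lambda_m,0)$ by \eqref{kernexp}, so the $v_m$ term is annihilated and $(\mathrm{Id}-Q)\partial_fG(\lambda_m,0)\psi'(0)=0$; invertibility again gives $\psi'(0)=\partial_g\varphi(\lambda_m,0)=0$.

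For the second order statement, the extra hypothesis $G\in C^2$ lets the implicit function theorem upgrade $\varphi$ (hence $\Phi$) to a $C^2$ map, so the second derivative is meaningful. Differentiating $(\mathrm{Id}-Q)G(\lambda,\Phi(\lambda))=0$ twice and evaluating at $\lambda_m$, every term carrying the factor $\Phi'(\lambda_m)$ drops out by the first step, leaving $(\mathrm{Id}-Q)\big[\partial_{\lambda\lambda}G(\lambda_m,0)+\partial_fG(\lambda_m,0)\Phi''(\lambda_m)\big]=0$. The assumption $\partial_{\lambda\lambda}G(\lambda_m,0)=0$---which is in any case automatic after differentiating $(i)$ twice---removes the first term, and a final application of the isomorphism yields $\Phi''(\lambda_m)=\partial_{\lambda\lambda}\varphi(\lambda_m,0)=0$.

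I expect no serious analytic obstacle: the argument is a routine Lyapunov--Schmidt computation whose only delicate aspect is the chain-rule bookkeeping in the last step and the verification that each derivative of $\varphi$ genuinely lands in $\mathcal{X}_m$, so that the inverse of $(\mathrm{Id}-Q)\partial_fG(\lambda_m,0)|_{\mathcal{X}_m}$ may legitimately be applied. It is worth remarking that $\partial_\lambda\varphi(\lambda_m,0)$ and $\partial_{\lambda\lambda}\varphi(\lambda_m,0)$ can also be read off at once from $\varphi(\lambda,0)\equiv0$; the differentiation route above is preferred only because it exhibits precisely which properties of $G$ are being used.
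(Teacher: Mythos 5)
Your proposal is correct and takes essentially the same route as the paper: the paper likewise proves $\partial_g\varphi(\lambda_m,0)v_m=0$ by differentiating $(\mathrm{Id}-Q)G\big(\lambda, t v_m+\varphi(\lambda,t v_m)\big)=0$ in $t$ and using $v_m\in \ker \partial_fG(\lambda_m,0)$, \eqref{annul} and the invertibility of $\partial_fG(\lambda_m,0):\mathcal{X}_m\to\mathcal{Y}_m$, and it obtains $\partial_{\lambda\lambda}\varphi(\lambda_m,0)=0$ by differentiating that equation twice in $\lambda$ exactly as you do. Your closing remark is also consistent with the paper, which deduces $\partial_\lambda\varphi(\lambda_m,0)=0$ directly from \eqref{zero1} (and, as you observe, the hypothesis $\partial_{\lambda\lambda}G(\lambda_m,0)=0$ is in fact automatic from condition (i), so your sharper observation does no harm).
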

\begin{proof}
The first result follows easily from \eqref{zero1}. Concerning the second one, we rewrite  the equation of $F_1$ previously introduced  in Section \ref{Sec2}  in a neighborhood of $(\lambda_m,0)$ as follows,
\begin{equation}\label{th11}
(\hbox{Id}-Q)G\big(\lambda, t v_m+\varphi(\lambda,t v_m)\big)=0.
\end{equation}
Differentiating this equation  with respect to $t$  we get
$$
(\hbox{Id}-Q)\partial_fG\big(\lambda_m,0\big)\big(v_m+\partial_g\varphi(\lambda_m,0) v_m\big)=0.
$$
From \eqref{annul}  and  since $v_m\in \hbox{Ker } \partial_fG\big(\lambda_m,0\big)$ we may obtain
$$
\partial_fG\big(\lambda_m,0\big)\big(\partial_g\varphi(\lambda_m,0) v_m\big)=0.
$$
As by definition $(\partial_g\varphi(\lambda_m,0) v_m\in \mathcal{X}_m$ which a complement of the kernel of $\partial_fG\big(\lambda_m,0\big)$ then we get
$$
\partial_g\varphi(\lambda_m,0) v_m=0.
$$
It remains to check the last identity. Differentiating the equation \eqref{th11} twice  with respect to $\lambda$ we get
\begin{eqnarray*}
0&=&(\hbox{Id}-Q)\partial_{ff}G\big(\lambda_m,0\big)\Big[\partial_\lambda\varphi(\lambda_m,0),\partial_\lambda\varphi(\lambda_m,0)\Big]\\
&+&2(\hbox{Id}-Q)\partial_f\partial_\lambda G\big(\lambda_m,0\big)\partial_\lambda\varphi(\lambda_m,0)+(\hbox{Id}-Q)\partial_fG\big(\lambda_m,0\big)\partial_{\lambda\lambda}\varphi(\lambda_m,0)\\
&+&(\hbox{Id}-Q)\partial_{\lambda\lambda}G\big(\lambda_m,0\big).
\end{eqnarray*}
In the preceding identity we have used the following notation: for $v,w\in X$, $\partial_{ff}G[v,w]$ denotes  the value of the second derivative at the vector $(v,w).$ Using the relations $\partial_\lambda\varphi(\lambda_m,0)=\partial_g\varphi(\lambda_m,0)=0$ combined with the fact $\partial_{\lambda\lambda}G(\lambda_m,0)=0$ we get
$$
(\hbox{Id}-Q)\partial_fG\big(\lambda_m,0\big)\partial_{\lambda\lambda}\varphi(\lambda_m,0)=0.
$$
By \eqref{annul} we conclude that $\partial_{\lambda\lambda}\varphi(\lambda_m,0)\in \hbox{Ker }\partial_fG\big(\lambda_m,0\big)$ and we know from the  image of $\varphi$ that this vector belongs also to the vector space $\mathcal{X}_m$ which is a complement of the kernel. Therefore we deduce that
$$
\partial_{\lambda\lambda}\varphi(\lambda_m,0)=0.
$$
This achieves the proof.
\end{proof}
Next we shall be concerned with  some basic formulae for the  Jacobian and the Hessian  of the function $F_2$ introduced in \eqref{Bifur1}.
 \begin{proposition}\label{platit2}
 Assume that $G$ is sufficiently smooth ( of class $C^3$) and satisfies the conditions $(i)-(ii)-(iii)$ of CR theorem. Then the following assertions hold true.
\begin{enumerate}
\item First derivatives: 
$$
\partial_{\lambda}F_2(\lambda_m,0)=Q\partial_\lambda \partial_fG\big(\lambda_m,0\big)v_m.
$$
\begin{eqnarray*}
2\partial_t F_2(\lambda_m,0)&=& Q\partial_{ff}G(\lambda_m,0) \big[v_m,v_m\big]\\
&=&\frac{d^2}{dt^2} QG(\lambda_m, tv_m)_{|t=0}.
\end{eqnarray*}
\item Formula for $\partial_{tt}F_2(\lambda_m,0).$
$$
\partial_{tt}F_2(\lambda_m,0)=\frac13\frac{d^3}{dt^3}QG(\lambda_m, tv_m)_{|t=0}+Q\partial_{ff}G(\lambda_m,0)\big[v_m,\widetilde{v}_m\big]
$$
with $\widetilde{v}_m=\frac{d^2}{dt^2}\varphi(\lambda_m, tv_m)_{|t=0}$
and it is given by
$$
\widetilde{v}_m=-\Big[\partial_fG(\lambda_m,0)\Big]^{-1}\frac{d^2}{dt^2} (\textnormal{Id}-Q)G(\lambda_m, tv_m)_{|t=0}
$$
$$
Q\partial_{ff}G(\lambda_m,0)\big[v_m,\widetilde{v}_m\big]=Q\partial_t\partial_sG(\lambda_m, tv_m+s \widetilde{v}_m)_{|t=0,s=0}
$$

\item  Formula for $\partial_t\partial_{\lambda}F_2(\lambda_m,0).$ 
\begin{eqnarray*}
\partial_{t}\partial_\lambda F_2(\lambda_m,0)&=&\frac12Q\partial_{\lambda }\partial_{ff}G(\lambda_m,0)[v_m,v_m]+Q\partial_{ff}G(\lambda_m,0)\big[\partial_{\lambda}\partial_g\varphi(\lambda_m,0)v_m,v_m\big]\\
&+&\frac12 Q\partial_{\lambda}\partial_fG(\lambda_m,0) \widetilde{v}_m
\end{eqnarray*}
and 
$$
\partial_{\lambda}\partial_g\varphi(\lambda_m,0)v_m=-\Big[\partial_fG(\lambda_m,0)\Big]^{-1}(\hbox{Id}-Q)\partial_\lambda\partial_f G(\lambda_m,0)v_m.
$$
\item Formula for $\partial_{\lambda\lambda}F_2(\lambda_m,0).$ Assume in addition that $\partial_{\lambda\lambda}G(\lambda_m, 0)=0$, then 
$$
\partial_{\lambda\lambda} F_2(\lambda_m,0)=-2Q\partial_{\lambda}\partial_fG(\lambda_m,0)\Big[\partial_fG(\lambda_m,0)\Big]^{-1}(\textnormal{Id}-Q)\partial_{\lambda}\partial_f G(\lambda_m,0)v_m.
$$

\end{enumerate}
\end{proposition}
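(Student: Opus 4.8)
The plan is to reduce every derivative of $F_2$ to an ordinary Taylor coefficient, in $t$, of the scalar composite map
$$
\Phi(\lambda,t)\triangleq QG\big(\lambda,\, tv_m+\varphi(\lambda,tv_m)\big).
$$
By \eqref{zero1} we have $\varphi(\lambda,0)=0$, so $\Phi(\lambda,0)=QG(\lambda,0)=0$, and applying the fundamental theorem of calculus exactly as in \eqref{Bifur1} gives the identity $\Phi(\lambda,t)=t\,F_2(\lambda,t)$. Differentiating this identity $k$ times in $t$ via Leibniz and setting $t=0$ yields the matching relations $\partial_t^{\,k-1}F_2(\lambda,0)=\tfrac1k\,\partial_t^{\,k}\Phi(\lambda,0)$. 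This is the organizing principle of the whole proof and is precisely what produces the coefficients $\tfrac12$ and $\tfrac13$ in the statement.

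Next I would expand $\Phi$ in $t$ along the curve $u_\lambda(t)=tv_m+\varphi(\lambda,tv_m)$ by the chain rule, recording $u_\lambda(0)=0$, $u_\lambda'(0)=v_m+\partial_g\varphi(\lambda_m,0)v_m=v_m$ by Lemma \ref{platit1}, and $u_\lambda''(0)=\widetilde v_m$. At second order one gets
$$
\partial_t^2\Phi(\lambda_m,0)=Q\partial_{ff}G(\lambda_m,0)[v_m,v_m]+Q\partial_fG(\lambda_m,0)\widetilde v_m,
$$
whose last term drops by \eqref{annul}; multiplying by $\tfrac12$ gives the $\partial_tF_2$ formula of (1), and since $\varphi$ entered only through the annihilated term this equals $\tfrac{d^2}{dt^2}QG(\lambda_m,tv_m)_{|t=0}$. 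The same computation at third order gives $\partial_t^3\Phi(\lambda_m,0)=\tfrac{d^3}{dt^3}QG(\lambda_m,tv_m)_{|t=0}+3\,Q\partial_{ff}G(\lambda_m,0)[v_m,\widetilde v_m]$ (the $u_\lambda'''$ contribution again dying by \eqref{annul}), and multiplication by $\tfrac13$ is exactly (2). The vector $\widetilde v_m$ itself is pinned down by differentiating the constraint $(\mathrm{Id}-Q)G(\lambda_m,u_{\lambda_m}(t))=0$ of \eqref{th11} twice in $t$ and inverting $\partial_fG(\lambda_m,0)$ on the complement $\mathcal X_m$, which is legitimate by \eqref{Inv}; the identification $Q\partial_{ff}G[v_m,\widetilde v_m]=Q\partial_t\partial_sG(\lambda_m,tv_m+s\widetilde v_m)_{|t=0,s=0}$ is just the reading of the bilinear second derivative as a mixed partial.

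For the $\lambda$-derivatives I would differentiate the expressions already obtained, namely $F_2(\lambda,0)=Q\partial_fG(\lambda,0)\big(v_m+\partial_g\varphi(\lambda,0)v_m\big)$ and $\partial_tF_2(\lambda,0)=\tfrac12\partial_t^2\Phi(\lambda,0)$. One $\lambda$-derivative of the former at $\lambda_m$, using $\partial_g\varphi(\lambda_m,0)v_m=0$ and discarding the factor $Q\partial_fG(\lambda_m,0)$ via \eqref{annul}, gives the first formula of (1). For (3) I differentiate $\partial_t^2\Phi(\lambda,0)$ in $\lambda$: the symmetry of $\partial_{ff}G$ merges the two cross terms into a factor $2$, \eqref{annul} kills the term carrying $Q\partial_fG(\lambda_m,0)$, and the factor $\tfrac12$ reproduces the three terms of (3); the auxiliary value of $\partial_\lambda\partial_g\varphi(\lambda_m,0)v_m$ comes from differentiating the $t$-derivative of \eqref{th11} at $t=0$ in $\lambda$ and inverting $\partial_fG(\lambda_m,0)$ on $\mathcal X_m$ as before. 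Finally (4) is the second $\lambda$-derivative of $F_2(\lambda,0)$: of the three resulting terms, the one with $Q\partial_fG(\lambda_m,0)$ vanishes by \eqref{annul}, the one with $Q\partial_{\lambda\lambda}\partial_fG(\lambda_m,0)$ vanishes because $G$ is affine in $\lambda$, so $\partial_{\lambda\lambda}G\equiv0$ (the hypothesis also invoked in Lemma \ref{platit1}), and the surviving cross term reproduces the claimed expression once $\partial_\lambda\partial_g\varphi(\lambda_m,0)v_m$ is replaced by its value from (3).

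I expect the main obstacle to be purely organizational rather than conceptual: controlling the Fa\`a di Bruno bookkeeping for $\partial_t^3\Phi$ and $\partial_\lambda\partial_t^2\Phi$, correctly counting the multiplicities of the composite-derivative terms, and tracking which of them are multiplied by the annihilated factor $Q\partial_fG(\lambda_m,0)$ versus which survive. Once the reduction $\Phi=tF_2$ is in place the content is light; the only real risk is miscounting a multiplicity in the chain rule or mismatching the splittings $X_m=\langle v_m\rangle\oplus\mathcal X_m$ and $Y_m=\langle\mathbb W_m\rangle\oplus\mathcal Y_m$ when inverting $\partial_fG(\lambda_m,0)$, together with justifying the differentiations under the integral sign in \eqref{Bifur1} using the smoothness of $G$ and $\varphi$.
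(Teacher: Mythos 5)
Your proof is correct and is essentially the paper's own proof in lightly repackaged form: the identity $\Phi(\lambda,t)=t\,F_2(\lambda,t)$ is exactly the fundamental-theorem-of-calculus content of the integral representation \eqref{Bifur1} (the weights $\int_0^1 s\,ds=\tfrac12$ and $\int_0^1 s^2\,ds=\tfrac13$ there are your factors $\tfrac1k$), and all the substantive inputs --- Lemma \ref{platit1}, the annihilation \eqref{annul}, differentiating the constraint \eqref{th11} in $t$ and $\lambda$, and inverting $\partial_fG(\lambda_m,0)$ on $\mathcal{X}_m$ via \eqref{Inv} --- are deployed identically. One small point in your favor: in item (4) you explicitly justify discarding the term $Q\partial_{\lambda\lambda}\partial_fG(\lambda_m,0)v_m$ by noting $\partial_{\lambda\lambda}G\equiv0$ (since $G$ is affine in $\lambda$ in the application), a step the paper performs silently under the weaker-looking pointwise hypothesis $\partial_{\lambda\lambda}G(\lambda_m,0)=0$.
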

\begin{remark}
The loss of transversality which has been  expressed by the identity \eqref{lostra}  should be combined with the formulae of Proposition \ref{platit2} in order to obtain the desired result in the degenerate bifurcation. 

\end{remark}

\begin{proof}

${\bf{(1)}}$ Differentiating $F_2$ given by \eqref{Bifur1} with respect to $\lambda$ gives 
\begin{eqnarray*}
\partial_{\lambda}F_2\big(\lambda_m,0\big)&=&Q \partial_{ff}G\big(\lambda_m,0\big)\big[\partial_\lambda\varphi(\lambda_m,0),\big(v_m+\partial_g\varphi(\lambda_m,0)v_m\big)\big]\\
&+&Q\partial_\lambda\partial_f G\big(\lambda_m,0\big)\big(v_m+\partial_g\varphi(\lambda_m,0)v_m\big)\\
&+&Q\partial_fG\big(\lambda_m,0\big)\partial_{\lambda}\partial_g\varphi(\lambda_m,0)v_m.
\end{eqnarray*}
Using Lemma \ref{platit1} we get
$$
\partial_{\lambda}F_2\big(\lambda_m,0\big)=Q\partial_\lambda \partial_fG\big(\lambda_m,0\big)v_m+Q\partial_fG\big(\lambda_m,0\big)\partial_{\lambda}\partial_g\varphi(\lambda_m,0)v_m.
$$
Combined with \eqref{annul} we entail that 
$$
\partial_{\lambda} F_2\big(\lambda_m,0\big)=Q\partial_\lambda \partial_fG\big(\lambda_m,0\big)v_m.
$$
For the second identity we differentiate \eqref{Bifur1} with respect to $t$, then we get after  straightforward computations
\begin{eqnarray}\label{first1}
\nonumber\partial_t F_2(\lambda_m, t)&=&\int_0^1 s Q\partial_{ff} G\big(\lambda_m, st v_m+\varphi(\lambda_m, st v_m)\big)\big[z_m(t,s), z_m(t,s)\big]ds\\
&+&\int_0^1 s Q\partial_{f} G\big(\lambda_m, st v_m+\varphi(\lambda_m, st v_m)\big)\partial_{gg}\varphi(\lambda_m, st v_m)\big[v_m, v_m\big]ds\
\end{eqnarray}
with the notation
$$
z_m(t,s)\triangleq v_m+(\partial_g\varphi)(\lambda_m, st v_m) v_m.
$$
Using  Lemma \ref{platit1}  we deduce that 
$$2\partial_{t} F_2\big(\lambda_m,0\big)=  Q \partial_{ff}G\big(\lambda_m,0\big)\big[v_m, v_m\big]
+Q\partial_fG\big(\lambda_m,0\big)\partial_{gg}\varphi(\lambda_m,0)\big[v_m,v_m\big] \big).
$$
Thus we obtain by virtue of \eqref{annul}
$$
2\partial_{t} F_2\big(\lambda_m,0\big)=Q \partial_{ff}G\big(\lambda_m,0\big)\big[v_m, v_m\big].
$$
Now it is easy to check   by differentiating the function $t\mapsto QG(\lambda_m, tv_m)$  twice at $t=0$ that
$$
2\partial_{t} F_2\big(\lambda_m,t\big)=\frac{d^2}{dt^2}Q G\big(\lambda_m,tv_m\big)_{|t=0}.
$$
\\
\\
${\bf{(2)}}$ Differentiating  \eqref{first1} with respect to $t$ we get
\begin{eqnarray*}
\partial_{tt} F_2(\lambda_m, t)&=&\int_0^1 s^2 Q\partial_{fff} G\big(\lambda_m,st v_m+\varphi(\lambda_m, st v_m)\big)\big[z_m(t,s),z_m(t,s), z_m(t,s)\big]ds\\
&+&3\int_0^1 s^2 Q\partial_{ff} G\big(\lambda_m,st v_m+\varphi(\lambda_m, st v_m)\big)\Big[z_m(t,s), \partial_{gg}\varphi(\lambda_m, st v_m) \big[v_m,v_m]\Big]ds\\
&+&\int_0^1 s^2 Q\partial_{f} G\big(\lambda_m,st v_m+\varphi(\lambda_m, st v_m)\big) \partial_{ggg}\varphi(\lambda_m, st v_m)\big[v_m, v_m, v_m\big]ds.
\end{eqnarray*}
Using  Lemma \ref{platit1}, \eqref{annul} and \eqref{zero1}  we get
\begin{eqnarray*}
\partial_{tt} F_2(\lambda_m, 0)&=&\frac13\partial_{fff} QG\big(\lambda_m,0\big)\big[v_m,v_m,v_m\big]\\
&+&Q\partial_{ff} G\big(\lambda_m,0\big)\Big[v_m, \partial_{gg}\varphi(\lambda_m,0)\big[v_m,v_m\big]\Big].
\end{eqnarray*}
By setting $\widetilde{v}_m\triangleq \partial_{gg}\varphi(\lambda_m,0)\big[v_m,v_m\big]$ the preceding identity can be written in the form
\begin{eqnarray*}
\partial_{tt} F_2(\lambda_m, 0)&=&\frac13\frac{d^3}{dt^3}QG\big(\lambda_m,tv_m\big)_{|t=0}+Q\partial_{ff} G\big(\lambda_m,0\big)\big[v_m, \widetilde{v}_m\big]\\
&=&\frac13\frac{d^3}{dt^3}QG\big(\lambda_m,tv_m\big)_{|t=0}+\partial_t\partial_sQG\big(\lambda_m,tv_m+s\widetilde{v}_m\big)_{|t=0,s=0}.
\end{eqnarray*}
It remains to compute $\widetilde{v}_m$. Differentiating  the equation \eqref{th11} twice with respect to $t$, we obtain
$$
(\hbox{Id}-Q)\partial_{ff}G(\lambda_m, 0)\big[v_m,v_m\big]+(\hbox{Id}-Q)\partial_f G(\lambda_m, 0)\widetilde{v}_m=0.
$$
Since  $(\hbox{Id}-Q)\partial_f G(\lambda_m, 0): \mathcal{X}_m\to\mathcal{Y}_m$ is invertible then we get in view of \eqref{annul} 
\begin{eqnarray*}
\widetilde{v}_m&=&-\Big[\partial_f G(\lambda_m, 0)\Big]^{-1} (\hbox{Id}-Q)\partial_{ff}G(\lambda_m, 0)\big[v_m,v_m\big]\\
&=&-\Big[\partial_f G(\lambda_m, 0)\Big]^{-1} \frac{d^2}{dt^2}(\hbox{Id}-Q)G(\lambda_m, tv_m)_{|t=0}
\end{eqnarray*}
which is the desired identity for $\partial_{tt}F_2(\lambda_m,0)$.
\\
\\
${\bf{(3)}}$ To compute $\partial_t\partial_\lambda F_2(\lambda_m,0)$ we proceed as before with direct computations combined with Lemma \ref{platit1} 

\begin{eqnarray*}
\partial_t\partial_\lambda F_2(\lambda_m,0)&=&\frac12\partial_\lambda\partial_{ff}QG(\lambda_m,0)[v_m,v_m]+\frac12Q\partial_\lambda\partial_f G(\lambda_m,0)\partial_{gg}\varphi(\lambda_m,0)[v_m,v_m]\\
&+&Q\partial_{ff}G(\lambda_m,0)\big[v_m,\partial_\lambda\partial_g\varphi(\lambda_m,0)v_m\big]\\
&=&\frac12\partial_\lambda\partial_{ff}QG(\lambda_m,0)[v_m,v_m]+\frac12Q\partial_\lambda\partial_f G(\lambda_m,0)\widetilde{v}_m\\
&+&Q\partial_{ff}G(\lambda_m,0)\big[v_m,\partial_\lambda\partial_g\varphi(\lambda_m,0)v_m\big].
\end{eqnarray*}
Differentiating \eqref{th11} respectively with respect to $t$ and $\lambda$ we get
$$
(\hbox{Id}-Q)\partial_{\lambda}\partial_fG(\lambda_m,0)v_m+(\hbox{Id}-Q){\partial_f}G(\lambda_m,0)\partial_{\lambda}\partial_g\varphi(\lambda_m,0)v_m=0.
$$
Since  the restricted operator ${\partial_f}G(\lambda_m,0): \mathcal{X}_m\to \mathcal{Y}_m$ is invertible then
\begin{equation}\label{formzy}
\partial_{\lambda}\partial_g\varphi(\lambda_m,0)v_m=-\Big[{\partial_f}G(\lambda_m,0)\Big]^{-1}(\hbox{Id}-Q)\partial_{\lambda}\partial_fG(\lambda_m,0)v_m.
\end{equation}
This achieves the proof of the desired identity.\\
\\
${(\bf{4})}$
Straightforward computations combined with the first part of  Lemma \ref{platit1} yield
\begin{eqnarray*}
\partial_{\lambda\lambda}F_2(\lambda_m,0)&=&Q\partial_{\lambda\lambda}\partial_fG(\lambda_m,0)v_m+2Q\partial_{\lambda}\partial_f G(\lambda_m,0)\partial_{\lambda}\partial_g\varphi(\lambda_m,0)v_m\\
&+&Q\partial_{ff}G(\lambda_m,0)[\partial_{\lambda\lambda}\varphi(\lambda_m,0),v_m]+Q\partial_fG(\lambda_m,0)\partial_{\lambda\lambda}\partial_g\varphi(\lambda_m,0)v_m.
\end{eqnarray*}
Using the second part of Lemma  \ref{platit1} where we need the restriction $\partial_{\lambda\lambda} G(\lambda_m,0)=0$ combined with \eqref{annul} we deduce
\begin{equation*}
\partial_{\lambda\lambda}F_2(\lambda_m,0)=2Q\partial_{\lambda}\partial_f G(\lambda_m,0)\partial_{\lambda}\partial_g\varphi(\lambda_m,0)v_m.
\end{equation*}
It suffices now to use \eqref{formzy} and the desired identity follows.

\end{proof}
\section{Jacobian and Hessian computation }\label{Sec44}
As we have already mentioned in Section \ref{Sec2} the existence of nontrivial solutions to the equation \eqref{V-stat11} in a small neighborhood of  the point $(\lambda_m,0)$ defined in \eqref{lambda6}  is equivalent to solving the finite-dimensional equation \eqref{Bifur1} :
$$
F_2(\lambda,t)=0,\quad (\lambda,t)\quad \hbox{close}\quad (\lambda_m,0).
$$
We shall be able to decide from the quadratic form associated to $F_2$ whether or not the function $F_2$ admits nontrivial solutions and  prove the Main theorem. We shall establish that  the point  $(\lambda_m,0)$ is a critical point for  $F_2$ and the Hessian is a diagonal matrix  whose eigenvalues change the sign with respect  to the frequency $m.$  Proposition \ref{platit2} stresses   the  complexity of the general formulae for the first and second derivatives and  naturally the computations will be very long. Thus we shall take  special care  in the computations and give  the required details.   
The main result of this section reads as follows.
\begin{proposition}\label{explicitval}
Let $(m,b)\in \widehat{\mathbb{S}}$ which was defined in  \eqref{dim11}. Then the following assertions hold true.
\begin{enumerate}
\item First derivatives: we have
$$
\partial_t F_2(\lambda_m,0)=\partial_\lambda F_2(\lambda_m,0)=0.
$$
\item Expression of $\partial_{\lambda\lambda}F_2(\lambda_m,0).$
$$
\partial_{\lambda\lambda}F_2(\lambda_m,0)= \sqrt{2}m^2b^{1-m} \, \mathbb{W}_m.
$$
\item Expression of $\partial_{t}\partial_{\lambda}F_2(\lambda_m,0).$
$$
\partial_{t}\partial_{\lambda}F_2(\lambda_m,0)=0.
$$
\item Expression of $\partial_{tt}F_2(\lambda_m,0).$

\begin{equation*}
 \partial_{tt}F_2(\lambda_m,0)=\left\{ \begin{array}{ll}
-\frac{\sqrt{2}(1-b^2)^2}{b}\,\mathbb{W}_2, \quad \hbox{if}\quad m=2,\\
\widehat{\alpha}_m\,\mathbb{W}_m, \quad \hbox{if}\quad m\geq3,
\end{array} \right.
\end{equation*}
where
$$
\widehat{\alpha}_m=\frac{1}{\sqrt{2}}\Big[ m(m-1)\frac{(1-b^2)^2}{b}+{\widehat{\beta}_m \widehat{\gamma}_m}\Big]
$$

with
$$
\widehat{\beta}_m =\frac{2m(b^2+b^{m})^2}{ (b^m+1)^2(-b^{2m}+2b^m+1)}
$$
and 
$$\widehat{\gamma}_m=(m-2)b+mb^{2m-1}+(4m-6)b^{m+1}
+4(m-1)b^{2m+1}+2mb^{3m-1}.
$$

\end{enumerate}

\end{proposition}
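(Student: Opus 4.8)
The plan is to substitute the explicit V-states data into the abstract formulae of Proposition \ref{platit2}. Two structural facts drive everything. First, the functional $G$ in \eqref{G_j} is affine in $\lambda$, since $\lambda$ enters only through the factor $(1-\lambda)\overline{\phi_j}$; hence $\partial_{\lambda\lambda}G(\lambda_m,0)=0$ and the hypothesis of Proposition \ref{platit2}-(4) is automatically satisfied. Second, the linearization is a matrix Fourier multiplier, so differentiating \eqref{Matr1} gives the clean formula $\partial_\lambda M_n(\lambda)=n\,\mathrm{diag}(1,b)$; consequently $\partial_\lambda\partial_f G(\lambda_m,0)$ acts on the mode $\overline{w}^{\,nm-1}$ by the matrix $nm\,\mathrm{diag}(1,b)$. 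Throughout I will exploit the one-dimensional kernel vector $v_m$ of \eqref{kernexp}, the explicit projection $Q$ of \eqref{project1}, and the explicit inverse $[\partial_f G(\lambda_m,0)]^{-1}$ recorded in \eqref{Inv}.

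For part (1), the vanishing of $\partial_\lambda F_2(\lambda_m,0)$ is quick: by Proposition \ref{platit2}-(1) and the multiplier computation one finds $\partial_\lambda\partial_f G(\lambda_m,0)v_m = mb\,(\varepsilon,1)^{T}e_m$, and its first Fourier coefficient is orthogonal to $\widehat{\mathbb{W}}=\tfrac{1}{\sqrt2}(1,-\varepsilon)^{T}$, so the projection \eqref{project1} annihilates it. This is precisely the loss of transversality \eqref{lostra}. The vanishing of $\partial_t F_2(\lambda_m,0)=\tfrac12 Q\partial_{ff}G(\lambda_m,0)[v_m,v_m]$ is less automatic and requires the genuine second-order expansion of $G$ along the kernel. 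Here I would expand both the affine term $(1-\lambda)\overline{\phi_j}\,w\phi_j'$ and, crucially, the Cauchy integral $I(\phi_j)$ to order $t^{2}$, extract the $e_m$-harmonic by the residue theorem, and check that its $\widehat{\mathbb{W}}$-component cancels. Frequency counting alone does not settle this, because the second-order expansion of $I$ feeds the $e_m$-mode, so a real cancellation must be verified.

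Part (2) is then a purely algebraic chain through Proposition \ref{platit2}-(4). Since the $Q$-component of $\partial_\lambda\partial_f G(\lambda_m,0)v_m=mb\,(\varepsilon,1)^{T}e_m$ vanishes and $(\varepsilon,1)^{T}$ is exactly the direction spanning the range $\mathcal{Y}_m$ in \eqref{fs10}, the factor $(\mathrm{Id}-Q)$ acts as the identity on this vector. Inverting via \eqref{Inv} (with $\beta=mb$, hence $\alpha=-mb^{1-m}$) turns it into $-mb^{1-m}(1,0)^{T}\overline{w}^{\,m-1}$; applying $\partial_\lambda\partial_f G(\lambda_m,0)$ once more gives $-m^{2}b^{1-m}(1,0)^{T}e_m$; projecting with $Q$ yields $-\tfrac{m^{2}b^{1-m}}{\sqrt2}\mathbb{W}_m$, and the overall factor $-2$ produces $\partial_{\lambda\lambda}F_2(\lambda_m,0)=\sqrt2\,m^{2}b^{1-m}\,\mathbb{W}_m$, as claimed. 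This is where the positivity $a_m>0$ announced in the introduction originates.

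Parts (3) and (4) carry essentially all the labour. For (3) I would assemble the three terms of Proposition \ref{platit2}-(3): the term $\tfrac12 Q\partial_\lambda\partial_{ff}G(\lambda_m,0)[v_m,v_m]$, which is nonzero because the quadratic part of $G$ still carries a $\lambda$-factor; the correction $Q\partial_{ff}G(\lambda_m,0)[\partial_\lambda\partial_g\varphi\,v_m,v_m]$ with $\partial_\lambda\partial_g\varphi\,v_m$ read off from \eqref{formzy}; and $\tfrac12 Q\partial_\lambda\partial_f G(\lambda_m,0)\widetilde{v}_m$. After extracting the $e_m$-coefficient, these are to cancel, giving $\partial_t\partial_\lambda F_2(\lambda_m,0)=0$ and hence a diagonal Hessian. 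For (4) I would use $\partial_{tt}F_2(\lambda_m,0)=\tfrac13\frac{d^{3}}{dt^{3}}QG(\lambda_m,tv_m)_{|t=0}+Q\partial_{ff}G(\lambda_m,0)[v_m,\widetilde{v}_m]$, which needs both the cubic Taylor coefficient of $G$ along $v_m$ and the auxiliary vector $\widetilde{v}_m$, the latter obtained by taking the quadratic expansion of $(\mathrm{Id}-Q)G(\lambda_m,tv_m)$, projecting off the first harmonic, and inverting each matrix $M_{nm}(\lambda_m)$ mode by mode. The main obstacle is exactly this last step: the cubic expansion of the contour-dynamics functional generates several harmonics whose residues must be tracked with care, and $\widetilde{v}_m$ reinjects higher modes into the $e_m$-coefficient through $\partial_{ff}G$; organising these residue computations so that the $m=2$ and $m\geq3$ cases separate into the stated closed forms (in particular producing the auxiliary coefficients $\widehat{\beta}_m$ and $\widehat{\gamma}_m$, and the crucial sign change $b_2<0<b_m$ for $m\geq3$) is the heart of the matter.
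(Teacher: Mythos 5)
Your framework is exactly the paper's: substitute the multiplier structure \eqref{Matr1} into the abstract formulae of Proposition \ref{platit2}, exploit \eqref{lostra}, the explicit projection \eqref{project1} and the inverse \eqref{Inv}. Your part (2) is carried out in full and agrees step for step with the paper, including $\partial_\lambda\partial_f G(\lambda_m,0)v_m = mb\,(\varepsilon,1)^{T}e_m$, its orthogonality to $\widehat{\mathbb{W}}$ (which is \eqref{lostra}), the inversion giving $-mb^{1-m}(1,0)^{T}\overline{w}^{m-1}$, and the final value $\sqrt{2}\,m^{2}b^{1-m}\mathbb{W}_m$; likewise the remark that $G$ is affine in $\lambda$, so $\partial_{\lambda\lambda}G=0$, is precisely what licenses Proposition \ref{platit2}-(4).

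However, there is a genuine gap in the remaining parts, and it is structural, not just a matter of unexecuted computation. You assert that ``the second-order expansion of $I$ feeds the $e_m$-mode, so a real cancellation must be verified'' (part (1)), and that $\tfrac12 Q\partial_\lambda\partial_{ff}G(\lambda_m,0)[v_m,v_m]$ ``is nonzero'' (part (3)). Both claims are false, and the paper's proof rests on exactly the opposite mechanism: by the scaling/homogeneity argument, every quantity quadratic in perturbations along the mode $\overline{w}^{m-1}$ lies only in frequency $0$ (with \emph{real} coefficient, hence killed by $\mathrm{Im}$) and frequency $2m$. Concretely, \eqref{Sec-first} shows $\frac{d^2}{dt^2}G(\lambda_m,tv_m)_{|t=0}=2m(b^{2}-\varepsilon b^{m})^{2}(1,0)^{T}e_{2m}$ --- there is no $e_m$-component at all, so $Q$ annihilates it outright and $\partial_t F_2(\lambda_m,0)=0$ follows without any cancellation inside the $e_m$-mode; similarly $\partial_\lambda\partial_{ff}G_j(\lambda_m,0)[v_m,v_m]=-\frac{d^{2}}{dt^{2}}\mathrm{Im}\{\overline{\phi_j}\,w\,\phi_j'\}_{|t=0}$ vanishes identically because the $t^{2}$-coefficient $(1-m)\alpha_j^{2}\,w^{m-1}w\,\overline{w}^{m}$ is a real constant, and the other two terms in part (3) die under $Q$ \emph{separately} (Lemma \ref{lemexp}), since $\widetilde{v}_m\propto\overline{w}^{2m-1}$ pushes both $\partial_\lambda\partial_f G\,\widetilde{v}_m$ and the mixed bilinear terms into the $e_{2m}$-harmonic. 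Expecting a nontrivial cancellation where the terms are individually absent from the range of $Q$ means the organizing principle of the whole Hessian computation has been missed. Beyond this, your parts (3) and (4) remain programs rather than proofs: the proposition \emph{is} the explicit constants, and you never produce them --- in particular you do not identify the pivotal degeneracy that drives the dichotomy, namely that $\widehat{\alpha}=2m(b^{2}-\varepsilon b^{m})^{2}$ vanishes for $m=2$ (where $\varepsilon=1$), forcing $\widetilde{v}_2=0$ and reducing $\partial_{tt}F_2(\lambda_2,0)$ to the pure cubic term of Proposition \ref{prop77}, while for $m\geq3$ one must invert $M_{2m}(\lambda_m)$ using the relation \eqref{relat1} to obtain $\widehat{\beta}_m$ and then run the residue calculus of Proposition \ref{lem76} to get $\widehat{\gamma}_m$. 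As it stands, the proposal proves (1) for the $\lambda$-derivative and (2), but not the $t$-derivative of (1) nor (3)--(4).
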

The proof is quite involved and falls naturally  into several steps.

\subsection{Preparation}
The basic tools and notation that we shall deal with in this part were previously introduced in Section \ref{Sec2} and for the clarity we refer the reader to this section. 
Further  notation will also be needed :
\begin{equation}\label{Notqy1}
b_1=1, b_2=b, \alpha_1=\varepsilon b, \alpha_2=1.
\end{equation}
By introducing the functions :
\begin{equation}\label{phi_9}
\phi_j(t,w)=b_j w+t\alpha_j \overline{w}^{m-1}.
\end{equation}
one can rewrite  the functions of  \eqref{G_j} as follows\begin{equation}\label{G59}
G_j(\lambda_m, tv_m)=\hbox{Im}\Bigg\{\Big[(1-\lambda_m)\overline{\phi_j(t,w)}+I(\phi_j(w))\Big] w\Big(b_j+t(1-m)\alpha_j \overline{w}^m\Big)\Bigg\},
\end{equation}
with
$$
I(\phi_j(t,w))=I_1(\phi_j(t,w))-I_2(\phi_j(t,w))
$$
and 
$$ I_i(\phi_j(t,w))=\fint_{\mathbb{T}}
\frac{\overline{\phi_j(t,w)}-\overline{\phi_i(t,\tau)}}{\phi_j(t,w)-\phi_i(t,\tau)}\phi'_i(t,\tau)d\tau.
$$
According to Proposition \ref{platit2} the quantities $\frac{d^k}{dt^k} G(\lambda_m, tv_m)_{|t=0}$  for $k=2,3$ play central role on the structure of  Taylor expansion of $F_2$ at the second order.  Their calculations are fairly complicated and as we shall see throughout  the next sections 
we will be able to  give explicit expressions.
\subsection{Jacobian computation}
In this section we aim to  prove the first point $(1)$ of Proposition \ref{explicitval} which merely says that the point $(\lambda_m,0)$ is a critical point for the function $F_2$ defined in \eqref{Bifur1}. In other words,
$$
\partial_\lambda F_2(\lambda_m,0)=\partial_t F_2(\lambda_m,0)=0.
$$
The first one is an immediate consequence  of Proposition \ref{platit2}-$(1)$ combined with the loss of transversality written in the form \eqref{lostra}.  In regard to  the second identity we shall use once again   Proposition  \ref{platit2}-$(1)$
$$
\partial_t F_2(\lambda_m,0)=\frac12\frac{d^2}{dt^2}QG(\lambda_m, tv_m)_{|t=0}.
$$
The basic ingredient is the following identity whose  explicit formula will be proved and used again later,
\begin{eqnarray}\label{Sec-first}
\nonumber \frac{d^2}{dt^2} G(\lambda_m, tv_m)_{|t=0}&=&2m\left( \begin{array}{c}
\big( b^2-\varepsilon b^{m}\big)^2 \\
0
\end{array} \right) \, e_{2m}\\
&\triangleq&\left( \begin{array}{c}
 \widehat{\alpha} \\
0
\end{array} \right)e_{2m}.
\end{eqnarray}
Assume for a while this expression, then  we get by means of  the structure of the projection $Q$ introduced in \eqref{project1}
\begin{equation}\label{tit45}
\frac{d^2}{dt^2} Q G(\lambda_m, tv_m)_{|t=0}=0
\end{equation}
and consequently
$$
\partial_tF_2(\lambda_m,0)=0.
$$
Now we shall turn to the proof of \eqref{Sec-first}. Before proving that it is useful to introduce   some  notation. Set
\begin{equation}\label{TQ1}
A=b_j w-b_i\tau, \quad B=\alpha_j\overline{w}^{m-1}-\alpha_i\overline{\tau}^{m-1}\quad C=\alpha_i(1-m)\overline{\tau}^{m}
\end{equation}
then we may write 
$$
I_i(\phi_j(t,w))=\fint_{\mathbb{T}}\frac{\overline{A}+ t\overline{B}}{A+tB}\big(b_i+ tC\big) d\tau,
$$
with $\phi_j$ being the function already defined in \eqref{phi_9}.
 It is not so hard to check that
\begin{eqnarray}\label{dzr22}
\nonumber\frac{d^2}{dt^2} G_j(\lambda_m, tv_m)_{|t=0}&=&\hbox{Im}\Big\{2(1-\lambda_m)\alpha_j^2(1-m)+b_j w\frac{d^2}{dt^2} I(\phi_j(t))_{|t=0}\\
\nonumber &+&2(1-m)\alpha_j \overline{w}^{m-1}\frac{d}{dt} I(\phi_j(t))_{|t=0}\Big\}\\
&=&\hbox{Im}\Big\{b_j w\frac{d^2}{dt^2} I(\phi_j(t))_{|t=0}+2(1-m)\alpha_j \overline{w}^{m-1}\frac{d}{dt} I(\phi_j(t))_{|t=0}\Big\}.
\end{eqnarray}
The next task is  to compute each term appearing in the right-hand side of  the preceding identity.\\
\\
{\bf Computation of $\frac{d}{dt} I(\phi_j(t))_{|t=0}.$} By the usual rules of the derivation we can check easily that
$$
\frac{d}{dt}(I_i(\phi_j(t,\cdot))_{|t=0} =\fint_{\mathbb{T}}\frac{\overline{A} }{A^2}\Big( AC-b_i B\Big) d\tau+b_i\fint_{\mathbb{T}}\frac{\overline{B}}{A}d\tau.
$$
It is convenient at this stage to make some scaling arguments which are frequently used later.
By homogeneity argument that we can implement by change of variables we find  real constants $\mu_{ij},\gamma_{ij}$ such that
$$
\fint_{\mathbb{T}}\frac{\overline{B}}{A}d\tau=\mu_{i,j} w^{m-1}
$$
and
$$
\fint_{\mathbb{T}}\frac{\overline{A} }{A^2}\Big( AC-b_i B\Big) d\tau=\gamma_{ij} \overline{w}^{m+1},
$$
with the expression
$$
\gamma_{ij}=\alpha_i(1-m)\fint_{\mathbb{T}}\frac{b_j-b_i\overline{\tau}}{b_j -b_i\tau}\overline{\tau}^m d\tau-b_i\fint_{\mathbb{T}}\frac{b_j-b_i\overline{\tau}}{(b_j -b_i\tau)^2}\big(\alpha_j-\alpha_i\overline{\tau}^{m-1}\big) d\tau.
$$
Coming back to  the notation \eqref{Notqy1} we may write
$$
\gamma_{12}=\alpha_1(1-m)\fint_{\mathbb{T}}\frac{b-\overline{\tau}}{b -\tau}\overline{\tau}^m d\tau-b_1\fint_{\mathbb{T}}\frac{b-\overline{\tau}}{(b -\tau)^2}\big(\alpha_2-\alpha_1\overline{\tau}^{m-1}\big) d\tau.
$$
Since  the integrands decay quickly more than $\frac{1}{\tau^2}$ the applying   residue theorem at $\infty$ we get,
\begin{equation}\label{merc1}
\gamma_{12}=0.
\end{equation}
Again with \eqref{Notqy1} we write 
$$
\gamma_{21}=(1-m)\fint_{\mathbb{T}}\frac{1-b\overline{\tau}}{1-b\tau}\overline{\tau}^m d\tau-b\fint_{\mathbb{T}}\frac{1-b\overline{\tau}}{(1-b\tau)^2}\big(\varepsilon b-\overline{\tau}^{m-1}\big) d\tau.
$$
Next we shall make use of the following  identities whose proofs  can be easily obtained through  residue theorem : For $m\in \NN^\star$
\begin{eqnarray}\label{form76}
\nonumber \fint_{\mathbb{T}}\frac{\overline{\tau}^m}{1-b\tau}d\tau&=&\fint_{\mathbb{T}}\frac{{\tau}^{m-1}}{\tau-b}d\tau\\
&=& b^{m-1}
\end{eqnarray}
and 
\begin{eqnarray}\label{form706}
\nonumber  \fint_{\mathbb{T}}\frac{\overline{\tau}^m}{(1-b\tau)^2}d\tau&=& \fint_{\mathbb{T}}\frac{{\tau}^{m}}{(\tau-b)^2}d\tau\\
 &=& m\,b^{m-1}.
\end{eqnarray}
Consequently we find
\begin{eqnarray}\label{merc2}
\nonumber \gamma_{21}&=&(1-m)\big(b^{m-1}- b^{m+1}\big)-b\big(-(m-1) b^{m-2}-\varepsilon b^2+m b^{m}\big)\\
&=&\varepsilon b^3-b^{m+1}.
\end{eqnarray}
For the diagonal terms $i=j$, we write by the definition
$$
\gamma_{ii}=\alpha_i(1-m)\fint_{\mathbb{T}}\frac{1-\overline{\tau}}{1-\tau}\overline{\tau}^m d\tau-\alpha_i\fint_{\mathbb{T}}\frac{1-\overline{\tau}}{(1-\tau)^2}\big(1-\overline{\tau}^{m-1}\big) d\tau.
$$
Observe that the  integrands are extended holomorphically to $\CC^\star$ and therefore by using residue theorem at $\infty$ we get
\begin{equation}\label{merc3}
\gamma_{ii}=0.
\end{equation}
Putting together  \eqref{merc1},\eqref{merc2} and \eqref{merc3} we infer
\begin{eqnarray}\label{merc5}
\nonumber\frac{d}{dt}I(\phi_1(t,w))_{|t=0}&=&\frac{d}{dt}I_1(\phi_1(t,w))_{|t=0}-\frac{d}{dt}I_2(\phi_1(t,w))_{|t=0}\\
\nonumber&=&\big(\gamma_{11}-\gamma_{21}\big)\overline{w}^{m+1}+(\mu_{11}-b\mu_{21}) w^{m-1}\\
&=&(b^{m+1}-\varepsilon b^3)\overline{w}^{m+1}+(\mu_{11}-b\mu_{21}) w^{m-1}.
\end{eqnarray}
Proceeding in a similar way to \eqref{merc5}   we obtain
\begin{eqnarray}\label{merc45}
\nonumber\frac{d}{dt}I(\phi_2(t,w))_{|t=0}&=&\frac{d}{dt}I_1(\phi_2(t,w))_{|t=0}-\frac{d}{dt}I_2(\phi_2(t,w))_{|t=0}\\
\nonumber&=&\big(\gamma_{12}-\gamma_{22}\big)\overline{w}^{m+1}+(\mu_{12}-b\mu_{22}) w^{m-1}\\
&=&(\mu_{12}-b\mu_{22}) w^{m-1}.
\end{eqnarray}
\\
{\bf Computation of $\frac{d^2}{dt^2} I(\phi_j(t))_{|t=0}.$} Straightforward computations yield
$$
\frac{d^2}{dt^2}(I_i(\phi_j(t,\cdot))_{|t=0} )=2\fint_{\mathbb{T}}\frac{A\overline{B}-\overline{A} B}{A^3}\Big( AC-b_i B\Big) d\tau.
$$
By scaling one  finds real constants $\widehat\mu_{i,j}$ and $\eta_{i,j}$ such that
$$
\fint_{\mathbb{T}}\frac{\overline{B}}{A^2}\Big( AC-b_i B\Big) d\tau
=\widehat\mu_{i,j}\overline{w}, \quad \fint_{\mathbb{T}}\frac{\overline{A} B}{A^3}\Big( b_i B-AC\Big) d\tau
=\eta_{i,j}\overline{w}^{2m+1},
$$
with 
$$
\widehat\mu_{i,j}=\fint_{\mathbb{T}}\frac{\alpha_j -\alpha_i\tau^{m-1}}{(b_j-b_i\tau)^2}\Big(\alpha_i(1-m)(b_j-b_i\tau)\overline{\tau}^m-b_i(\alpha_j-\alpha_i\overline{\tau}^{m-1})\Big)d\tau.
$$
Hence we deduce
$$
\frac{d^2}{dt^2}(I_i(\phi_j(t,\cdot))_{|t=0} =2\widehat\mu_{i,j}\overline{w}+2\eta_{i,j}\overline{w}^{2m+1}.
$$
Thus we obtain successively 
$$
\frac{d^2}{dt^2}(I(\phi_1(t,\cdot))_{|t=0} =2(\widehat\mu_{1,1}-\widehat\mu_{2,1})\overline{w}+2(\eta_{1,1}-\eta_{2,1})\overline{w}^{2m+1}
$$
$$
\frac{d^2}{dt^2}(I(\phi_2(t,\cdot))_{|t=0} =2(\widehat\mu_{1,2}-\widehat\mu_{2,2})\overline{w}+2(\eta_{1,2}-\eta_{2,2})\overline{w}^{2m+1}.
$$
We are left with the task of determining  all the involved coefficients. We start with computing  $\widehat\mu_{i,i}$ which follows easily from residue theorem at $\infty$
\begin{eqnarray*}
\widehat\mu_{i,i}&=&\frac{\alpha_i^2}{b_i}\fint_{\mathbb{T}}\frac{1-\tau^{m-1}}{(1-\tau)^2} \Big((1-m)(1-\tau)\overline{\tau}^m-(1-\overline{\tau}^{m-1})\Big)d\tau\\
&=&-\frac{\alpha_i^2}{b_i}\fint_{\mathbb{T}}\frac{1-\tau^{m-1}}{(1-\tau)^2} d\tau\\
&=&(m-1)\frac{\alpha_i^2}{b_i}.
\end{eqnarray*}
This  implies
\begin{equation}\label{Bar2}
\widehat\mu_{1,1}=(m-1) b^2,\quad \widehat\mu_{2,2}=(m-1)\frac{1}{b}\cdot
\end{equation}
For the coefficient  $\widehat\mu_{1,2}$ we may write  in view of \eqref{form706}
\begin{eqnarray}\label{Barr1}
\nonumber\widehat\mu_{1,2}&=&\fint_{\mathbb{T}}\frac{1-\varepsilon b\tau^{m-1}}{(b-\tau)^2} \Big(\varepsilon b(1-m)(b-\tau)\overline{\tau}^m-(1-\varepsilon b\overline{\tau}^{m-1})\Big)d\tau\\
\nonumber&=&-\fint_{\mathbb{T}}\frac{1-\varepsilon b\tau^{m-1}}{(b-\tau)^2} d\tau\\
&=&\varepsilon(m-1)b^{m-1}.
\end{eqnarray}
As to  the coefficient $\widehat\mu_{2,1}$ we apply once again the residue theorem,
\begin{eqnarray*}
\widehat\mu_{2,1}&=&\fint_{\mathbb{T}}\frac{\varepsilon b-\tau^{m-1}}{(1-b\tau)^2} \Big((1-m)(1-b\tau)\overline{\tau}^m-b(\varepsilon b-\overline{\tau}^{m-1})\Big)d\tau\\
&=&\fint_{\mathbb{T}}\frac{\varepsilon b-\tau^{m-1}}{(1-b\tau)^2} \Big((1-m)\overline{\tau}^m+bm\overline{\tau}^{m-1})\Big)d\tau\\
&=&\fint_{\mathbb{T}}\frac{\varepsilon b(1-m)\overline{\tau}^m+\varepsilon b^2 m\overline{\tau}^{m-1}+(m-1)\overline{\tau}}{(1-b\tau)^2}d\tau.
\end{eqnarray*}
Using  \eqref{form706}   and after some   cancellations we get
\begin{equation}\label{Bar3}
\widehat\mu_{2,1}=(m-1) .
\end{equation}
Putting together \eqref{Bar2},\eqref{Barr1} and \eqref{Bar3} we infer
\begin{eqnarray}\label{sd12}
\frac{d^2}{dt^2}(I(\phi_1(t,\cdot))_{|t=0} &=&2(m-1)\big( b^2-1\big)\overline{w}+2(\eta_{1,1}-\eta_{2,1})\overline{w}^{2m+1},\\
\nonumber\frac{d^2}{dt^2}(I(\phi_2(t,\cdot))_{|t=0} &=&2(m-1)\big(\varepsilon b^{m-1}-b^{-1}\big)\overline{w}+2(\eta_{1,2}-\eta_{2,2})\overline{w}^{2m+1}.
\end{eqnarray}
Now we shall move to  the computation of the terms $\eta_{i,j}$ which are defined by
$$
\eta_{i,j}=\alpha_i(m-1)\fint_{\mathbb{T}}\frac{(b_j-b_i\overline{\tau})(\alpha_j-\alpha_i\overline{\tau}^{m-1})}{(b_j-b_i\tau)^2}\overline{\tau}^{m}d\tau+b_i\fint_{\mathbb{T}}\frac{(b_j-b_i\overline{\tau})(\alpha_j-\alpha_i\overline{\tau}^{m-1})^2}{(b_j-b_i\tau)^3}d\tau.
$$
We readily get by invoking  residue theorem at $\infty$ 
\begin{eqnarray*}
\eta_{1,2}&=&\alpha_1(m-1)\fint_{\mathbb{T}}\frac{(b-\overline{\tau})(\alpha_2-\alpha_1\overline{\tau}^{m-1})}{(b-\tau)^2}\overline{\tau}^{m}d\tau+\fint_{\mathbb{T}}\frac{(b-\overline{\tau})(\alpha_2-\alpha_1\overline{\tau}^{m-1})^2}{(b-\tau)^3}d\tau\\
&=&0.
\end{eqnarray*}
Always with  the notation \eqref{Notqy1} we can write
\begin{eqnarray*}
\eta_{2,1}&=&(m-1)\fint_{\mathbb{T}}\frac{(1-b\overline{\tau})(\alpha_1-\overline{\tau}^{m-1})}{(1-b\tau)^2}\overline{\tau}^{m}d\tau+b\fint_{\mathbb{T}}\frac{(1-b\overline{\tau})(\alpha_1-\overline{\tau}^{m-1})^2}{(1-b\tau)^3}d\tau.
\end{eqnarray*}
Developing the numerator and using  the identity  \eqref{form706} lead to
\begin{eqnarray*}
\fint_{\mathbb{T}}\frac{(1-b\overline{\tau})(\alpha_1-\overline{\tau}^{m-1})}{(1-b\tau)^2}\overline{\tau}^{m}d\tau&=&\fint_{\mathbb{T}}\frac{\alpha_1\overline{\tau}^{m}-\overline{\tau}^{2m-1}-b\alpha_1\overline{\tau}^{m+1}+b\overline{\tau}^{2m})}{(1-b\tau)^2}d\tau\\
\\
&=&\varepsilon m b^m-(2m-1) b^{2m-2}-\varepsilon (m+1) b^{m+2}+2m b^{2m}.
\end{eqnarray*}
Concerning the last integral term of $\eta_{2,1}$ it can be calculated as follows,
\begin{eqnarray*}
\fint_{\mathbb{T}}\frac{(1-b\overline{\tau})(\alpha_1-\overline{\tau}^{m-1})^2}{(1-b\tau)^3}d\tau
&=&\fint_{\mathbb{T}}\frac{-2\alpha_1\overline{\tau}^{m-1}+\overline{\tau}^{2m-2}-b\alpha_1^2\overline{\tau}+2\alpha_1 b\overline{\tau}^m-b\overline{\tau}^{2m-1}}{(1-b\tau)^3}d\tau\\
\\
&=&-\varepsilon m(m-1) b^{m-1}+(2m-1) (m-1) b^{2m-3}\\
\\
&-&b^3+\varepsilon m (m+1) b^{m+1}-m (2m-1) b^{2m-1}.
\end{eqnarray*}
Notice that  we have used the following  identity which can be easily derived from residue theorem,
\begin{eqnarray}\label{Idgh1}
\nonumber\fint_{\mathbb{T}}\frac{\overline{\tau}^m}{(1-b\tau)^3}d\tau&=&\fint_{\mathbb{T}}\frac{{\tau}^{m+1}}{(\tau-b)^3}d\tau\\
&=&\frac12(m+1) m b^{m-1}\cdot
\end{eqnarray}
Putting together the preceding formulae yields
\begin{equation}\label{tre11}
\eta_{2,1}=\varepsilon (m+1) b^{m+2}-m b^{2m}-b^4.
\end{equation}
As to  the diagonal terms  $\eta_{i,i}$, we write 
in view of  residue theorem at $\infty$
\begin{eqnarray}\label{th117}
\nonumber\eta_{i,i}&=&(m-1)\frac{\alpha_i^2}{b_i}\fint_{\mathbb{T}}\frac{(1-\overline{\tau})(1-\overline{\tau}^{m-1})}{(1-\tau)^2}\overline{\tau}^{m}d\tau+\frac{\alpha_i^2}{b_i}\fint_{\mathbb{T}}\frac{(1-\overline{\tau})(1-\overline{\tau}^{m-1})^2}{(1-\tau)^3}d\tau\\
&=&0.
\end{eqnarray}
Combining  \eqref{sd12}, \eqref{Idgh1}, \eqref{tre11} and \eqref{th117} we come to the following expressions
\begin{eqnarray}\label{ds13}
\frac12\frac{d^2}{dt^2}(I(\phi_1(t,w))_{|t=0} &=&(m-1)\big( b^2-1\big)\overline{w}+\Big(b^4+m b^{2m}-\varepsilon (m+1) b^{m+2}\Big)\overline{w}^{2m+1}\\
\nonumber\frac12\frac{d^2}{dt^2}(I(\phi_2(t,w))_{|t=0} &=&(m-1)\big(\varepsilon b^{m-1}-b^{-1}\big)\overline{w}.
\end{eqnarray}
Now plugging \eqref{merc5},\eqref{merc45} and \eqref{ds13} into \eqref{dzr22} we deduce  that
\begin{eqnarray*}
\frac{d^2}{dt^2} G_1(\lambda_m, tv_m)_{|t=0}&=&2m\Big(b^4-2\varepsilon b^{m+2}+b^{2m}\Big) e_{2m}\\
\frac{d^2}{dt^2} G_2(\lambda_m, tv_m)_{|t=0}&=& 0,
\end{eqnarray*}
with the usual  notation
$e_m\triangleq\hbox{Im}\big(\overline{w}^m\big)$. This completes the proof of the identity \eqref{Sec-first}.

\subsection{Hessian computation}
The main object is to  prove the points $(2), (3)$ and $(4)$ of \mbox{Proposition \ref{explicitval}.} This will be done separately in several sections  and we shall start with the second point.
\subsubsection{Computation of $\partial_{\lambda\lambda}F_2(\lambda_m,0)$}
According to  Proposition \ref{platit2} one has the formula
\begin{eqnarray*}
\partial_{\lambda\lambda} F_2(\lambda_m,0)&=&2Q\partial_{\lambda}\partial_fG(\lambda_m,0)\partial_\lambda\partial_g\varphi(\lambda_m,0)v_m
\\
&=&-2Q\partial_{\lambda}\partial_fG(\lambda_m,0)\Big[\partial_fG(\lambda_m,0)\Big]^{-1}\partial_{\lambda}\partial_f G(\lambda_m,0)v_m.
\end{eqnarray*}
We point out that we have used in the last line the identity \eqref{lostra} illustrating  the loss of transversality and which implies that
$$
(\hbox{Id}-Q)\partial_{\lambda}\partial_f G(\lambda_m,0)v_m=\partial_{\lambda}\partial_f G(\lambda_m,0)v_m.
$$
As we are dealing with smooth functions, Fr\'echet derivative coincides with G\^ateaux derivative and furthermore one can  use Schwarz theorem combined with \eqref{ddff},
\begin{eqnarray*}
\partial_{\lambda}\partial_f G(\lambda_m,0)v_m&=&\big\{\partial_{\lambda}\mathcal{L}_{\lambda,b}v_m\big\}_{|\lambda=\lambda_m}\\
&=& m b\left( \begin{array}{c}
  \varepsilon \\
1
\end{array} \right) \, e_m.
\end{eqnarray*}
Using \eqref{fs1}, \eqref{fs10} and \eqref{Inv} we obtain 
\begin{eqnarray}\label{zqs11}
\nonumber\partial_\lambda\partial_g\varphi(\lambda_m,0)v_m&=&-mb \Big[\partial_fG(\lambda_m,0)\Big]^{-1} \Big\{\left( \begin{array}{c}
  \varepsilon \\
1
\end{array} \right) \, e_m\Big\}\\
&=& mb^{1-m}\left( \begin{array}{c}
 1\\
0
\end{array} \right) \,\overline{w}^{m-1}.
\end{eqnarray}
By invoking  \eqref{ddff} we can  infer that

\begin{eqnarray}\label{zqs110}
\nonumber \partial_{\lambda}\partial_fG(\lambda_m,0)\partial_\lambda\partial_g\varphi(\lambda_m,0)v_m &=&mb^{1-m}\begin{pmatrix}
m & 0 \\
0& bm
\end{pmatrix}\left( \begin{array}{c}
 1\\
0
\end{array} \right)\, e_m\\
&=&m^2 b^{1-m}\left( \begin{array}{c}
 1 \\
0
\end{array} \right) \, e_m.
\end{eqnarray}
It follows  from \eqref{project1} that 
\begin{eqnarray}\label{zqs120}
\nonumber\partial_{\lambda\lambda} F_2(\lambda_m,0)
\nonumber&=&2m^2b^{1-m}\Big\langle \mathbb{W}, \left( \begin{array}{c}
 1\\
0
\end{array} \right)\Big\rangle\,\mathbb{W}_m\\
&=&\sqrt{2}m^2b^{1-m}\mathbb{W}_m.
\end{eqnarray}
This finishes the proof of Proposition \ref{explicitval}-$(2)$.

\subsubsection{Computation of $\partial_{t}\partial_{\lambda}F_2(\lambda_m,0)$} We intend to prove that this mixed derivative vanishes on the critical point. This will be done in several steps and the first one is to use the following formula stated in Proposition  \ref{platit2}-$(3)$ :
\begin{eqnarray*}
\partial_{t}\partial_\lambda F_2(\lambda_m,0)&=&\frac12Q\partial_{\lambda }\partial_{ff}G(\lambda_m,0)[v_m,v_m]+Q\partial_{ff}G(\lambda_m,0)\big[\partial_{\lambda}\partial_g\varphi(\lambda_m,0)v_m,v_m\big]\\
&+&\frac12 Q\partial_{\lambda}\partial_fG(\lambda_m,0) \widetilde{v}_m.
\end{eqnarray*}
The easiest term is the first one and direct computations  yield,
\begin{eqnarray*}
\partial_\lambda \partial_{ff}G_j(\lambda_m,0)[v_m,v_m]&=&\frac{d^2}{dt^2}\partial_\lambda G_j(\lambda, tv_m)_{|\lambda=\lambda_m, t=0}\\
&=&-\frac{d^2}{dt^2}\hbox{Im} \Big\{ w\overline{\phi_j(t,w)}\phi_j^\prime(t,w)\Big\}\\
&=&0,
\end{eqnarray*}
with $\phi_j$ being the function defined in \eqref{phi_9}. However, the second  second term is much more complicated and one notices first that
\begin{eqnarray*}
Q\partial_{ff}G(\lambda_m,0)\big[\partial_{\lambda}\partial_g\varphi(\lambda_m,0)v_m,v_m\big]&=&Q\partial_s\partial_tG\big(\lambda_m, tv_m+s \partial_{\lambda}\partial_g\varphi(\lambda_m,0)v_m\big)_{|t,s=0}.
\end{eqnarray*}
It suffices now to combine  Lemma \ref{lemexp} below  with \eqref{zqs11} in order  to  deduce that
$$
Q\partial_s\partial_tG(\lambda_m, tv_m+s \partial_{\lambda}\partial_g\varphi(\lambda_m,0)v_m)_{|t,s=0}
=0.
$$
With regard to the last term  we invoke  once again Lemma \ref{lemexp} which implies that
$$
Q\partial_{\lambda}\partial_fG(\lambda_m,0) \widetilde{v}_m=0.
$$
Finally, after gathering the preceding results  we obtain the desired result, namely,
$$
\partial_{t}\partial_\lambda F_2(\lambda_m,0)=0.
$$
To end  this section we shall prove the next result which has been used in the foregoing computations.
\begin{lemma}\label{lemexp}
Let $(m,b)\in \widehat{\mathbb{S}}$, then  the following holds true.
\begin{enumerate}
\item Expression of $\widetilde{v}_m:$

\begin{equation*}
\widetilde{v}_2=0\quad\hbox{and}\quad \widetilde{v}_m(w)=\widehat{\beta}_m\left( \begin{array}{c}
-1-2b^m \\
b^{2m-1}
\end{array} \right) \overline{w}^{2m-1}, \quad m\geq3.
\end{equation*}
with
$$
\widehat{\beta}_m\triangleq\frac{2m(b^2+b^{m})^2}{ (b^m+1)^2(-b^{2m}+2b^m+1)}\cdot
$$
Furthermore, 
$$
Q\partial_\lambda\partial_f G(\lambda_m,0) \widetilde{v}_m=0.
$$
\item Let ${Z}_m=\mathbb{\alpha}\overline{w}^{m-1}$ and $\widehat{Z}_m= \mathbb{\beta}\overline{w}^{m-1}$, with $\mathbb{\alpha}, \mathbb{\beta}\in \RR^2.$ Then 
$$
Q\partial_t\partial_sG(\lambda_m, t\, Z_m+s\, \widehat{Z}_m)_{|t,s=0}=0.
$$
\end{enumerate}
\end{lemma}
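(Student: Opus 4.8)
The plan is to establish the two assertions separately. Part (1) will follow from the explicit formula for $\widetilde v_m$ supplied by Proposition \ref{platit2}-(2), combined with the already established identity \eqref{Sec-first} and the description of the inverse of $\partial_fG(\lambda_m,0)$ given in \eqref{Inv}; Part (2) will be a structural mode-counting argument resting on the form of the projection $Q$ in \eqref{project1}.

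\textbf{Part (1).} Proposition \ref{platit2}-(2) yields
$$
\widetilde v_m=-\Big[\partial_fG(\lambda_m,0)\Big]^{-1}\frac{d^2}{dt^2}(\textnormal{Id}-Q)G(\lambda_m,tv_m)_{|t=0},
$$
and by \eqref{Sec-first} the second derivative equals $\begin{pmatrix}\widehat\alpha\\0\end{pmatrix}e_{2m}$ with $\widehat\alpha=2m(b^2-\varepsilon b^m)^2$. For $m=2$ one has $\varepsilon=1$, so $\widehat\alpha=0$ and $\widetilde v_2=0$. For $m\geq3$ the expression is supported on the single index $n=2$ (the mode $e_{2m}$), whence by \eqref{project1} the projection $Q$ annihilates it, $(\textnormal{Id}-Q)$ acts as the identity, and \eqref{Inv} reduces the task to
$$
\widetilde v_m=-M_{2m}(\lambda_m)^{-1}\begin{pmatrix}\widehat\alpha\\0\end{pmatrix}\overline w^{\,2m-1}.
$$
Next I would write $M_{2m}(\lambda_m)$ explicitly from \eqref{Matr1}; its bottom-right entry equals $-b$ times its top-left entry $m-1-mb^2$, so $\det M_{2m}(\lambda_m)=b\big[b^{4m}-(m-1-mb^2)^2\big]$. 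Invoking the degeneracy relation (equivalently $\tfrac{1-b^2}{2}m-1=-\varepsilon b^m$, cf. \eqref{relat1} and \eqref{formm}) to rewrite $m-1-mb^2=1-2\varepsilon b^m=1+2b^m$, this determinant factors as $-b\,(b^m+1)^2(-b^{2m}+2b^m+1)$, and a direct evaluation of $M_{2m}(\lambda_m)^{-1}\begin{pmatrix}\widehat\alpha\\0\end{pmatrix}$ produces precisely $\widehat\beta_m\begin{pmatrix}-1-2b^m\\ b^{2m-1}\end{pmatrix}$. The trailing identity $Q\partial_\lambda\partial_fG(\lambda_m,0)\widetilde v_m=0$ is again a mode count: $\widetilde v_m$ sits on the index $n=2$, the Fourier multiplier $\partial_\lambda\partial_fG(\lambda_m,0)$ preserves the index by \eqref{ddff}, and $Q$ retains only $n=1$.

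\textbf{Part (2).} Here the argument is structural rather than computational. The quantity in question equals $Q$ applied to the symmetric bilinear form $\partial_{ff}G(\lambda_m,0)[Z_m,\widehat Z_m]$, so by polarization it suffices to control the diagonal values $\frac{d^2}{dt^2}G(\lambda_m,tZ)_{|t=0}$ for $Z=\alpha\,\overline w^{\,m-1}$ with an arbitrary $\alpha\in\RR^2$. The computation that produced \eqref{Sec-first} applies verbatim once the kernel vector is replaced by such an $\alpha$: the scaling arguments fixing which Fourier modes appear in $\frac{d}{dt}I$ and $\frac{d^2}{dt^2}I$ depend only on the homogeneity degree $m-1$ of the perturbation in \eqref{phi_9}, not on the numerical values of its components. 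Hence the outcome is again supported purely on $e_{2m}$, the bilinear form carries no $e_m$-component, and $Q$ annihilates it by \eqref{project1}.

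The delicate point is the algebraic simplification in Part (1): one must recognise, using $\Delta_m=0$, that $\det M_{2m}(\lambda_m)$ factors as $-b\,(b^m+1)^2(-b^{2m}+2b^m+1)$, which is exactly the denominator occurring in $\widehat\beta_m$. Without this the inverse matrix does not visibly match the asserted closed form; everything else reduces to the mode bookkeeping dictated by \eqref{project1} and \eqref{ddff}.
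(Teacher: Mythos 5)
Your proposal is correct. For part (1) it follows the paper's proof essentially step for step: the formula for $\widetilde v_m$ from Proposition \ref{platit2}-(2), the vanishing of $\widehat\alpha=2m(b^2-\varepsilon b^m)^2$ when $m=2$ (since $\varepsilon=1$ there), and the inversion of $M_{2m}(\lambda_m)$ after using the degeneracy relation \eqref{relat1} to rewrite the diagonal entries, yielding exactly the paper's factorization $\det M_{2m}(\lambda_m)=b\,(b^m+1)^2(b^{2m}-2b^m-1)$; the concluding mode count via \eqref{ddff} and \eqref{project1} for $Q\partial_\lambda\partial_fG(\lambda_m,0)\widetilde v_m=0$ is also the paper's argument. (One cosmetic slip: $M_{2m}(\lambda_m)^{-1}\bigl(\widehat\alpha,0\bigr)^T$ equals $-\widehat\beta_m\bigl(-1-2b^m,\,b^{2m-1}\bigr)^T\!$, the extra minus being absorbed by the leading minus in the formula for $\widetilde v_m$; your final answer is the correct one.) For part (2) you take a genuinely different, and slightly more economical, route: the paper introduces the two-parameter family $\phi_j(t,s,w)=b_jw+(t\alpha_j+s\beta_j)\overline{w}^{m-1}$ and computes $\partial_tI$, $\partial_sI$ and $\partial_t\partial_sI$ at $t=s=0$ by fresh scaling arguments, placing everything on the modes $e_0$ and $e_{2m}$, whereas you polarize the symmetric bilinear form $\partial_{ff}G(\lambda_m,0)$ to reduce to diagonal values $\frac{d^2}{dt^2}G(\lambda_m,t\gamma\overline w^{m-1})_{|t=0}$ with $\gamma\in\RR^2$ arbitrary, and then observe that the derivation of \eqref{Sec-first} uses only the homogeneity degree $m-1$ of the perturbation, not the particular kernel coefficients. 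This is valid because $Z_m\pm\widehat Z_m$ stay of the form $\gamma\overline w^{m-1}$ (and the constant-mode contributions have real coefficients, so their imaginary parts vanish); it buys you a proof with no new computation at all. What the paper's direct two-parameter computation buys in exchange is machinery that it reuses later in the proof of Proposition \ref{lem76}, where $\widehat Z$ sits on the different mode $\overline w^{2m-1}$ and your equal-mode polarization reduction would no longer apply.
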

\begin{proof}

${\bf(1)}$ The definition of $\widetilde{v}_m$ was given in Proposition \ref{platit2} and therefore by making appeal \mbox{to   \eqref{Inv}} and \eqref{Sec-first} we get
\begin{eqnarray}\label{stri1}
\nonumber\widetilde{v}_m&=&-\Big[\partial_fG(\lambda_m,0)\Big]^{-1}\frac{d^2}{dt^2} (Id-Q)G(\lambda_m, tv_m)_{|t=0}\\
\nonumber&=&-\Big[\partial_fG(\lambda_m,0)\Big]^{-1}\left( \begin{array}{c}
 \widehat{\alpha} \\
0
\end{array} \right)e_{2m}\\
&=&-M_{2m}^{-1}\left( \begin{array}{c}
 \widehat{\alpha} \\
0
\end{array} \right)\overline{w}^{2m-1}.
\end{eqnarray}
The frequency mode $m=2$ is very typical. Indeed, we recall that $ \widehat{\alpha}=2m(b^2-\varepsilon b^m)^2$ and  thus we deduce from the definition of   $\varepsilon$ in \eqref{formm} that  for $m=2$
$$
\widehat{\alpha}=0
$$
which in turn  implies that
$$
\widetilde{v}_2=0.
$$
Now we move to higher frequencies $m\geq3$ which requires more computations. From the preceding formula for $\widetilde{v}_m$ one needs to compute the inverse of the matrix $M_{2m}.$ This matrix was defined in   \eqref{Matr1} and  takes the form
\begin{eqnarray*}
 M_{2m}(\lambda_m)&=&\begin{pmatrix}
2m\lambda_m -1-2m b^2 & b^{2m+1} \\
 -b^{2m}& b\big(2m\lambda_m-2m+1\big)
\end{pmatrix}.
\end{eqnarray*}
Note that we are in the case  $b=b_m$ which was defined in \eqref{relat1} and for 
not to burden the notation we drop the subscript $m$. Also, recall that $\lambda_m=\frac{1+b^2}{2}$ and therefore using the \mbox{relation \eqref{relat1}} we find that
\begin{eqnarray*}
 M_{2m}(\lambda_m)&=&\begin{pmatrix}
1+2b^m& b^{2m+1} \\
 -b^{2m}&-b-2b^{m+1}
\end{pmatrix}.
\end{eqnarray*}
Its inverse can be computed in usual way and  it comes that
\begin{eqnarray*}
\nonumber M_{2m}^{-1}(\lambda_m)&=&\frac{1}{\hbox{det } M_{2m}(\lambda_m)}\begin{pmatrix}
-b-2b^{m+1} & -b^{2m+1} \\
 b^{2m}&1+2 b^m\end{pmatrix}.
\end{eqnarray*}
If we insert  this expression in \eqref{stri1}  we easily obtain
\begin{eqnarray*}
\widetilde{v}_m&=&-\frac{ b\widehat{\alpha}}{ \hbox{det}M_{2m}}\left( \begin{array}{c}
-1-2b^m\\
b^{2m-1}
\end{array} \right) \overline{w}^{2m-1}.
\end{eqnarray*}
As to the determinant  of $M_{2m}$ it can be  factorized  in the form,
$$
\hbox{det } M_{2m}=b(b^m+1)^2(b^{2m}-2b^m-1).
$$
It follows from th expression of $\widetilde\alpha$ stated in \eqref{Sec-first}  that
\begin{equation}\label{beta11}
\widetilde{v}_m=\widehat{\beta}_m\left( \begin{array}{c}
-1-2b^m \\
b^{2m-1}
\end{array} \right) \overline{w}^{2m-1},\quad \widehat{\beta}_m =\frac{2m(b^2+b^{m})^2}{ (b^m+1)^2(-b^{2m}+2b^m+1)}, \quad m\geq3.
\end{equation}
Now applying \eqref{ddff} we find the following structure
$$
\partial_\lambda\partial_fG(\lambda_m,0)\widetilde{v}_m=C_2\, e_{2m},\quad C_2\in\RR^2.
$$
Consequently we obtain due to \eqref{project1}
$$
Q\partial_\lambda\partial_fG(\lambda_m,0)\widetilde{v}_m=0.
$$
Actually, at this stage we have not used the explicit form of $\widetilde{v}_m$. However, the exact  form will be crucial in the computation of  $\partial_{tt}F_2(\lambda_m,0)$ that will be done later  in \mbox{Subsection \ref{SectionQ}.}
\\
\\
${\bf{(2)}}$ Set  $Z_m=\alpha \overline{w}^{m-1}$ and $\widehat{Z}_m=\beta \overline{w}^{m-1}$  with $\alpha=(\alpha_1,\alpha_2)$ and $  \beta=(\beta_1,\beta_2)\in \RR^2.$ To simplify the expressions we need to fix some notation :
\begin{equation}\label{phi_x}
\phi_j(t,s,w)=b_j w+(t\alpha_j +s\beta_j)\overline{w}^{m-1}.
\end{equation}
Then from \eqref{G_j} and \eqref{kernexp} we may write
$$
G_j(\lambda_m, tZ_m+s\widehat{Z}_m)=\hbox{Im}\Bigg\{\Big[(1-\lambda_m)\overline{\phi_j(t,s,w)}+I(\phi_j(t,s,w))\Big] w\Big(b_j+(t\alpha_j+s\beta_j)(1-m)\overline{w}^m\Big)\Bigg\},
$$
with
$$
I(\phi_j(t,s,w))=I_1(\phi_j(t,s,w))-I_2(\phi_j(t,s,w)).
$$
and
$$ I_i(\phi_j(t,s,w))=\fint_{\mathbb{T}}
\frac{\overline{\phi_j(t,s,w)}-\overline{\phi_i(t,s,\tau)}}{\phi_j(t,s,w)-\phi_i(t,s,\tau)}\phi'_i(t,s,\tau)d\tau.
$$
It is plain according to our notation  that
$$
 I_i(\phi_j(t,s,w))=\fint_{\mathbb{T}}\frac{K(\overline\tau,\overline w)}{K(\tau,w)}{\Big(b_i+[t\alpha_i+s\beta_i](1-m)\overline{\tau}^m\Big)} d\tau,
$$
with
$$
K(\tau,w)\triangleq b_j w-b_i\tau+\big(t\alpha_j+ s\beta_j\big) \overline{w}^{m-1}-\big(t\alpha_i+s\beta_i\big)\overline{\tau}^{m-1}.
$$
Differentiating with respect to $t$  one readily obtains
\begin{eqnarray*}
\partial_tI_i(\phi_j(t,s,w)_{|t=s=0}&=&b_i\fint_{\mathbb{T}}\frac{\alpha_j w^{m-1}-\alpha_i\tau^{m-1}}{b_j w-b_i\tau}d\tau+\alpha_i(1-m)\fint_{\mathbb{T}}\frac{(b_j \overline w-b_i\overline\tau)\tau^{m}}{b_j w-b_i\tau}{}d\tau\\
&-&b_i\fint_{\mathbb{T}}\frac{(b_j \overline w-b_i\overline\tau)(\alpha_j \overline w^{m-1}-\alpha_i\overline\tau^{m-1})}{(b_j w-b_i\tau)^2}{}d\tau.
\end{eqnarray*}
For our purpose we do not need to compute exactly these integrals and scaling arguments appear in fact  to be sufficient. Indeed,  making change of variables we find real constants $a_{1,j}, b_{1,j}\in\RR$ such that
$$
\partial_tI(\phi_j(t,s,w)_{|t=s=0}= a_{1,j} w^{m-1}+ b_{1,j} \overline{w}^{m+1}.
$$
Similarly, differentiating with respect to the variable $s$ and repeating the same argument as before we get real constants $a_{2,j}, b_{2,j}\in \RR$ such that
\begin{equation*}
\partial_sI(\phi_j(t,s,w)_{|t=s=0}= a_{2,j} w^{m-1}+ b_{2,j} \overline{w}^{m+1}.
\end{equation*}
By the same way we find  real constants $a_{3,j}, b_{3,j},\in \RR$ with
\begin{equation*}
\partial_t\partial_sI(\phi_j(t,s,w)_{|t=s=0}= a_{3,j} \overline{w}+ b_{3,j} \overline{w}^{2m+1}.
\end{equation*}
Note that complete details about these computations will be given later in the proof of \mbox{Proposition \ref{lem76}.}
Now it is not difficult to check that
\begin{eqnarray*}
\partial_t\partial_s G_j\big(\lambda_m, tZ_m+s\widehat{Z}_m\big)_{|t,s=0}&=&\hbox{Im}\Bigg\{(1-m)\overline{w}^{m-1}\beta_j\,\partial_tI(\phi_j(t,s,w)_{|t=s=0}\\
&+&(1-m)\overline{w}^{m-1}\alpha_j\,\partial_sI(\phi_j(t,s,w)_{|t=s=0}\\&
+&b_j w \partial_t\partial_sI(\phi_j(t,s,w)_{|t=s=0}\Bigg\}.
\end{eqnarray*}
Consequently we find a vector $C_3\in\RR^2$ such that
$$
\partial_t\partial_s G\big(\lambda_m, tZ_m+s\widehat{Z}_m\big)_{|t,s=0}= C_3\, e_{2m}.
$$
Hence  we deduce from \eqref{project1} that
$$
Q\partial_t\partial_s G\big(\lambda_m, tZ_m+s\widehat{Z}_m\big)_{|t,s=0}=0.
$$
The proof of Lemma \ref{lemexp} is now complete.
\end{proof}

\subsubsection{Computation of $\partial_{tt}F_2(\lambda_m,0)$}\label{SectionQ} We want to establish the result of Proposition \ref{explicitval}-(4) which is the last point to check. First
recall from Proposition \ref{platit2} that
$$
\partial_{tt}F_2(\lambda_m,0)=\frac13\frac{d^3}{dt^3}QG(\lambda_m, tv_m)_{|t=0}+Q\partial_{ff}G(\lambda_m,0)\big[v_m,\widetilde{v}_m\big]
$$
where the explicit expression of  $\widetilde{v}_m$ is given in Lemma \ref{lemexp}. For $m=2$ the vector  $\widetilde{v}_2$ vanishes and because the second term  of the right-hand side is bilinear then  necessary it will be zero which implies
$$
\partial_{tt}F_2(\lambda_2,0)=\frac13\frac{d^3}{dt^3}QG(\lambda_2, tv_2)_{|t=0}.
$$
Making appeal to  Proposition \ref{prop77} stated later in the Subsection \ref{Seccw1}   and using  the \mbox{convention  \eqref{formm}} we readily  get 
\begin{eqnarray*}
\partial_{tt}F_2(\lambda_2,0)
&=&-\frac{(1-b^2)^2}{b}\sqrt{2}\,\mathbb{W}_2.
\end{eqnarray*}
However, 
for  $m\ge 3$ we combine Proposition \ref{prop77} with Proposition \ref{lem76} and \eqref{formm} leading to 
\begin{eqnarray*}
\partial_{tt}F_2(\lambda_m,0)&=&\frac{1}{\sqrt{2}}\Big[ m(m-1)\frac{(1-b^2)^2}{b}+{\widehat{\beta}_m \widehat{\gamma}_m}\Big]\mathbb{W}_m,
\end{eqnarray*}
with
$$
\widehat{\beta}_m =\frac{2m(b^2+b^{m})^2}{ (b^m+1)^2(-b^{2m}+2b^m+1)}
$$
and 
$$\widehat{\gamma}_m=(m-2)b+mb^{2m-1}+(4m-6)b^{m+1}
+4(m-1)b^{2m+1}+2mb^{3m-1}.
$$
The only point remaining concerns   the formulae for $\frac{d^3}{dt^3}QG(\lambda_2, tv_2)_{|t=0}$ and $Q\partial_{ff}G(\lambda_m,0)[v_m,\widetilde{v}_m]$  which will be done separately in the next two subsections.
\subsubsection{Computation of $\frac{d^3}{dt^3}QG(\lambda_m, tv_m)_{|t=0}$}\label{Seccw1}
The main result reads as follows.
\begin{proposition}\label{prop77}
Let $m\geq2$, then we have
$$
\frac13\frac{d^3}{dt^3}QG(\lambda_m, tv_m)_{|t=0}=-\varepsilon m(m-1)\frac{(1-b^2)^2}{b\sqrt{2}}\mathbb{W}_m,
$$
\end{proposition}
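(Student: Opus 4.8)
The plan is to extract from $\frac{d^3}{dt^3}G(\lambda_m,tv_m)|_{t=0}$ only its $e_m$-Fourier coefficient, since by the definition \eqref{project1} of $Q$ the quantity $QG(\lambda_m,tv_m)$ is a scalar multiple of $\mathbb{W}_m$ determined solely by the coefficient $B_1(t)\in\RR^2$ of $e_m$ in $G(\lambda_m,tv_m)$; thus $\frac13\frac{d^3}{dt^3}QG(\lambda_m,tv_m)|_{t=0}=\frac13\big\langle B_1'''(0),\widehat{\mathbb{W}}\big\rangle\,\mathbb{W}_m$. Writing, as in \eqref{G59}, $G_j=\textnormal{Im}\{P_j\,Q_j\}$ with $P_j=(1-\lambda_m)\overline{\phi_j(t,w)}+I(\phi_j(t,w))$ and $Q_j=w\phi_j'(t,w)=b_jw+t(1-m)\alpha_j\overline{w}^{m-1}$, I first observe that both $\overline{\phi_j(t,w)}$ and $Q_j$ are affine in $t$ (see \eqref{phi_9}). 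Hence the Leibniz rule collapses at third order and leaves only
\[
\frac{d^3}{dt^3}G_j(\lambda_m,tv_m)\Big|_{t=0}=\textnormal{Im}\Big\{b_jw\,\frac{d^3}{dt^3}I(\phi_j)\big|_{t=0}+3(1-m)\alpha_j\overline{w}^{m-1}\,\frac{d^2}{dt^2}I(\phi_j)\big|_{t=0}\Big\}.
\]

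Into this formula I feed two ingredients. The second derivative $\frac{d^2}{dt^2}I(\phi_j)|_{t=0}$ is already known from \eqref{ds13}; a homogeneity (scaling) count shows it carries only the modes $\overline{w}$ and $\overline{w}^{2m+1}$, and after multiplication by $\overline{w}^{m-1}$ only its $\overline{w}$-part reaches the level $\overline{w}^m\sim e_m$, the $\overline{w}^{2m+1}$-part landing in $e_{3m}$ which $Q$ kills. The genuinely new ingredient is therefore $\frac{d^3}{dt^3}I(\phi_j)|_{t=0}$, of which the same scaling count shows only the modes $\overline{w}^{m+1}$ and $\overline{w}^{3m+1}$ appear; since on $\mathbb{T}$ one has $w\,\overline{w}^{m+1}=\overline{w}^m$, exactly the $\overline{w}^{m+1}$-part contributes to $e_m$ through the term $b_jw\,\frac{d^3}{dt^3}I(\phi_j)$.

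To produce it I reuse the representation of Section \ref{Sec44}: with $A=b_jw-b_i\tau$, $B=\alpha_j\overline{w}^{m-1}-\alpha_i\overline{\tau}^{m-1}$ and $C=\alpha_i(1-m)\overline{\tau}^m$ one has $I_i(\phi_j(t,w))=\fint_{\mathbb{T}}\frac{\overline{A}+t\overline{B}}{A+tB}\,(b_i+tC)\,d\tau$, and differentiating three times at $t=0$ gives
\[
\frac{d^3}{dt^3}I_i(\phi_j)\big|_{t=0}=6\fint_{\mathbb{T}}\frac{B\,(A\overline{B}-\overline{A}B)\,(b_iB-AC)}{A^4}\,d\tau.
\]
Expanding the numerator into monomials and keeping the part that produces the mode $\overline{w}^{m+1}$, the relevant contribution equals $6\fint_{\mathbb{T}}\frac{B\overline{B}\,(b_iB-AC)}{A^3}\,d\tau$, the remaining terms contributing only the $\overline{w}^{3m+1}$-mode. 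I would then evaluate the four scalar integrals $(i,j)\in\{1,2\}^2$ by the residue theorem exactly as the coefficients $\gamma_{i,j},\eta_{i,j},\widehat{\mu}_{i,j}$ were handled before: the diagonal terms $i=j$ by the residue-at-infinity argument used in \eqref{merc3} and \eqref{th117}, and the off-diagonal terms by the identities \eqref{form706}, \eqref{Idgh1} and their fourth-order analogue $\fint_{\mathbb{T}}\overline{\tau}^{k}(1-b\tau)^{-4}d\tau$. Substituting the outcome, together with the $\overline{w}$-coefficients of \eqref{ds13}, back into the displayed formula for $\frac{d^3}{dt^3}G_j|_{t=0}$ yields the vector $B_1'''(0)$, and pairing it with $\widehat{\mathbb{W}}$ gives the asserted scalar multiple of $\mathbb{W}_m$.

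The main obstacle is not the residue calculus but the final algebraic collapse. The raw expression for $\frac13\langle B_1'''(0),\widehat{\mathbb{W}}\rangle$ will be a polynomial in $b$ of degree up to $3m$, with coefficients depending on $m$ and carrying the sign $\varepsilon$, whereas the target $-\varepsilon m(m-1)\frac{(1-b^2)^2}{b\sqrt2}$ is remarkably simple. The reconciliation is forced by the defining relation of $\widehat{\mathbb{S}}$: reading off \eqref{formm} the unified identity $\frac{m(1-b^2)}{2}-1=-\varepsilon b^m$ (which is \eqref{relat1} when $m\ge3$ and an automatic identity when $m=2$), together with $\varepsilon^2=1$, all the intermediate powers $b^{m},b^{2m},b^{3m}$ must cancel; in particular the factor $(m-1)^2$ coming from the $\frac{d^2}{dt^2}I$ term has to recombine with the $\frac{d^3}{dt^3}I$ term into the factor $m(m-1)$. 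Verifying that this cancellation is exact, uniformly in the two cases $\varepsilon=\pm1$, is the delicate step.
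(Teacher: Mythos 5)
Your architecture is exactly the paper's: the Leibniz collapse to the two-term formula \eqref{FF11}, the reuse of the $\overline{w}$-coefficients from \eqref{sd12}--\eqref{ds13}, the identity $\frac{d^3}{dt^3}I_i(\phi_j)|_{t=0}=6\fint_{\mathbb{T}}B(A\overline{B}-\overline{A}B)(b_iB-AC)A^{-4}d\tau$, the mode count showing only the $\overline{w}^{m+1}$-part survives $Q$, and residue evaluation of the four pairs $(i,j)$. But one step, as you describe it, would fail. Splitting your relevant integral as the paper does, $\fint_{\mathbb{T}}\frac{B\overline{B}(b_iB-AC)}{A^3}d\tau=\big((m-1)J_{i,j}+b_iK_{i,j}\big)\overline{w}^{m+1}$ with $K_{i,j}=\fint_{\mathbb{T}}\frac{(\alpha_j-\alpha_i\overline{\tau}^{m-1})^2(\alpha_j-\alpha_i\tau^{m-1})}{(b_j-b_i\tau)^3}d\tau$, the diagonal terms do \emph{not} all vanish, contrary to what your appeal to the vanishing arguments of \eqref{merc3} and \eqref{th117} suggests. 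One has $J_{i,i}=0$, but
\[
K_{i,i}=\frac{\alpha_i^3}{b_i^3}\fint_{\mathbb{T}}\frac{(1-\overline{\tau}^{m-1})^2(1-\tau^{m-1})}{(1-\tau)^3}\,d\tau=\frac{(m-1)(m-2)}{2}\,\frac{\alpha_i^3}{b_i^3},
\]
nonzero for $m\geq3$; the paper obtains it by deforming to a circle $C_r$ with $r>1$ and taking the residue of the third-order pole at $\tau=1$, not by decay at infinity (the integrand here is cubic in $B$, unlike the quadratic diagonal integrands that vanished earlier). This term is indispensable: $K_{2,2}$ supplies the $b^{-2}$ in the $\overline{w}^{m+1}$-coefficient of $\frac{d^3}{dt^3}I(\phi_2)$, hence the $b^{-1}$ in the $e_m$-coefficient of $\frac{d^3}{dt^3}G_2$, without which the factor $(1-b^2)^2/b$ in the statement cannot arise.

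Your final paragraph also misdiagnoses where the collapse comes from: the paper never invokes \eqref{relat1} (nor your unified identity) in this proposition, and no powers $b^{2m},b^{3m}$ survive to be cancelled by it. The $b^{2m-1}$ terms cancel \emph{identically in $b$} inside $(m-1)J_{2,1}+bK_{2,1}$, the two $e_m$-coefficients come out as $3m(m-1)(2\varepsilon b-b^{m-1}-\varepsilon b^3)$ and $3m(m-1)(b^{-1}-\varepsilon b^{m-1})$ after the $(m-1)^2$ and $(m-1)(m-2)$ pieces recombine by plain algebra, and the last surviving $b^{m-1}$ terms cancel in the scalar pairing simply because $\widehat{\mathbb{W}}=\frac{1}{\sqrt{2}}(1,-\varepsilon)$ and $\varepsilon^2=1$. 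This identical-in-$b$ cancellation is the point: it is what makes the proposition valid for $m=2$ with \emph{arbitrary} $b\in\,]0,1[$ (your route would not strictly fail there, since for $m=2$ your identity is automatic, but the anticipated ``delicate'' reconciliation is vacuous). Finally, the fourth-order pole identity $\fint_{\mathbb{T}}\overline{\tau}^{k}(1-b\tau)^{-4}d\tau$ you mention is never needed: as you yourself observed, the $A^{-4}$ part only feeds the $e_{3m}$ mode killed by $Q$, so \eqref{form706} and \eqref{Idgh1} suffice.
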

where the definition of $\varepsilon$ was given in \eqref{formm}.
\begin{proof}
From Leibniz's rule we can easily check that
\begin{equation}\label{FF11}
\frac{d^3}{dt^3}G_j(\lambda_m, tv_m)_{|t=0}= \hbox{Im}\bigg\{b_j w\frac{d^3}{dt^3}I(\phi_j(t,\cdot))_{|t=0}+3(1-m)\alpha_j \overline{w}^{m-1}\frac{d^2}{dt^2}(I(\phi_j(t,\cdot))_{|t=0} )\bigg\},
\end{equation}
where the function $G_j$ was defined in \eqref{G59}.
According to  \eqref{sd12} there exist real coefficients $\eta_{i,j}$ such that 
\begin{eqnarray*}
\frac{d^2}{dt^2}(I(\phi_1(t,\cdot))_{|t=0} &=&2(m-1)\big( b^2-1\big)\overline{w}+2(\eta_{1,1}-\eta_{2,1})\overline{w}^{2m+1},\\
\nonumber\frac{d^2}{dt^2}(I(\phi_2(t,\cdot))_{|t=0} &=&2(m-1)\big(\varepsilon b^{m-1}-b^{-1}\big)\overline{w}+2(\eta_{1,2}-\eta_{2,2})\overline{w}^{2m+1}.
\end{eqnarray*}
Thus what is left is  to compute  $\frac{d^3}{dt^3}I(\phi_j)_{t=0}$. With the notation introduced in \eqref{TQ1} we can verify that
$$
\frac{d^3}{dt^3}I_i(\phi_j)_{t=0}=-6\fint_{\mathbb{T}}\frac{A\overline{B}-\overline{A}B}{A^4}B(AC-b_iB)d\tau.
$$
Using a scaling argument as before  we find  real constants $\widehat\eta_{ij}$ such that 
\begin{eqnarray*}
\frac16\frac{d^3}{dt^3}I_i(\phi_j)_{t=0}&=&-\fint_{\mathbb{T}}\frac{B\overline{B} C}{A^2}d\tau+b_i\fint_{\mathbb{T}}\frac{B^2\overline{B}}{A^3}d\tau+\widehat\eta_{ij}\,\overline{w}^{3m+1}\\
&=&(m-1)\overline{w}^{m+1} J_{i,j}+b_i\overline{w}^{m+1} K_{i,j}+\widehat\eta_{ij} \,\overline{w}^{3m+1}
\end{eqnarray*}
with
$$
J_{i,j}=\alpha_i\fint_{\mathbb{T}}\frac{(\alpha_j-\alpha_i\overline{\tau}^{m-1})(\alpha_j-\alpha_i\tau^{m-1}) \overline{\tau}^m}{(b_j-b_i\tau)^2}
d\tau
$$
and
$$
K_{i,j}=\fint_{\mathbb{T}}\frac{(\alpha_j-\alpha_i\overline{\tau}^{m-1})^2(\alpha_j-\alpha_i\tau^{m-1})}{(b_j-b_i\tau)^3}d\tau.
$$
It is now a relatively simple matter to compute  $J_{i,j}$.  For the term $J_{1,2}$,  it is computed   by combining the notation   \eqref{Notqy1} with    residue theorem \mbox{at $\infty$} 
\begin{eqnarray}\label{Tu1}
\nonumber J_{1,2}&=&\varepsilon b\fint_{\mathbb{T}}\frac{(1-\varepsilon b\overline{\tau}^{m-1})(1-\varepsilon b\tau^{m-1}) \overline{\tau}^m}{(b-\tau)^2}
d\tau\\
&=&0.
\end{eqnarray}
With respect to   $J_{2,1}$ we write  by the definition 
\begin{eqnarray*}
J_{2,1}&=&\fint_{\mathbb{T}}\frac{1+b^2-\varepsilon b(\tau^{m-1}+\overline{\tau}^{m-1})}{(1-b\tau)^2}\overline{\tau}^md\tau\\
&=&\fint_{\mathbb{T}}\frac{(1+b^2)\overline{\tau}^m-\varepsilon b\overline{\tau}-\varepsilon b\overline{\tau}^{2m-1}}{(1-b\tau)^2}d\tau.
\end{eqnarray*}
Therefore we get with the help of  \eqref{form706}
\begin{equation}\label{Tu2}
J_{2,1}=(1+b^2) mb^{m-1}-\varepsilon b-\varepsilon (2m-1)b^{2m-1}.
\end{equation}
As to the diagonal terms $J_{i,i}$, one has
\begin{eqnarray}
\nonumber J_{i,i}&=&\frac{\alpha_i^2}{b_i^2}\fint_{\mathbb{T}}\frac{(1-\overline{\tau}^{m-1})(1-\tau^{m-1}) \overline{\tau}^m}{(1-\tau)^2}d\tau.
\end{eqnarray}
Because the integrand has a holomorphic continuation to  $\CC^\star$ , then we deduce by  residue theorem at $\infty$
\begin{equation}\label{Tu3}
J_{i,i}=0.
\end{equation}
Now we shall move to the computation of $K_{i,j}$. Using the definition and  residue theorem at $\infty$ we obtain by virtue of \eqref{Idgh1}
\begin{eqnarray*}
\nonumber K_{1,2}&=&\fint_{\mathbb{T}}\frac{(1-\varepsilon b\overline{\tau}^{m-1})^2(1-\varepsilon b\tau^{m-1}) }{(b-\tau)^3}d\tau\\
\nonumber&=&\varepsilon b\fint_{\mathbb{T}}\frac{{\tau}^{m-1} }{(\tau-b)^3}d\tau\\
&=&\frac12\varepsilon (m-1)(m-2)b^{m-2}.
\end{eqnarray*}
In regard to  $K_{2,1}$  once again  we  invoke  residue theorem  which allows to get

\begin{eqnarray}
\nonumber K_{2,1}&=&\fint_{\mathbb{T}}\frac{(\varepsilon b-\overline{\tau}^{m-1})^2(\varepsilon b-\tau^{m-1}) }{(1-b\tau)^3}d\tau\\
\nonumber&=&-(1+2b^2)\fint_{\mathbb{T}}\frac{\overline{\tau}^{m-1}}{(1-b\tau)^3}d\tau+\varepsilon b\fint_{\mathbb{T}}\frac{\overline{\tau}^{2m-2}}{(1-b\tau)^3}d\tau.
\end{eqnarray}
It follows from \eqref{Idgh1} that
\begin{equation}\label{Tu6}
K_{2,1}=(m-1)\Big[(2m-1) \varepsilon\,b^{2m-2}-mb^m-\frac12 m b^{m-2}\Big].
\end{equation}
For the term $K_{i,i}$ it suffices to use  residue theorem at $z=1$ as follows,
\begin{eqnarray*}
\nonumber K_{i,i}&=&\frac{\alpha_i^3}{b_i^3}\fint_{\mathbb{T}}\frac{(1-\overline{\tau}^{m-1})^2(1-\tau^{m-1}) }{(1-\tau)^3}d\tau\\
\nonumber&=&\frac{\alpha_i^3}{b_i^3}\fint_{C_r}\frac{3-\tau^{m-1}}{(1-\tau)^3}d\tau\\
&=&\frac12\frac{\alpha_i^3}{b_i^3}(m-1)(m-2),
\end{eqnarray*}
with $C_r$ the circle of center zero and radius $r>1.$  Consequently one deduces  by \eqref{Notqy1} 
\begin{equation*}
K_{1,1}=\frac12\varepsilon b^3(m-1)(m-2);\quad K_{2,2}=\frac12b^{-3}(m-1)(m-2).
\end{equation*}
Putting together the preceding identities yields successively,
\begin{eqnarray*}
\frac16\frac{d^3}{dt^3}I_1(\phi_1)_{t=0}=\frac12\varepsilon b^3(m-1)(m-2)\overline{w}^{m+1}+ \widehat\eta_{11}\overline{w}^{3m+1},
\end{eqnarray*}
\begin{eqnarray*}
\frac16\frac{d^3}{dt^3}I_2(\phi_1)_{t=0}=(m-1)\Big(\frac12 m b^{m-1}-\varepsilon b\Big)\overline{w}^{m+1}+ \widehat\eta_{21}\overline{w}^{3m+1},
\end{eqnarray*}
\begin{eqnarray*}
\frac16\frac{d^3}{dt^3}I_1(\phi_2)_{t=0}=\frac12\varepsilon (m-1)(m-2) b^{m-2}\overline{w}^{m+1}+ \widehat\eta_{12}\overline{w}^{3m+1},
\end{eqnarray*}
and
\begin{eqnarray*}
\frac16\frac{d^3}{dt^3}I_2(\phi_2)_{t=0}=\frac12(m-1)(m-2) b^{-2}\overline{w}^{m+1}+  \widehat\eta_{22}\overline{w}^{3m+1}.
\end{eqnarray*}
Coming back to the definition of $I(\phi_j)$ and using the foregoing identities we may find
\begin{eqnarray*}
\frac16\frac{d^3}{dt^3}I(\phi_1)_{t=0}=\frac12(m-1)\Big[\varepsilon b^3(m-2)-mb^{m-1}+2\varepsilon b\Big]\overline{w}^{m+1}+ 
(\widehat\eta_{11}-\widehat\eta_{21})\overline{w}^{3m+1},
\end{eqnarray*}
\begin{eqnarray*}
\frac16\frac{d^3}{dt^3}I(\phi_2)_{t=0}=\frac12(m-1)(m-2)\Big(\varepsilon b^{m-2}-b^{-2}\Big)\overline{w}^{m+1}+ (\widehat\eta_{12}-\widehat\eta_{22})\overline{w}^{3m+1}.
\end{eqnarray*}
Inserting these identities into \eqref{FF11} yields 
\begin{eqnarray*}
\frac{d^3}{dt^3}G_1(\lambda_m, tv_m)_{|t=0}&=&  \hbox{Im}\bigg\{w\frac{d^3}{dt^3}I(\phi_1(t,\cdot))_{|t=0}+3(1-m)\varepsilon b \overline{w}^{m-1}\frac{d^2}{dt^2}(I(\phi_1(t,\cdot))_{|t=0} )_{|t=0}\bigg\}\\
&=& \hbox{Im}\bigg\{3m(m-1)\Big(2\varepsilon b-b^{m-1}-\varepsilon b^3\Big)\overline{w}^m+\gamma_1 \overline{w}^{3m}\bigg\}\\
&=&3m(m-1) \Big(2\varepsilon b-b^{m-1}-\varepsilon b^3\Big)\,e_m+\gamma_1 \,e_{3m}.
\end{eqnarray*}
Likewise, we get for some real constant $\gamma_2$
\begin{eqnarray*}
\frac{d^3}{dt^3}G_2(\lambda_m, tv_m)_{|t=0}&=&  \hbox{Im}\bigg\{b w\frac{d^3}{dt^3}I(\phi_2(t,\cdot))_{|t=0}+3(1-m) \overline{w}^{m-1}\frac{d^2}{dt^2}(I(\phi_2(t,\cdot))_{|t=0} )_{|t=0}\bigg\}\\
&=&-3m(m-1)\Big(\varepsilon b^{m-1}-b^{-1}\Big)\,e_m+\gamma_2 \,e_{3m},
\end{eqnarray*}
with $\gamma_1,\gamma_2\in \RR.$ Writing them in the vectorial form  
$$
\frac{d^3}{dt^3}G(\lambda_m, tv_m)_{|t=0}=3m(m-1) \begin{pmatrix}
2\varepsilon b-b^{m-1}-\varepsilon b^3\\
-\varepsilon b^{m-1}+b^{-1}
\end{pmatrix} e_m+\begin{pmatrix}
\gamma_1\\
\gamma_2
\end{pmatrix} e_{3m}.
$$
By means of  \eqref{project1} we find
\begin{eqnarray*}
\frac13\frac{d^3}{dt^3}Q G(\lambda_m, tv_m)_{|t=0}&=&m(m-1)\Big\langle \begin{pmatrix}
2\varepsilon b-b^{m-1}-\varepsilon b^3\\
-\varepsilon b^{m-1}+b^{-1}
\end{pmatrix}, \widehat{\mathbb{W}}\Big\rangle \mathbb{W}_m\\
&=&\frac{1}{\sqrt{2}} m(m-1)\varepsilon\Big(2b-b^3-b^{-1}\Big)\mathbb{W}_m\\
&=&-\varepsilon\,m(m-1)\frac{(1-b^2)^2}{b\sqrt2}\mathbb{W}_m
\end{eqnarray*}
and  the proposition follows.
\end{proof}
\subsubsection{Computation of $Q\partial_{ff}G(\lambda_m,0)[v_m,\widetilde{v}_m]$}
Now we intend to establish the following identity which has been used in the Subsection \ref{SectionQ}. 
\begin{proposition}\label{lem76}
The following assertions hold true.
\begin{enumerate}
\item Case $m=2;$
$$
Q\partial_{ff}G(\lambda_m,0)[v_2,\widetilde{v}_2]=0.
$$
\item Case $m\geq3;$
\begin{equation*}
Q\partial_{ff}G(\lambda_m,0)[v_m,\widetilde{v}_m]=\frac{\widehat{\beta}_m \widehat{\gamma}_m}{\sqrt{2}}\mathbb{W}_m.
 \end{equation*}
 with
$$
\widehat{\beta}_m =\frac{2m(b^2+b^{m})^2}{ (b^m+1)^2(-b^{2m}+2b^m+1)}
$$
and 
$$\widehat{\gamma}_m=(m-2)b+mb^{2m-1}+(4m-6)b^{m+1}
+4(m-1)b^{2m+1}+2mb^{3m-1}.
$$
\end{enumerate}
\end{proposition}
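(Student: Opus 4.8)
The backbone of the argument is the identity furnished by Proposition~\ref{platit2}-(2),
$$
Q\partial_{ff}G(\lambda_m,0)[v_m,\widetilde v_m]=Q\,\partial_t\partial_s G\big(\lambda_m,\,tv_m+s\widetilde v_m\big)_{|t=s=0},
$$
which turns the bilinear expression into a genuine mixed second derivative of $G$ along the two directions $v_m$ and $\widetilde v_m$. The case $m=2$ then follows instantly: Lemma~\ref{lemexp}-(1) gives $\widetilde v_2=0$, and since $\partial_{ff}G(\lambda_2,0)$ is bilinear the whole quantity vanishes. All the remaining effort is devoted to $m\geq3$, where $\varepsilon=-1$ and $\widetilde v_m$ has components $\beta_1=\widehat\beta_m(-1-2b^m)$ and $\beta_2=\widehat\beta_m b^{2m-1}$ against $\overline w^{2m-1}$.

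Mirroring Section~\ref{Sec44}, I would encode the two perturbations in the single family $\phi_j(t,s,w)=b_jw+t\alpha_j\overline w^{m-1}+s\beta_j\overline w^{2m-1}$, with $(\alpha_1,\alpha_2)=(\varepsilon b,1)$ the components of $v_m$, and expand $G_j=\hbox{Im}\big\{[(1-\lambda_m)\overline{\phi_j}+I(\phi_j)]\,w\,\phi_j'\big\}$ by Leibniz. Because $\phi_j$ is affine in $(t,s)$ the term carrying $\partial_t\partial_s\phi_j'$ disappears, and writing $H_j=(1-\lambda_m)\overline{\phi_j}+I(\phi_j)$ one is left with
$$
\partial_t\partial_s G_j|_0=\hbox{Im}\Big\{b_jw\,\partial_t\partial_s I(\phi_j)|_0+\beta_j(1-2m)\overline w^{2m-1}\partial_tH_j|_0+\alpha_j(1-m)\overline w^{m-1}\partial_sH_j|_0\Big\}.
$$
The decisive simplification is that $Q$ keeps only the $e_m$-component, so just a few Fourier modes are needed. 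Using that $\partial_tI(\phi_j)|_0$ is supported on $\overline w^{m+1},w^{m-1}$ (formulas \eqref{merc5}--\eqref{merc45}) and, by the same scaling argument, $\partial_sI(\phi_j)|_0$ on $\overline w^{2m+1},w^{2m-1}$, one checks that the three blocks feed $e_m$ only through the $\overline w^{m+1}$ and $w^{m-1}$ coefficients of $\partial_t\partial_s I(\phi_j)|_0$, the $w^{m-1}$ coefficient of $\partial_tI(\phi_j)|_0$, and the $w^{2m-1}$ coefficient of $\partial_sI(\phi_j)|_0$; every other monomial is thrown into $e_{2m}$ or $e_{3m}$ and annihilated by $Q$.

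The computational core is the mixed integral $\partial_t\partial_s I_i(\phi_j)|_0$. Starting from $I_i(\phi_j)=\fint_{\mathbb T}\frac{\overline A+t\overline P+s\overline R}{A+tP+sR}\,(b_i+tC+sD)\,d\tau$, with $A$ as in \eqref{TQ1}, $P=\alpha_j\overline w^{m-1}-\alpha_i\overline\tau^{m-1}$, $R=\beta_j\overline w^{2m-1}-\beta_i\overline\tau^{2m-1}$ and $C,D$ the matching derivatives of $\phi_i'$, differentiating once in each variable yields a rational integrand that, by the homogeneity argument of Section~\ref{Sec44}, splits into monomials in $w$. Only the pieces $-b_i\,\overline P R/A^2+\overline P D/A$ (which scale like $\overline w^{m+1}$) and $-b_i\,\overline R P/A^2$ (scaling like $w^{m-1}$) are relevant, the rest producing $\overline w^{2m+1}$, $w^{2m-1}$ or $\overline w^{3m+1}$. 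These integrals, together with the first-order data, are then evaluated by the residue identities \eqref{form706} and \eqref{Idgh1} exactly as in the Jacobian step; the first-order coefficients should come out to $q_1=0,\ q_2=\varepsilon b^m-1$ (from $\partial_tI$) and $u_1=0,\ u_2=-2\widehat\beta_m b^{2m-1}(1+b^m)$ (from $\partial_sI$), while $1-\lambda_m=\tfrac{1-b^2}{2}$.

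Finally I would read off the scalar $e_m$-coefficient $\kappa_j$ of each $\partial_t\partial_s G_j|_0$ by combining the three blocks and the sign rule $\hbox{Im}(\overline w^m)=e_m=-\hbox{Im}(w^m)$, and then form $Q\,\partial_t\partial_s G|_0=\tfrac1{\sqrt2}(\kappa_1-\varepsilon\kappa_2)\,\mathbb W_m$ through \eqref{project1}. The main obstacle is precisely this last stage: the residue evaluation of the mixed integral, and the ensuing algebraic reduction of $\kappa_1+\kappa_2$ to the compact factorized form $\widehat\beta_m\widehat\gamma_m$ with $\widehat\gamma_m=(m-2)b+mb^{2m-1}+(4m-6)b^{m+1}+4(m-1)b^{2m+1}+2mb^{3m-1}$. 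Matching the five distinct powers of $b$ in $\widehat\gamma_m$ — the higher ones $b^{2m+1},b^{3m-1}$ arising from the integral block and the lower ones from the boundary blocks — is where all the bookkeeping concentrates, and is the delicate part of the argument.
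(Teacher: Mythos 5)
Your strategy coincides with the paper's own proof: reduce the bilinear term to a mixed derivative via Proposition~\ref{platit2}-(2), dispose of $m=2$ through $\widetilde v_2=0$ exactly as the paper does, encode both directions in $\varphi_j(t,s,w)=b_jw+t\alpha_j\overline w^{m-1}+s\beta_j\overline w^{2m-1}$, isolate the $e_m$-modes by the homogeneity argument, evaluate by residues, and project with \eqref{project1}; your Leibniz formula (with the $\partial_t\partial_s\phi_j'$ term dropped) is equivalent to \eqref{G978}, and your first-order data $q_2=\varepsilon b^m-1=-(1+b^m)$ and $u_2$ agree with the paper's computations. There is, however, a concrete error in your enumeration of the relevant blocks of the mixed integrand. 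Expanding $\partial_t\partial_sI_i(\varphi_j)|_0$ in your notation gives the integrand $-b_i\bigl(\overline PR+P\overline R\bigr)/A^2+2b_i\overline A PR/A^3+\bigl(A\overline P-\overline A P\bigr)D/A^2+\bigl(A\overline R-\overline A R\bigr)C/A^2$, and the block $\overline R C/A$ has homogeneity degree $(2m-1)-m-1+1=m-1$, i.e.\ it also scales like $w^{m-1}$ and feeds $e_m$; you keep only $-b_i\overline PR/A^2$, $\overline PD/A$ and $-b_i\overline RP/A^2$. In the paper's notation the block you dropped is
$$
I_4^{i,j}=(1-m)\,\alpha_i\fint_{\mathbb{T}}\frac{\beta_j\overline\tau^{\,m}-\beta_i\tau^{m-1}}{b_j-b_i\tau}\,d\tau,
$$
and it is in no way negligible: on the diagonal it exactly cancels the block you did keep, $-b_iI_1^{i,i}+I_4^{i,i}=0$, while off the diagonal it contributes $(m-1)b^m\beta_1$ and $(1-m)b^{m-1}\beta_1$ to the $w^{m-1}$ coefficients. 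Followed literally, your recipe yields a wrong $w^{m-1}$ coefficient in $\partial_t\partial_sI(\varphi_j)|_0$, hence wrong $\kappa_j$ and a wrong final constant.

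A second, structural shortfall is that the part you defer is essentially the whole content of the proposition. The $e_m$-coefficient of $\partial_t\partial_sG_1$ first comes out as $\widehat{\beta}_m\bigl[\bigl(m\frac{b^2-1}{2}+m-1\bigr)\bigl(b+2b^{m+1}\bigr)-(m-1)b^{3m-1}\bigr]$, and reducing it, together with the $G_2$-coefficient, to the stated five-term polynomial $\widehat\gamma_m$ requires substituting the defining relation of $b=b_m$, namely $m(1-b^2)=2+2b^m$ from \eqref{relat1}; noting $1-\lambda_m=\frac{1-b^2}{2}$ alone does not suffice, and this step is absent from your outline. So while your plan is the paper's plan and the $m=2$ case is complete, as written the $m\geq3$ argument both drops an essential term and stops short of the residue computation that actually justifies $Q\partial_{ff}G(\lambda_m,0)[v_m,\widetilde v_m]=\frac{\widehat{\beta}_m\widehat{\gamma}_m}{\sqrt2}\,\mathbb{W}_m$.
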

\begin{proof}
$(\bf{1})$ This follows at once  from the  linearity of the derivation combined with \mbox{Lemma \ref{lemexp},}
\\
$(\bf{2})$
Recall from the definition  that 
$$
Q\partial_{ff}G(\lambda_m,0)[v_m,\widetilde{v}_m]=Q\partial_t\partial_s G(\lambda_m, tv_m+s\widetilde{v}_m)_{|t=s=0}.
$$
From the explicit value  of $\widetilde{v}_m$  given in  Lemma \ref{lemexp}  we can make the split $\widetilde{v}_m=\widehat{\beta}_m\widehat{v}_m$ with
$$
\widehat{\beta}_m =\frac{2m(b^2+b^{m})^2}{ (b^m+1)^2(-b^{2m}+2b^m+1)},\quad \widehat{v}_m\triangleq \left( \begin{array}{c}
\beta_1\\
\beta_2
\end{array} \right)\overline{w}^{2m-1} = \left( \begin{array}{c}
-1-2b^m \\
b^{2m-1}
\end{array} \right)  \overline{w}^{2m-1}.
$$ 
After factoring by $\widehat{\beta}_m$,  we get 
$$
Q\partial_{ff}G(\lambda_m,0)[v_m,\widetilde{v}_m]=\widehat{\beta}_mQ\partial_t\partial_s G(\lambda_m, tv_m+s\widehat{v}_m)_{|t=s=0}.
$$
Let us recall  from \eqref{formm} and \eqref{kernexp}   that  for $m\ge 3$
$$
{v}_m=\left( \begin{array}{c}
\alpha_1 \\
\alpha_2
\end{array} \right)\overline{w}^{m-1}= \left( \begin{array}{c}
-b\\
1
\end{array} \right)\overline{w}^{m-1}.
$$
Next we shall define  the functions
$$
\varphi_j(t,s,w)\triangleq b_j w+t\alpha_j \overline{w}^{m-1}+s\beta_j \overline{w}^{2m-1}\quad \hbox{and}\quad G_j(t,s,w)\triangleq G_j(\lambda_m, tv_m+s\widehat{v}_m).
$$
Then, it is easily seen that $$
G_j(t,s,w)=\hbox{Im}\bigg\{\Big[(1-\lambda_m)\overline\varphi_j(t,s,w)+I(\varphi_j(t,s,w))\Big] w\Big(b_j+t(1-m)\alpha_j \overline{w}^m+s(1-2m)\beta_j \overline{w}^{2m}\Big)\bigg\}.
$$
Differentiating successively with respect to $t$ and $s$ according to Leibniz's rule we find
\begin{eqnarray}\label{G978}
\nonumber\partial_t\partial_sG_j(0,0,w)&=&\hbox{Im}\bigg\{(1-\lambda_m)\alpha_j\beta_j\big[(1-2m)\overline{w}^m+(1-m) w^m\big]+w b_j\partial_t\partial_s I(\varphi_j(0,0,w))\\
&+& (1-2m)\beta_j \overline{w}^{2m-1}\partial_t I(\varphi_j(0,0,w))+(1-m)\alpha_j \overline{w}^{m-1}\partial_sI(\varphi_j(0,0,w))\bigg\}.
\end{eqnarray}
We merely note that $I(\varphi_j)=I_1(\varphi_j)-I_2(\varphi_j)$ with 
\begin{eqnarray*}
I_i(\varphi_j(t,s,w))&=&\fint_{\mathbb{T}}\frac{\overline{\varphi_j(t,s,w)}-{\overline{\varphi_j(t,s,\tau)}}}{\varphi_j(t,s,w)-\varphi_j(t,s,\tau)}\varphi_i^\prime(t,s,\tau) d\tau\\
&=&\fint_{\mathbb{T}}\frac{\overline{A}+t\overline{B}+s\overline{C}}{A+tB+sC}\big[b_i+tD+sE\big]d\tau
\end{eqnarray*}
and where we adopt the notation :
\begin{eqnarray*}
A&=&b_j  w-b_i\tau,\quad B=\alpha_j \overline{w}^{m-1}-\alpha_i\overline{\tau}^{m-1}, \quad C=\beta_j \overline{w}^{2m-1}-\beta_i\overline{\tau}^{2m-1},\\
 D&=&(1-m)\alpha_i \overline{\tau}^m,\quad E=(1-2m)\beta_i \overline{\tau}^{2m}.
\end{eqnarray*}
  We shall start with calculating the term $\partial_t I_i(\varphi_j(0,0,w))$. Then plain computations lead to 
$$
\partial_t I_i(\varphi_j(0,0,w))=b_i\fint_{\mathbb{T}}\frac{A \overline{B}-\overline{A} B}{A^2} 
d\tau+\fint_{\mathbb{T}}\frac{\overline{A} D}{A} d\tau.
$$
As it was emphasized  before, because we use  a projection  $Q$ which captures  only the terms on $e_m$ then we do not need to know exactly all the terms but only some of them. This elementary fact will be often used in this part. Now by a scaling argument it comes that
$$
\partial_t I_i(\varphi_j(0,0,w))=b_i\fint_{\mathbb{T}}\frac{ \overline{B}}{A}d\tau+\theta_{i,j}  \overline{w}^{m+1},
$$
with $\theta_{i,j}\in \RR.$ Therefore coming back to the notation we may write
$$
\partial_t I_i(\varphi_j(0,0,w))=b_i w^{m-1}\fint_{\mathbb{T}}\frac{\alpha_j-\alpha_i\tau^{m-1}}{b_j-b_i\tau}d\tau +\theta_{i,j}
\overline{w}^{m+1}.
$$
As an immediate consequence we obtain successively by means of residue theorem,
$$
\partial_t I_i(\varphi_i(0,0,w))= \theta_{i,i} \overline{w}^{m+1},
$$
\begin{eqnarray*}
\partial_t I_1(\varphi_2(0,0,w))&=& w^{m-1}\fint_{\mathbb{T}}\frac{1+b\tau^{m-1}}{b-\tau}d\tau + \theta_{1,2} \overline{w}^{m+1}\\
&=&-w^{m-1}\big[1+b^m\big]+\theta_{1,2} \overline{w}^{m+1}
\end{eqnarray*} 
and 
\begin{eqnarray*}
\partial_t I_2(\varphi_1(0,0,w))&=& bw^{m-1}\fint_{\mathbb{T}}\frac{-b-\tau^{m-1}}{1-b\tau}d\tau+\theta_{2,1} \overline{w}^{m+1}\\
&=&\theta_{2,1} \overline{w}^{m+1}.
\end{eqnarray*}
As concerns the term  $\partial_sI_i(\varphi_j(0,0,w)),$ it is easy to check  by elementary  calculation that
$$
\partial_s I_i(\varphi_j(0,0,w))=b_i\fint_{\mathbb{T}}\frac{A \overline{C}-\overline{A} C}{A^2}
d\tau+\fint_{\mathbb{T}}\frac{\overline{A} E}{A} d\tau.
$$
As before, using  a  scaling argument  one finds real coefficients $\widehat\theta_{i,j}\in \RR$ such that
\begin{eqnarray*}
\partial_s I_i(\varphi_j(0,0,w))&=&b_i\fint_{\mathbb{T}}\frac{ \overline{C}}{A}d\tau+\widehat\theta_{i,j} \overline{w}^{2m+1}\\
&=&b_iw^{2m-1}\fint_{\mathbb{T}}\frac{\beta_j-\beta_i\tau^{2m-1}}{b_j-b_i\tau}d\tau+\widehat\theta_{i,j}  \overline{w}^{2m+1}.
\end{eqnarray*}
This implies in particular the vanishing of the diagonal terms in the integral, namely,
$$
\partial_s I_i(\varphi_i(0,0,w))=\widehat\theta_{i,i}  \overline{w}^{2m+1}.
$$
Applying once again  residue theorem we get
\begin{eqnarray*}
\partial_s I_2(\varphi_1(0,0,w))&=&bw^{2m-1}\fint_{\mathbb{T}}\frac{\beta_1-\beta_2\tau^{2m-1}}{1-b\tau}d\tau+\widehat\theta_{2,1}  \overline{w}^{2m+1}\\
&=&\widehat\theta_{2,1}  \overline{w}^{2m+1}.
\end{eqnarray*}
Repeating the same procedure we may write
\begin{eqnarray*}
\partial_s I_1(\varphi_2(0,0,w))&=&w^{2m-1}\fint_{\mathbb{T}}\frac{\beta_1\tau^{2m-1}-\beta_2}{\tau-b}d\tau+\widehat\theta_{1,2}\overline{w}^{2m+1}\\
&=&[\beta_1 b^{2m-1}-\beta_2]w^{2m-1}+\widehat\theta_{1,2} \overline{w}^{2m+1}\\
&=&-2b^{2m-1}\big[1+b^m\big]w^{2m-1}+\widehat\theta_{1,2}  \overline{w}^{2m+1}.
\end{eqnarray*}
Now, combining the preceding identities we find
\begin{eqnarray*}
\partial_t I(\varphi_1(0,0,w))&=&(\theta_{1,1}-\theta_{2,1}) \overline{w}^{m+1},\\
\partial_s I(\varphi_1(0,0,w))&=&\big(\widehat\theta_{1,1}-\widehat\theta_{2,1}\big) \overline{w}^{2m+1}
\end{eqnarray*}
and
\begin{eqnarray*}
 \partial_t I(\varphi_2(0,0,w))&=&-(1+b^m) w^{m-1}+(\theta_{1,2}-\theta_{2,2})\overline{w}^{m+1},\\
\partial_s I(\varphi_2(0,0,w))&=&-2b^{2m-1}(1+b^m) w^{2m-1}+\big(\widehat\theta_{1,2}-\widehat\theta_{2,2} \big)\overline{w}^{2m+1}.
\end{eqnarray*}
The next step is  to compute the mixed derivative $\partial_s\partial_t I_i(\phi_j(0,0))$ which is slightly more technical. Straightforward computations yield
\begin{eqnarray*}
\partial_t\partial_s I_i(\varphi_j(0,0))&=&b_i\fint_{\mathbb{T}}\frac{B \overline{C}-\overline{B} C}{A^2} d\tau-2b_i\fint_{\mathbb{T}}\frac{A \overline{C}-\overline{A} C}{A^3}B d\tau\\
&+&\fint_{\mathbb{T}}\frac{A \overline{B}-\overline{A} B}{A^2} Ed\tau+\fint_{\mathbb{T}}\frac{A \overline{C}-\overline{A} C}{A^2} Dd\tau.
\end{eqnarray*}
Performing  a scaling argument we can show the existence of real constants $\zeta_{i,j}$ such that 
\begin{eqnarray}\label{sec47}
\nonumber\partial_t\partial_s I_i(\varphi_j(0,0))&=&\zeta_{i,j}\overline{w}^{3m+1}-b_i\fint_{\mathbb{T}}\frac{B \overline{C}}{A^2} d\tau-b_i\fint_{\mathbb{T}}\frac{C\overline{B}}{A^2} d\tau\\
\nonumber &+&\fint_{\mathbb{T}}\frac{ \overline{B}E}{A} d\tau+\fint_{\mathbb{T}}\frac{D \overline{C}}{A} d\tau\\
&\triangleq&\zeta_{i,j}\overline{w}^{3m+1}-b_i w^{m-1}I_1^{i,j}-b_i \overline{w}^{m+1}I_2^{i,j}+\overline{w}^{m+1}I_3^{i,j}+w^{m-1}I_4^{i,j}.
\end{eqnarray}
To evaluate   $I_1^{i,j}$ we write by the definition
$$
I_1^{i,j}=\fint_{\mathbb{T}}\frac{\big(\alpha_j-\alpha_i\overline{\tau}^{m-1}\big)\big(\beta_j-\beta_i{\tau}^{2m-1}\big)}{(b_j-b_i\tau)^2}d\tau.
$$
For  $i=j$, we use successively  residue theorem at $\infty$ and $z=1$ leading for any   $r>1$ to 
\begin{eqnarray*}
I_1^{i,i}&=&\frac{\alpha_i\beta_i}{b_i^2}\Big[-\fint_{C_r}\frac{\tau^{2m-1}}{(1-\tau)^2}d\tau+\fint_{C_r}\frac{\tau^{m}}{(1-\tau)^2}d\tau\Big]\\
&=&\frac{\alpha_i\beta_i}{b_i^2}\big(-2m+1+m\big)\\
&=&\frac{\alpha_i\beta_i}{b_i^2}(1-m),
\end{eqnarray*}
with  $C_r=r\mathbb{T}$.
Concerning the  case $i=1, j=2$ we combine residue theorem at $\infty$ with \eqref{form706}
\begin{eqnarray*}
I_1^{1,2}&=&\fint_{\mathbb{T}}\frac{\big(\alpha_2-\alpha_1\overline{\tau}^{m-1}\big)\big(\beta_2-\beta_1{\tau}^{2m-1}\big)}{(b-\tau)^2}d\tau\\
&=&-\alpha_2\beta_1\fint_{\mathbb{T}}\frac{\tau^{2m-1}}{(\tau-b)^2}d\tau+\alpha_1\beta_1\fint_{\mathbb{T}}\frac{\tau^{m}}{(\tau-b)^2}d\tau\\
&=&-\alpha_2\beta_1(2m-1) b^{2m-2}+\alpha_1\beta_1 mb^{m-1}\\
&=&-\beta_1\big[(2m-1) b^{2m-2}+mb^{m}\big].
\end{eqnarray*}
As to the   case $i=2, j=1$ we use  \eqref{form706} 
\begin{eqnarray*}
I_1^{2,1}&=&\fint_{\mathbb{T}}\frac{\big(\alpha_1-\alpha_2\overline{\tau}^{m-1}\big)\big(\beta_1-\beta_2{\tau}^{2m-1}\big)}{(1-b\tau)^2}d\tau\\
&=&-\alpha_2\beta_1\fint_{\mathbb{T}}\frac{\overline{\tau}^{m-1}}{(1-b\tau)^2}d\tau\\
&=&-\beta_1(m-1) b^{m-2}.
\end{eqnarray*}
Now we move to the term  $I_2^{i,j}$ and  use first the  change of variables $\tau\mapsto \overline\tau$
\begin{eqnarray*}
I_2^{i,j}&=&\fint_{\mathbb{T}}\frac{\big(\alpha_j-\alpha_i{\tau}^{m-1}\big)\big(\beta_j-\beta_i\overline{\tau}^{2m-1}\big)}{(b_j-b_i\tau)^2}d\tau\\
&=&
\fint_{\mathbb{T}}\frac{\big(\alpha_j-\alpha_i\overline{\tau}^{m-1}\big)\big(\beta_j-\beta_i{\tau}^{2m-1}\big)}{(b_i-b_j\tau)^2}d\tau, 
\end{eqnarray*}
Thus similarly to $I_{1}^{2,1}$ and  $I_{1}^{1,2}$ we get
\begin{eqnarray*}
I_2^{1,2}&=&\fint_{\mathbb{T}}\frac{\big(\alpha_2-\alpha_1\overline{\tau}^{m-1}\big)\big(\beta_2-\beta_1{\tau}^{2m-1}\big)}{(1-b\tau)^2}d\tau\\
&=&-\alpha_1\beta_2\fint_{\mathbb{T}}\frac{\overline{\tau}^{m-1}}{(1-b\tau)^2}d\tau\\
&=&\beta_2(m-1) b^{m-1}.
\end{eqnarray*}
By the same way we deduce that
\begin{eqnarray*}
I_2^{2,1}&=&\fint_{\mathbb{T}}\frac{\big(\alpha_1-\alpha_2\overline{\tau}^{m-1}\big)\big(\beta_1-\beta_2{\tau}^{2m-1}\big)}{(b-\tau)^2}d\tau\\
&=&-\alpha_1\beta_2(2m-1) b^{2m-2}+\alpha_2\beta_2 mb^{m-1}\\
&=&\beta_2\big[(2m-1) b^{2m-1}+mb^{m-1}\big].
\end{eqnarray*}
As regards  the diagonal case $i=j$, one can readily  write
\begin{eqnarray*}
I_2^{i,i}&=&I_1^{i,i}\\
&=&\frac{\alpha_i\beta_i}{b_i^2}(1-m).
\end{eqnarray*}
Let us now move to  the computation of the term $I_3^{i,j}$ which is given by
$$
I_3^{i,j}=(1-2m)\beta_i\fint_{\mathbb{T}}\frac{\alpha_j\overline{\tau}^{2m}-\alpha_i\overline{\tau}^{m+1}}{b_j-b_i\tau}d\tau
$$
First notice that from residue theorem
$$
I_3^{i,i}=0=I_3^{1,2}.
$$
To calculate the value of $I_3^{2,1}$ we shall use  \eqref{form76}
\begin{eqnarray*}
I_3^{2,1}&=&(1-2m)\beta_2\fint_{\mathbb{T}}\frac{\alpha_1\overline{\tau}^{2m}-\alpha_2\overline{\tau}^{m+1}}{1-b\tau}d\tau\\
&=&(1-2m)\beta_2\Big[\alpha_1 b^{2m-1}-\alpha_2 b^{m}\Big]\\
&=&(2m-1)\beta_2\big[b^{2m}+ b^{m}\big]\
\end{eqnarray*}
The last term that we shall deal with is  $I_4^{i,j}$ whose expression  is  given by
$$
I_4^{i,j}=(1-m)\alpha_i\fint_{\mathbb{T}}\frac{\beta_j\overline{\tau}^{m}-\beta_i{\tau}^{m-1}}{b_j-b_i\tau}d\tau.
$$
For  the diagonal terms $i=j$, we get for $r>1$ and $m\in \NN^\star$
\begin{eqnarray*}
I_4^{i,i}&=&(1-m)\frac{\alpha_i\beta_i}{b_i}\fint_{\mathbb{T}}\frac{\overline{\tau}^{m}-{\tau}^{m-1}}{1-\tau}d\tau\\
&=&(1-m)\frac{\alpha_i\beta_i}{b_i}\fint_{C_r}\frac{\overline{\tau}^{m}-{\tau}^{m-1}}{1-\tau}d\tau\\
&=&(1-m)\frac{\alpha_i\beta_i}{b_i}\fint_{C_r}\frac{{\tau}^{m-1}}{\tau-1}d\tau\\
&=&(1-m)\frac{\alpha_i\beta_i}{b_i}\cdot
\end{eqnarray*}
For the case   $i=1,j=2$, we write
\begin{eqnarray*}
I_4^{1,2}&=&(1-m)\alpha_1\fint_{\mathbb{T}}\frac{\beta_2\overline{\tau}^{m}-\beta_1{\tau}^{m-1}}{b-\tau}d\tau\\
&=&(1-m)\alpha_1\beta_1\fint_{\mathbb{T}}\frac{{\tau}^{m-1}}{\tau-b}d\tau\\
&=&(m-1)b^m\beta_1
\end{eqnarray*}
The last term  $I_4^{2,1}$ can be computed with the help of  \eqref{form76}
\begin{eqnarray*}
I_4^{2,1}&=&(1-m)\alpha_2\fint_{\mathbb{T}}\frac{\beta_1\overline{\tau}^{m}-\beta_2{\tau}^{m-1}}{1-b\tau}d\tau\\
&=&(1-m)\alpha_2\beta_1\fint_{\mathbb{T}}\frac{\overline{\tau}^{m}}{1-b\tau}d\tau\\
&=&(1-m)b^{m-1}\beta_1.
\end{eqnarray*}
Now let us gather the preceding identities in order to evaluate  $\partial_t\partial_s I_i(\varphi_j(0,0,w))$ which is defined in \eqref{sec47},
\begin{eqnarray*}
 \partial_t\partial_s I_i(\varphi_i(0,0,w))&=&\zeta_{i,i}\,\, \overline{w}^{3m+1}+w^{m-1}\big(-b_i I_1^{i,i}+I_4^{i,i}\big)+\overline{w}^{m+1}\big(-b_i I_2^{i,i}+I_3^{i,i}\big)\\
 &=&\zeta_{i,i}\,\, \overline{w}^{3m+1}+\frac{\alpha_i \beta_i}{b_i} (m-1)\overline{w}^{m+1}.
 \end{eqnarray*}
 Thus we find
 $$
  \partial_t\partial_s I_1(\varphi_1(0,0,w))=(m-1)b(1+2b^m) \overline{w}^{m+1}+ \zeta_{1,1}\,\, \overline{w}^{3m+1}
  $$
and
  $$
  \partial_t\partial_s I_2(\varphi_2(0,0,w))=b^{2m-2}(m-1) \overline{w}^{m+1}+\zeta_{2,2}\,\, \overline{w}^{3m+1}.
 $$
Furthermore, we obtain
\begin{eqnarray*}
 \partial_t\partial_s I_1(\varphi_2(0,0,w))&=&w^{m-1}\big(- I_1^{1,2}+I_4^{1,2}\big)+\overline{w}^{m+1}\big(- I_2^{1,2}+I_3^{1,2}\big)+\zeta_{1,2}\,\, \overline{w}^{3m+1}\\
 &=&\beta_1(2m-1)(b^m+b^{2m-2})w^{m-1}+\beta_2(1-m) b^{m-1}\overline{w}^{m+1}+\zeta_{1,2}\,\, \overline{w}^{3m+1}\\
 &=&(1-2m)(1+2b^m)(b^m+b^{2m-2})w^{m-1}+b^{3m-2}(1-m)\overline{w}^{m+1}+\zeta_{1,2}\,\, \overline{w}^{3m+1} \end{eqnarray*}
 and 
 \begin{eqnarray*}
 \partial_t\partial_s I_2(\varphi_1(0,0,w))&=&w^{m-1}\big(- bI_1^{2,1}+I_4^{2,1}\big)+\overline{w}^{m+1}\big(-b I_2^{2,1}+I_3^{2,1}\big)+\zeta_{2,1}\,\, \overline{w}^{3m+1}
\\
 &=&\beta_2(m-1) b^m\overline{w}^{m+1}+\zeta_{2,1}\,\, \overline{w}^{3m+1}
\\
  &=&(m-1) b^{3m-1}\overline{w}^{m+1}+\zeta_{2,1}\,\, \overline{w}^{3m+1}.
 \end{eqnarray*}
Consequently,
 \begin{eqnarray*}
 \partial_t\partial_s I(\varphi_1(0,0,w))&=&\partial_t\partial_s I_1(\varphi_1(0,0))-\partial_t\partial_s I_2(\varphi_1(0,0))\\
 &=&(m-1)\big(b+2b^{m+1}-b^{3m-1}\big)\overline{w}^{m+1}+(\zeta_{1,1}-\zeta_{2,1})\overline{w}^{3m+1}
 \end{eqnarray*}
 and
  \begin{eqnarray*}
 \partial_t\partial_s I(\varphi_2(0,0,w))&=&\partial_t\partial_s I_1(\varphi_2(0,0))-\partial_t\partial_s I_2(\varphi_2(0,0))\\
 &=&(1-2m)(1+2b^m)(b^m+b^{2m-2}) w^{m-1}+(1-m)\big(b^{3m-2}+b^{2m-2}\big)\overline{w}^{m+1}\\
 &+&(\zeta_{1,2}-\zeta_{2,2})\overline{w}^{3m+1}.
 \end{eqnarray*}
 Inserting the preceding identities into \eqref{G978} we find real constant $\widehat\zeta_1$ such that
  \begin{eqnarray*}
 \partial_t\partial_s G_1(0,0,w)&=&\widehat{\beta}_m\, \hbox{Im}\Big\{\overline{w}^m\, \Big((m-1)(b+2b^{m+1}-b^{3m-1})+(1-\lambda_m)(1-2m)(b+2b^{m+1})\Big)\\
 &+&w^m(1-m)(1-\lambda_m)(b+2b^{m+1})\Big\}+\widehat\zeta_{1}\, e_{3m}
 \end{eqnarray*}
 Since $\lambda_m=\frac{1+b^2}{2}$ then the preceding identity can be written in the form
  \begin{eqnarray*}
 \partial_t\partial_s G_1(0,0,w)&=\widehat{\beta}_m\Big[\big(m\frac{b^2-1}{2}+m-1
\big)\big(b+2b^{m+1}\big)-(m-1)b^{3m-1}\Big]e_m+\widehat\zeta_{1}\, e_{3m},
 \end{eqnarray*}
 with 
 $$
\widehat{\beta}_m =\frac{2m(b^2+b^{m})^2}{ (b^m+1)^2(-b^{2m}+2b^m+1)}\cdot
$$
 From the identity $m(1-b^2)=2+2b^m$ given in \eqref{relat1} we deduce that
\begin{eqnarray*}
 \partial_t\partial_s G_1(0,0,w)&=&\widehat{\beta}_m\Big[(b+2b^{m+1})(m-2-b^m)-(m-1)b^{3m-1}\Big]e_m+\widehat\zeta_{1}\, e_{3m}\\ &=&
\widehat{\beta}_m\Big[-(m-1)b^{3m-1}+(m-2)b-2b^{2m+1}+(2m-5)b^{m+1}\Big]+\widehat\zeta_{1}\, e_{3m}.
 \end{eqnarray*}
By the same way we can easily check that
 \begin{eqnarray*}
 \partial_t\partial_s G_2(0,0,w)&=&\widehat{\beta}_m\Big[(2m-1)b^{m+1}+mb^{2m-1}+(4m-2)b^{2m+1}+(3m-1) b^{3m-1}\Big] e_m\\
 &+&\widehat\zeta_{2}\, e_{3m},
  \end{eqnarray*}
with $\widehat\zeta_{2} \in \RR$. Using \eqref{project1} we obtain 
  \begin{equation*}
Q\partial_{ff}G(\lambda_m,0)[v_m,\widetilde{v}_m]=\frac{\widehat{\beta}_m}{\sqrt{2}}\Big\{(m-2)b+mb^{2m-1}+(4m-6)b^{m+1}
+4(m-1)b^{2m+1}+2mb^{3m-1}\Big\}\mathbb{W}_m.
 \end{equation*}
 This achieves the proof of the desired identity.
\end{proof}
\section{Proof of the Main theorem}\label{Sec55}
This last section is devoted to the proof of our main result. \begin{proof} 
$(\bf{1})$
As we have seen in Section \ref{Sec2} the existence of m-fold  V-states  for $\lambda$ close enough to $\lambda_m$ reduces to solving the bifurcation equation \eqref{Bifur1} which is a   two-dimensional equation 
\begin{equation}\label{dfs1}
F_2(\lambda,t)=0,\quad (\lambda,t)\quad \hbox{close to}\quad (\lambda_m,0)
\end{equation}
Now we shall check the existence of nontrivial solutions to this equation for $m=2$ \mbox{and $b\notin\big\{b_{2n}, n\geq2\big\}$. }
Since $F_2$ is smooth then by Taylor expansion around the point $(\lambda_2,0)$ we get

\begin{eqnarray*}
F_2(\lambda,t)=\frac12(\lambda-\lambda_2)^2\partial_{\lambda\lambda} F_2(\lambda_2,0)+\frac12t^2\partial_{tt}F_2(\lambda_2,0)+\big((\lambda-\lambda_2)^2+t^2\big)\epsilon(\lambda, t)
\end{eqnarray*}
with
$$
\lim_{(\lambda,t)\to(\lambda_2,  0)}\epsilon(\lambda, t)=0.
$$
From  \mbox{Proposition \ref{explicitval}}  we deduce that
\begin{eqnarray*}
F_2(\lambda,t)=\Big[2\frac{\sqrt{2}}{b}(\lambda-\lambda_2)^2-\frac{\sqrt{2}(1-b^2)^2}{b}t^2\Big]\,\mathbb{W}_2+\big((\lambda-\lambda_2)^2+t^2\big)\epsilon(\lambda, t).
\end{eqnarray*}
Without loss of generality  we can assume that $\lambda_2=0$; it suffices to make dilation and translation, different in any variable.   It follows at once that  $F_2$ could be written in the form
\begin{equation*}
F_2(\lambda,t)=(\lambda^2-t^2)\,\mathbb{W}_2+\big(\lambda^2+t^2\big)\epsilon(\lambda, t).
\end{equation*}
Introducing the new variable $s$ such that  $\lambda= ts$ this equation  becomes
\begin{equation*}
\widehat F_2(t,s)\triangleq(s^2-1)\,\mathbb{W}_2+\big(s^2+1\big)\epsilon(ts, t)=0.
\end{equation*}
It is plain that
$$
\widehat F_2(0,1)=0.
$$
Moreover, by the definition and the fact $\epsilon(0,0)=0$ we easily find
\begin{eqnarray*}
\partial_s\widehat F_2(0,1)&=&\lim_{s\to 1}\frac{\widehat F_2(0,s)-\widehat F_2(0,1)}{s-1}\\
&=&2\,\mathbb{W}_2\neq0.
\end{eqnarray*}
Therefore we conclude by applying the implicit function theorem that near the point  $(0,1)$ the solutions of $\widehat F_2(t,s)=0$ can be parametrized by a simple curve of the form
$t\in ]-\alpha,\alpha[\mapsto (t, \mu(t))$ \mbox{with $\alpha>0$} is a small real number and $\mu(0)=1$. This guarantees the existence of a curve of nontrivial solution to \eqref{dfs1}. 
\\
We merely observe that by setting $\lambda= -ts$ we can repeat the preceding arguments and  show the existence of nontrivial curve of solutions to $\widehat F_2(t,s)=0$. Therefore the set  of solutions for  the equation \eqref{dfs1}   is described close to $(\lambda_2,0)$ by the union of two simple curves passing through this point. However from geometrical point of view these curves describe the same domains. Indeed,  let $D=D_1\backslash D_2$ be a two-fold doubly-connected V-state whose boundary $\Gamma_1\cup\Gamma_2$ is parametrized by the conformal mappings
$$
\phi_j(w)=w\Big(b_j+\sum_{n\geq 1}\frac{a_{j,n}}{w^{2n}}\Big),\quad  a_{j,n}\in \RR.
$$
Now denote by $\widehat D= iD$ the  rotation  of $D$ with the  angle $\frac\pi2$. Then the new domain   is also a V-state with two-fold structure and its conformal parametrization, which belongs to the affine space  $(b_1,b_2)\hbox{Id}+X_2$ ( recall that the space $X_m$ was introduced in \eqref{X_m}), is given by
$$
\widehat \phi_j(w)=\frac{1}{i}\phi_j(iw)=w\Big(b_j+\sum_{n\geq 1}(-1)^n\frac{a_{j,n}}{w^{2n}}\Big),\quad  a_{j,n}\in \RR.
$$
Thus from analytical point of view if $(\phi_1,\phi_2)$ is a nontrivial solution of the V-state problem then $(\widehat \phi_1,\widehat \phi_2)$ is a solution too and it  is different from the preceding one and rotating with the same angular velocity. This implies that in  the bifurcation diagram, if $(\phi_1,\phi_2)$  belongs to a given curve of bifurcation among those constructed  before (note that each curves  is transcritical) then $(\widehat\phi_1,\widehat\phi_2)$ belongs necessary to the second one. Otherwise, we should get four curves of bifurcation and not two. This gives the correspondence between the bifurcating curves which in fact describe the same geometry.\\ Finally, we point out that  $\lambda$ is parametrized as follows $\lambda(t)=t\mu(t)$ with $ t\in ]-\alpha,\alpha[$ and $\mu(0)=1.$ Therefore the range of $\lambda$ contains strictly zero which implies that the bifurcation is transcritical. 
\\
\\
$(\bf{2})$ Let $m\geq3$ and $b=b_m$ then using  Taylor expansion around the point $(\lambda_m,0)$ combined with \mbox{Proposition \ref{explicitval}} we get
\begin{eqnarray*}
F_2(\lambda,t)&=&\frac12(\lambda-\lambda_m)^2\,\partial_{\lambda\lambda} F_2(\lambda_m,0) +\frac12 t^2\,\partial_{tt}F_2(\lambda_m,0) +\big((\lambda-\lambda_m)^2+t^2\big)\epsilon(\lambda, t)\\
&=&\frac{1}{\sqrt{2}}\Big( m^2 b^{1-m}(\lambda-\lambda_m)^2+\frac{\widehat\alpha_m}{\sqrt{2}}\, t^2\Big)\mathbb{W}_m+\big((\lambda-\lambda_m)^2+t^2\big)\epsilon(\lambda, t)
\end{eqnarray*}
with
$$
\lim_{(\lambda,t)\to(\lambda_m,  0)}\epsilon(\lambda, t)=0.
$$
As $\widehat\alpha_m>0$ then the quadratic form associated to $F_2$ is definite positive meaning that $F_2$ is strictly convex around $(\lambda_m,0)$ and therefore in this neighborhood the equation \eqref{dfs1} has no other roots than the trivial one  $(\lambda_m,0).$ Consequently we deduce that for $\lambda$ close to $\lambda_m$ there is no  $m-$fold V-states which bifurcating from  the annulus $\mathbb{A}_b.$ This achieves  the proof of the Main theorem.
 
 \end{proof}
 
\begin{ackname}
We are indebted  to F. de la Hoz for the numerical experiments discussed in the Introduction, especially Figure \ref{flambda00}, Figure \ref{flambda1} and Figure \ref{fbvs}.  Taoufik Hmidi  was partially supported by the ANR project Dyficolti ANR-13-BS01-0003- 01. Joan Mateu  was partially supported by the grants of Generalitat de Catalunya 2014SGR7,
Ministerio de Economia y Competitividad MTM 2013-4469,
ECPP7- Marie Curie project MAnET. \end{ackname}

\end{document}